  \definecolor{verydarkblue}{rgb}{0,0,0.5}
\newtheorem{theorem}{Theorem}[section]		
\newtheorem{lemma}[theorem]{Lemma}
\newtheorem{prop}[theorem]{Proposition}
\newtheorem*{defi*}{Definition}			
\newtheorem*{bei*}{Example}
\newtheorem*{sat*}{Theorem}				
\newtheorem*{kor*}{Corollary}
\newtheorem*{rmk*}{Remark}				
\newtheorem*{quest*}{Question}	
\newtheorem{claim}{Claim}	
\let\ssection=\section
\renewcommand{\section}{\setcounter{equation}{0}\ssection}
\newtheorem*{namedtheorem}{\theoremname}
\newcommand{\theoremname}{testing}
\newenvironment{named}[1]{\renewcommand{\theoremname}{#1}\begin{namedtheorem}}{\end{namedtheorem}}
\theoremstyle{remark}
\newtheorem*{bem}{Remark}
\newtheorem*{namedtheoremr}{\theoremnamer}
\newcommand{\theoremnamer}{testing}
			\newcommand{\BH}{\mathbb H}
\newcommand{\BR}{\mathbb R}			
\newcommand{\BS}{\mathbb S}			\newcommand{\BZ}{\mathbb Z}
\newcommand{\BP}{\mathbb P}
\newcommand{\CC}{\mathcal C}		
\newcommand{\CE}{\mathcal E}		\newcommand{\CF}{\mathcal F}
\newcommand{\CG}{\mathcal G}		
		\newcommand{\CL}{\mathcal L}
\newcommand{\CM}{\mathcal M}		\newcommand{\CN}{\mathcal N}
		\newcommand{\CP}{\mathcal P}
\newcommand{\CQ}{\mathcal Q}		
\newcommand{\CS}{\mathcal S}		\newcommand{\CT}{\mathcal T}
\newcommand{\CU}{\mathcal U}
\newcommand{\FM}{\mathfrak m}
\newcommand{\FB}{\mathfrak b}
\newcommand{\FC}{\mathfrak c}
\newcommand{\actson}{\curvearrowright}
\newcommand{\D}{\partial}
\DeclareMathOperator{\Diff}{Diff}	%	Diffeomorphimen einer Mf
\DeclareMathOperator{\PSL}{PSL}		%	Spezielle lineare Gruppe
\DeclareMathOperator{\Id}{Id}		%	Identit\"at
\DeclareMathOperator{\Isom}{Isom}	%	Isometrien einer Mf
\DeclareMathOperator{\Map}{Map}
\DeclareMathOperator{\PMap}{PMap}
\DeclareMathOperator{\Ker}{Ker}
\newcommand{\comment}[1]{}
\DeclareMathOperator{\Stab}{Stab}
\DeclareMathOperator{\Homeo}{Homeo}
\DeclareMathOperator{\hyp}{hyp}
\DeclareMathOperator{\Thu}{Thu}
\DeclareMathOperator{\sing}{sing}
\DeclareMathOperator{\orb}{or}
\DeclareMathOperator{\RO}{O}
\begin{document}

\title[]{Counting curves on orbifolds}
  \author{Viveka Erlandsson}
  \address{School of Mathematics, University of Bristol \\ Bristol BS8 1UG, UK {\rm and}  \newline ${ }$ \hspace{0.2cm} Department of Mathematics and Statistics, UiT The Arctic University of Norway}
  \email{v.erlandsson@bristol.ac.uk}
\thanks{The first author gratefully acknowledges support from EPSRC grant EP/T015926/1.}
\author{Juan Souto}
\address{UNIV RENNES, CNRS, IRMAR - UMR 6625, F-35000 RENNES, FRANCE}
\email{jsoutoc@gmail.com}

\begin{abstract}
We show that Mirzakhani's curve counting theorem also holds if we replace surfaces by orbifolds.  
\end{abstract}
\maketitle

\section{Introduction}

Throughout this paper we let $\Gamma\subset\PSL_2\BR$ be a non-elementary finitely generated discrete subgroup of the group of orientation preserving isometries of the hyperbolic plane $\BH^2$. Suppose for a moment that $\Gamma$ is torsion free, let $S=\BH^2/\Gamma$ be the associated hyperbolic surface, and let $\CS(S)$ be the set of free homotopy classes of closed unoriented primitive essential curves therein. Here essential just means that the given homotopy class of curves is neither trivial nor peripheral. The mapping class group $\Map(S)$ of $S$ acts on $\CS(S)$ and we say that two elements in the same orbit are {\em of the same type}. Mirzakhani studied the asymptotic behavior, when $L\to\infty$, of the number of elements in $\CS(S)$ of some fixed type $\gamma_0$ and with at most length $L$. More concretely, she proved in \cite{Maryam0,Maryam1,Maryam2} that the limit
\begin{equation}\label{eq-maryam}
\lim_{L\to\infty}\frac 1{L^{6g-6+2r}}\vert\{\gamma\text{ of type }\gamma_0\text{ with }\ell_S(\gamma)\le L\}\vert
\end{equation}
exists and is positive for every $\gamma_0\in\CS(S)$. Here, $g$ is the genus of $S$, $r$ is the number of ends, and $\ell_S(\gamma)$ is the length of the hyperbolic geodesic in the homotopy class $\gamma$. 

The goal of this note is to prove that this statement remains true when $\Gamma$ has torsion, that is when $\RO=\BH^2/\Gamma$ is an orbifold instead of a surface. 

\begin{theorem}\label{sat1}
Let $\Gamma\subset\PSL_2\BR$ be a non-elementary finitely generated discrete subgroup and $\RO=\BH^2/\Gamma$ the associated 2-dimensional hyperbolic orbifold. Then the limit
$$\lim_{L\to\infty}\frac 1{L^{6g-6+2r}}\vert\{\gamma\text{ of type }\gamma_0\text{ with }\ell_{\RO}(\gamma)\le L\}\vert$$
exists and is positive for any $\gamma_0\in\CS^{\orb}(\RO)$. Here $g$ is the genus of the orbifold $\RO$ and $r$ is the sum of the numbers of singular points and ends.
\end{theorem}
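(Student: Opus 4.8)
The plan is to pass to a torsion-free finite-index subgroup, transfer the problem to a genuine surface carrying a finite symmetry group, and then run the geodesic-currents proof of Mirzakhani's theorem equivariantly. By Selberg's lemma $\Gamma$ contains a torsion-free normal subgroup $\Gamma'$ of finite index; set $S=\BH^2/\Gamma'$ and $G=\Gamma/\Gamma'$, a finite group acting on the hyperbolic surface $S$ by isometries with $S/G=\RO$. Then $G$ acts compatibly on Teichm\"uller space $\CT(S)$, on the space $\mathcal{ML}(S)$ of measured laminations, and on the space $\mathcal{C}(S)$ of geodesic currents, all of which are spaces of $\Gamma'$-invariant Radon measures on the space of unoriented geodesics of $\BH^2$ with $G$ acting through $\Gamma$. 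One identifies the orbifold objects with the $G$-fixed subspaces, $\mathcal{C}(\RO)\cong\mathcal{C}(S)^G$ and $\mathcal{ML}(\RO)\cong\mathcal{ML}(S)^G$, and a curve of type $\gamma_0$ on $\RO$ with a $G$-invariant geodesic multicurve on $S$, the orbifold length $\ell_\RO$ corresponding to a fixed multiple of the $S$-length of any one component. Curves running through order-two cone points are harmless: the corresponding geodesics on $S$ pass through the (regular) fixed points of the involutions in $G$ and are preserved, as unoriented curves, by those involutions.

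Next I would record the orbifold versions of the standard ingredients. The Thurston measure $\mathbf{m}_\RO$ on $\mathcal{ML}(\RO)$ is the measure determined by the piecewise integral linear structure coming from $G$-invariant train tracks (equivalently, Dehn--Thurston-type coordinates adapted to $\RO$), and it is homogeneous of degree $\dim_\BR\mathcal{ML}(\RO)$. A short Euler-characteristic count --- or the classical fact that $\dim\CT(\RO)=6g-6+2r$ with $g$ the genus of the underlying surface and $r$ the number of cone points plus punctures, independent of the cone orders --- identifies this degree with $6g-6+2r$. I would also note that $\ell_\RO$, and more generally intersection numbers with filling currents, extend to continuous, proper, homogeneous functions on $\mathcal{C}(\RO)$, so that $B=\{\mu\in\mathcal{C}(\RO):\ell_\RO(\mu)\le 1\}$ is compact with $\mathbf{m}_\RO(\partial B)=0$.

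The heart of the proof is the equidistribution statement
\[
\frac1{L^{6g-6+2r}}\sum_{\gamma\in\Map(\RO)\cdot\gamma_0}\delta_{\gamma/L}\ \longrightarrow\ c(\gamma_0)\,\mathbf{m}_\RO \qquad (L\to\infty),
\]
weak-$\ast$ on $\mathcal{C}(\RO)$, for a constant $c(\gamma_0)\in(0,\infty)$, where $\Map(\RO)$ is the mapping class group of the orbifold and $\delta$ denotes a Dirac mass. Granting this, Theorem~\ref{sat1} follows by integrating against the indicator of $B$: the left-hand side becomes $L^{-(6g-6+2r)}\,\vert\{\gamma\text{ of type }\gamma_0:\ell_\RO(\gamma)\le L\}\vert$, and the right-hand side is the finite positive number $c(\gamma_0)\,\mathbf{m}_\RO(B)$ --- positive because $B$ has nonempty interior, $\mathbf{m}_\RO$ has full support, and $c(\gamma_0)>0$ for $\gamma_0$ essential. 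When $6g-6+2r=0$ the orbifold is rigid and $\Map(\RO)$ finite, and the statement merely records that each type is a nonempty finite set.

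The main obstacle is the displayed equidistribution, which I would establish exactly as Mirzakhani, and later Erlandsson--Souto, do for surfaces, once two orbifold inputs are in place: (i) $\Map(\RO)$ acts ergodically on $(\mathcal{ML}(\RO),\mathbf{m}_\RO)$ and $\mathbf{m}_\RO$ is, up to scaling, the unique $\Map(\RO)$-invariant Radon measure in its measure class --- the orbifold analogue of Masur's theorem; and (ii) adequate control of the growth of $\Map(\RO)$-orbits. For (i) one may invoke ergodicity of the Teichm\"uller geodesic flow on the moduli space of $\RO$ and unfold, or deduce it from the surface statement by averaging over $G$ (using $\mathcal{ML}(\RO)=\mathcal{ML}(S)^G$ and that $\Map(\RO)$ is closely related to the subgroup of $\Map(S)$ commuting with $G$, modulo $G$). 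Perhaps cleanest is to bypass ergodic theory and reprove, on $G$-invariant train tracks, the combinatorial fact that integral points in $\mathcal{ML}$ equidistribute toward the Thurston measure, then pass from integral laminations to $\CS^{\orb}(\RO)$ as in the surface case. The remaining pieces --- the currents formalism, the dimension count, and the continuity and properness of the length and intersection functions --- are routine adaptations of the surface theory.
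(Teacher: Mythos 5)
Your route is genuinely different from the paper's: you propose to work intrinsically with the orbifold, identifying $\CC^{\orb}(\RO)$ and $\CM\CL(\RO)$ with the $G$-fixed loci on a torsion-free finite cover $S$ (Selberg), and then to rerun the Mirzakhani/Erlandsson--Souto machinery equivariantly. The paper deliberately avoids this: it transfers the already-proved surface theorem for the associated surface $\Sigma$ (the orbifold minus small disks about the cone points) through a pushforward map on currents, and all of its technical content (the ``as simple as possible'' representatives and Proposition \ref{main proposition} on uniform quasigeodesics) exists precisely because that map does not extend to all of $\CC(\Sigma)$. The problem with your proposal is that its heart---the weak-$*$ convergence of $L^{-(6g-6+2r)}\sum_{\gamma\in\Map^{\orb}(\RO)\cdot\gamma_0}\delta_{\gamma/L}$ to $c(\gamma_0)\,\mathbf{m}_{\RO}$---is not proved but only reduced to ``orbifold inputs'' (i) and (ii) which are themselves the hard theorems. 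In particular, your suggested shortcut for (i) fails: ergodicity of $\Map(S)\actson\CM\CL(S)$ says nothing about the restricted action on $\CM\CL(S)^G$, because that fixed locus has Thurston measure zero in $\CM\CL(S)$, so ``averaging over $G$'' cannot transfer Masur's theorem; one would have to redo Masur's argument (or the integral-point equidistribution on invariant train tracks) in the equivariant setting, which is a substantial new piece of work, not a routine adaptation. Moreover, even granting ergodicity and unique ergodicity of the measure class, what the counting statement needs is existence of the limit, not merely identification of subsequential limits as multiples of $\mathbf{m}_{\RO}$; historically this convergence is the genuinely difficult step (Mirzakhani's integration over moduli space, or its combination with the currents argument), and your sketch does not indicate how its orbifold version would go.

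Two further gaps. First, the identification of $\Map^{\orb}(\RO)$ with (roughly) the normalizer of $G$ in $\Map(S)$ modulo $G$, and the matching of the $\Map^{\orb}(\RO)$-orbit of $\gamma_0$ with an orbit of $G$-invariant multicurves on $S$, requires Birman--Hilden-type justification and careful bookkeeping with the order-two cone points; ``closely related'' is not enough to run the argument. Second, Theorem \ref{sat1} allows an arbitrary finitely generated non-elementary $\Gamma$, so $\RO$ may have cusps and funnels; in that generality your claim that $\{\mu\in\CC^{\orb}(\RO):\ell_{\RO}(\mu)\le 1\}$ is compact is false (the space of currents of a non-compact quotient is not locally compact and $\ell_{\RO}$ is neither continuous nor proper on it). One needs the reduction the paper performs: pass to a compact orbifold core, observe that all geodesics in the mapping class group orbit lie over a fixed compact set $K$, and use Bonahon's continuity of length on the currents supported over $K$ before integrating against the unit ball.
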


A few comments on the notation and terminology used in Theorem \ref{sat1}:
\medskip

\noindent{\bf (1)} The topological space underlying an orientable 2-dimensional hyperbolic orbifold is an orientable topological surface. The genus of the orbifold is by definition the genus of that surface.

\noindent{\bf (2)} In the theorem, and also in the remaining of the paper, $\CS^{\orb}(\RO)$ is the set of free homotopy classes of closed unoriented primitive essential curves, where the homotopy is taken in the category of orbifolds and where essential means that the curves in the given homotopy class are neither peripheral nor represent finite order elements in the orbifold fundamental group $\pi_1^{\orb}(\RO)$. Accordingly, $\ell_{\RO}(\gamma)$ is the length of the shortest curve homotopic to $\gamma$ in the category of orbifolds. See Section \ref{subsec:maps between orbifolds} for details.

\noindent{\bf (3)} As for surfaces, two elements in $\CS^{\orb}(\RO)$ are of the same type if they differ by an element of the mapping class group
$$\Map^{\orb}(\RO)=\Homeo^{\orb}(\RO)/\Homeo^{\orb}_0(\RO).$$
Here $\Homeo^{\orb}(\RO)$ is the group of homeomorphisms of $\RO$ in the category of orbifolds, and $\Homeo^{\orb}_0(\RO)$ is its identity component. The mapping class group $\Map^{\orb}(\RO)$ is infinite unless $\RO$ is {\em exceptional}, by what we mean that it has genus $g=0$ and that $r=3$. See Section \ref{sec orbifold mapping class group} for more details on the mapping class group of an orbifold.
\medskip

As is already the case for the proof in \cite{Book} of Mirzakhani's \eqref{eq-maryam}, we will derive Theorem \ref{sat1} from the weak-*-convergence of certain measures on the space $\CC^{\orb}(\RO)$ of currents, that is the space of $\pi_1^{\orb}(\RO)$-invariant Radon measures on the set of geodesics on the orbifold universal cover $\tilde\RO$ of $\RO$. Trusting that the reader is familiar with currents, we just recall at this point that  the set $\BR_{\ge 0}\CS^{\orb}(\RO)$ of weighted curves is a dense subset of $\CC^{\orb}(\RO)$, that $\CC^{\orb}(\RO)$ is a cone in a linear space, and that the action of $\Map^{\orb}(\RO)$ on $\CS^{\orb}(\RO)$ extends to a linear action on $\CC^{\orb}(\RO)$. We will recall a few facts about currents in Section \ref{sec currents} below, but we do already at this point refer the reader to \cite{AL,BonahonFrench,Bonahon2,Bonahon,Book} for details and background.

\begin{theorem}\label{main}
Let $\RO$ be a compact orientable non-exceptional hyperbolic orbifold with possibly empty totally geodesic boundary and let $\CC^{\orb}(\RO)$ be the associated space of geodesic currents. There is a Radon measure $\FM_{\Thu}$ on $\CC^{\orb}(\RO)$ such that for any $\gamma_0\in\CS^{\orb}(\RO)$ we have
$$\lim_{L\to\infty}\frac 1{L^{6g-6+2r}}\sum_{\gamma\in\Map^{\orb}(\RO)\cdot\gamma_0}\delta_{\frac 1L\gamma}=C(\gamma_0)\cdot\FM_{\Thu}$$
for some positive constant $C(\gamma_0)>0$. Here $g$ is the genus of the orbifold $\RO$, $r$ is the sum of the numbers of singular points and boundary components, and $\RO$ is non-exceptional if $(g,r)\neq(0,3)$. Moreover $\delta_{\frac 1L\gamma}$ stands for the Dirac measure on $\CC^{\orb}(\RO)$ centered at $\frac 1L\gamma$, and the convergence takes place with respect to the weak-*-topology on the space of Radon measures on $\CC^{\orb}(\RO)$.
\end{theorem}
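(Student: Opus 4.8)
The plan is to reduce the orbifold statement to the known surface case by passing to a finite-index torsion-free subgroup and then carefully tracking the action of the finite quotient. By Selberg's lemma, fix a finite-index torsion-free normal subgroup $\Gamma'\subset\Gamma$, so that $S=\BH^2/\Gamma'$ is a compact orientable hyperbolic surface (possibly with geodesic boundary) and $G=\Gamma/\Gamma'$ acts on $S$ by isometries with quotient $\RO$. The covering $S\to\RO$ induces an embedding $\CC^{\orb}(\RO)\hookrightarrow\CC(S)$ whose image is, up to the natural scaling, the subspace $\CC(S)^G$ of $G$-invariant currents; under this identification a curve $\gamma_0\in\CS^{\orb}(\RO)$ pulls back to a $G$-invariant weighted multicurve on $S$, namely the sum of the components of its preimage with appropriate multiplicities. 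The first step is to make this dictionary precise: identify $6g-6+2r$ with the dimension $6\mathbf g-6+2\mathbf r$ of the Teichmüller space of $S$ (equivalently, the top degree of growth on $\CC(S)$) by the Riemann–Hurwitz formula, and check that the length function $\ell_\RO$ on $\CC^{\orb}(\RO)$ corresponds under the embedding to a constant multiple of the restriction of $\ell_S$ — this is where the totally geodesic structure and the behavior at cone points must be handled.

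Next I would recall from \cite{Book} (the proof of Mirzakhani's theorem) that on $S$ one has the weak-$*$ limit
\begin{equation*}
\lim_{L\to\infty}\frac 1{L^{6\mathbf g-6+2\mathbf r}}\sum_{\phi\in\Map(S)\cdot\alpha}\delta_{\frac 1L\phi}=C(\alpha)\cdot\FM_{\Thu}^{S}
\end{equation*}
for every weighted multicurve $\alpha$, where $\FM_{\Thu}^{S}$ is the Thurston measure on $\CC(S)$. The orbifold mapping class group $\Map^{\orb}(\RO)$ sits (via Section \ref{sec orbifold mapping class group}) inside $\Map(S)$ as, roughly, the $G$-centralizer modulo $G$, so the orbit $\Map^{\orb}(\RO)\cdot\gamma_0$ maps to a $\Map(S)$-orbit of multicurves, intersected with the $G$-invariant locus, up to finite-to-one multiplicity. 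The key point is that $\FM_{\Thu}^{S}$ is $\Map(S)$-invariant and in particular $G$-invariant, so it induces by restriction (or disintegration) a Radon measure $\FM_{\Thu}$ on the $G$-invariant subspace $\CC(S)^G\cong\CC^{\orb}(\RO)$; this is the measure we want. One then pushes the surface convergence forward under the (continuous, proper on the relevant cones) embedding $\CC^{\orb}(\RO)\hookrightarrow\CC(S)$ and restricts to the invariant locus, collecting the combinatorial constants from the covering degree and the multiplicities of the multicurve components into the positive constant $C(\gamma_0)$. Positivity of $C(\gamma_0)$ follows because the relevant $\Map(S)$-orbit is infinite (using non-exceptionality, $(g,r)\neq(0,3)$, and the corresponding statement for $S$) and the intersection with $\CC(S)^G$ still carries positive Thurston mass near generic points.

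The main obstacle I anticipate is the orbit-matching step: showing that pushing forward the counting measure for $\Map^{\orb}(\RO)\cdot\gamma_0$ agrees — up to a controllable finite factor — with the restriction to $\CC(S)^G$ of the counting measure for the corresponding $\Map(S)$-orbit. In general $\Map(S)$ is strictly larger than (the image of) $\Map^{\orb}(\RO)$, so the $\Map(S)$-orbit of the lifted multicurve may contain currents that are not $G$-invariant and that never arise from $\RO$; conversely the $G$-invariant part of the $\Map(S)$-orbit could a priori be a union of several $\Map^{\orb}(\RO)$-orbits, or an orbit could fail to be closed under the required symmetrization. Resolving this requires the precise description, from Sections \ref{sec orbifold mapping class group} and \ref{subsec:maps between orbifolds}, of $\Map^{\orb}(\RO)$ as the subgroup of $\Map(S)$ commuting with the $G$-action, together with an equidistribution/averaging argument showing that summing $\delta_{\frac 1L\phi}$ over the larger orbit and then restricting the limiting measure to $\CC(S)^G$ recovers exactly $C(\gamma_0)\cdot\FM_{\Thu}$. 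A secondary technical point is verifying that $\FM_{\Thu}$, defined as the restriction of the Thurston measure to a linear subspace cut out by the $G$-action, is genuinely Radon (locally finite) on $\CC^{\orb}(\RO)$ and is non-trivial; this should follow from Thurston's piecewise-linear train-track charts being compatible with the $G$-action, so that $\CC(S)^G$ is a rational linear subspace on which the piecewise-linear volume form restricts to a genuine (lower-dimensional) volume form of full support.
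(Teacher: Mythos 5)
Your reduction via Selberg's lemma has a fatal exponent mismatch that the proposal glosses over at the very first step. If $S\to\RO$ is a genuinely branched (or even unbranched) finite cover of degree $d>1$, then $6\mathbf g-6+2\mathbf r$ for $S$ is strictly larger than $6g-6+2r$ for the orbifold; Riemann--Hurwitz does not identify them (e.g.\ for a genus $2$ orbifold with one cone point, $6g-6+2r=8$, while any nontrivial cover has $6\mathbf g-6+2\mathbf r$ roughly $d$ times as large). Consequently Mirzakhani's theorem on $S$, which normalizes by $L^{6\mathbf g-6+2\mathbf r}$ and concerns orbits of $\Map(S)$ or of finite-index subgroups, says nothing about the orbit you actually need: the image of $\Map^{\orb}(\RO)$ in $\Map(S)$ (the lifting/centralizer subgroup from Birman--Hilden type considerations) has \emph{infinite} index, its orbit of the $G$-invariant lifted multicurve grows like $L^{6g-6+2r}$, and the $\Map(S)$-orbit counting measure normalized by $L^{6\mathbf g-6+2\mathbf r}$ converges to a multiple of $\FM^{S}_{\Thu}$, which assigns measure zero to the proper closed subspace $\CC(S)^G$. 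So "pushing the surface convergence forward and restricting to the invariant locus" yields only $0=0$. For the same reason your candidate measure, the restriction of $\FM^S_{\Thu}$ to $\CC(S)^G$, is identically zero; the measure one needs on the invariant locus must be built as a separate, lower-dimensional scaling limit (this is exactly the measure $\FM^{S,H}_{\Thu}$ discussed in Section \ref{sec thurston measure} of the paper), and proving equidistribution of the centralizer-subgroup orbit against that measure is essentially the full content of the theorem, not a corollary of the surface case on $S$.

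The paper avoids this trap by reducing to a different surface: the compact surface $\Sigma$ obtained by deleting small disks around the cone points. Its pure mapping class group is isomorphic to $\PMap^{\orb}(\RO)$ (equation \eqref{eq isomorphisms pure mapping class groups}) and its counting exponent is the same $6g-6+2r$, so Mirzakhani's theorem applies verbatim to the orbit upstairs with the correct normalization. The genuine difficulty there is that the map $\CS(\Sigma)\to\CS^{\orb}(\RO)$ does not extend continuously to all currents; the paper fixes this by choosing an "as simple as possible" representative $\eta_0$, using Lemma \ref{lem as simple as possible mapping class group} to keep the whole orbit in the closed set $\CQ(A)$ of uniform quasigeodesics supplied by Proposition \ref{main proposition}, and then pushing the limit forward through the continuous map \eqref{map L geodesics currents measures}. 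If you want to salvage your covering-space strategy you would have to prove an equidistribution theorem for $H$-invariant multicurves under the lifting subgroup with the orbifold normalization, which is a new theorem of the same order of difficulty as the statement being proved.
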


Again a few comments:
\medskip

\noindent{\bf (1)} Theorem \ref{main} remains true if we replace curves by multicurves, that is if we replace $\gamma_0$ by finite formal linear combinations (with positive coefficients) of elements in $\CS^{\orb}(\RO)$. In fact, the proof is just the same, only needing a bit more of notation to keep track of things, and the interested reader will have no difficulties making the necessary tweaks.

\noindent{\bf (2)} Also, as is the case for surfaces, the statement of Theorem \ref{main} remains true if we replace $\Map^{\orb}(\RO)$ by a finite index subgroup $G$, and the constant on the right side changes exactly as it does in the case of surfaces---see \cite[Exercise 8.2]{Book}. In fact, in the course of the proof of Theorem \ref{main} we will have to work with such a finite index subgroup, the pure mapping class group.

\noindent{\bf (3)} The measure $\FM_{\Thu}$ in the statement of Theorem \ref{main} arises as the push-forward under a certain map of the usual Thurston measure on the space of measured laminations of a surface. We will however also give a short intrinsic description of $\FM_{\Thu}$ in Section \ref{sec thurston measure} below.

\noindent{\bf (4)} If one were to drop the assumption in Theorem \ref{main} that the orbifold is non-exceptional then the limit would trivially exist because the mapping class group would be finite, but the measure class of the obtained measure would obviously depend on $\gamma_0$. This is why we do need this assumption in Theorem \ref{main} but not in Theorem \ref{sat1} above or in Theorem \ref{sat3} below.
\medskip

All of this is nice and well and cute, but a more substantial observation is that Theorem \ref{main} implies that Theorem \ref{sat1} also holds if we replace $\ell_{\RO}$ by many other notions of length: length with respect to a variable curvature metric, word-length, extremal length, and so on. In fact, we can replace $\ell_{\RO}$ by any continuous homogenous function
$$F:\CC^{\orb}(\RO)\to\BR_{\ge 0}$$
on the space of currents, where homogeneous means that $F(t\cdot\lambda)=t\cdot F(\lambda)$. See \cite{EPS,DidacThurston} for many examples of such functions. 

\begin{theorem}\label{sat3}
Let $\RO$ be a compact orientable hyperbolic orbifold with possibly empty totally geodesic boundary and let $\CC^{\orb}(\RO)$ be the associated space of geodesic currents. Then the limit
$$\lim_{L\to\infty}\frac 1{L^{6g-6+2r}}\vert\{\gamma\text{ of type }\gamma_0\text{ with }F(\gamma)\le L\}\vert$$
exists and is positive for any $\gamma_0\in\CS^{\orb}(\RO)$ and any positive, homogenous, continuous function $F:\CC^{\orb}(\RO)\to\BR_{\ge 0}$. Here $g$ is the genus of the orbifold $\RO$, $r$ is the sum of the numbers of singular points and boundary components. 
\end{theorem}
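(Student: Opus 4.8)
The plan is to obtain Theorem~\ref{sat3} from Theorem~\ref{main} by integrating a suitable characteristic function against the counting measures, treating exceptional orbifolds separately. Write $d=6g-6+2r$ and first assume $\RO$ is non-exceptional, so $d>0$. Since $F$ is homogeneous, $F(\gamma)\le L$ holds if and only if $\tfrac1L\gamma$ lies in
$$A_F=\{\lambda\in\CC^{\orb}(\RO)\colon F(\lambda)\le 1\},$$
so that
$$\frac1{L^{d}}\bigl\vert\{\gamma\text{ of type }\gamma_0\colon F(\gamma)\le L\}\bigr\vert=\Bigl(\tfrac1{L^{d}}\!\!\!\sum_{\gamma\in\Map^{\orb}(\RO)\cdot\gamma_0}\!\!\!\delta_{\frac1L\gamma}\Bigr)(A_F).$$
By Theorem~\ref{main} the measures in parentheses converge weak-$*$ to $C(\gamma_0)\cdot\FM_{\Thu}$ with $C(\gamma_0)>0$, so everything reduces to showing that the $A_F$-masses converge to $C(\gamma_0)\cdot\FM_{\Thu}(A_F)$ and that $0<\FM_{\Thu}(A_F)<\infty$.

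Passing masses to the limit under weak-$*$ convergence of Radon measures requires $A_F$ to be relatively compact and to have $\FM_{\Thu}$-null topological boundary. Relative compactness is the one point that needs an argument: just as for surfaces (see \cite{Book}), $\ell_{\RO}$ is a proper continuous homogeneous function on $\CC^{\orb}(\RO)$ that vanishes only at $0$; since $F$ is continuous, homogeneous and positive, minimizing $F$ over the compact set $\{\ell_{\RO}=1\}$ yields a constant $c>0$ with $F\ge c\,\ell_{\RO}$ on $\CC^{\orb}(\RO)$, hence $A_F\subseteq\{\ell_{\RO}\le 1/c\}$ is relatively compact. For the boundary, $A_F$ is closed and $\{F<1\}$ is open, so $\partial A_F\subseteq\{F=1\}$; and the realization of $\FM_{\Thu}$ as a push-forward of the Thurston measure (Section~\ref{sec thurston measure}) gives the scaling $\FM_{\Thu}(t\cdot B)=t^{d}\,\FM_{\Thu}(B)$, so $t\mapsto\FM_{\Thu}(\{F\le t\})=t^{d}\,\FM_{\Thu}(A_F)$ is continuous, has no jump at $t=1$, and therefore $\FM_{\Thu}(\{F=1\})=0$. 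A standard sandwich of the characteristic function of $A_F$ between compactly supported continuous functions, together with inner/outer regularity of the Radon measure $\FM_{\Thu}$, then gives the desired convergence of masses. Finally $\FM_{\Thu}(A_F)<\infty$ since $A_F$ is precompact and $\FM_{\Thu}$ is Radon, and $\FM_{\Thu}(A_F)>0$ because otherwise the scaling relation would force $\FM_{\Thu}(\{F\le t\})=0$ for every $t$, hence $\FM_{\Thu}\equiv 0$ on $\CC^{\orb}(\RO)=\bigcup_{t>0}\{F\le t\}$, contradicting that $\FM_{\Thu}$ is non-trivial. This settles the theorem when $\RO$ is non-exceptional.

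It remains to handle the exceptional case $(g,r)=(0,3)$, where $d=6g-6+2r=0$ and the mapping class group $\Map^{\orb}(\RO)$ — hence the orbit $\Map^{\orb}(\RO)\cdot\gamma_0$ — is finite. Since $F$ is finite on each of these finitely many curves, $\{\gamma\text{ of type }\gamma_0\colon F(\gamma)\le L\}$ equals the whole orbit once $L$ is large; as $L^{d}=1$, the limit exists and equals the positive integer $\vert\Map^{\orb}(\RO)\cdot\gamma_0\vert$. The only genuinely non-formal ingredient in the whole argument is the relative compactness of the sublevel set $A_F$, which rests on the properness of intersection-type length functions on $\CC^{\orb}(\RO)$; this is the orbifold counterpart of a standard fact for surfaces, and I would expect it to be available from the basic set-up of $\CC^{\orb}(\RO)$. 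Everything else is the bookkeeping needed to transfer the weak-$*$ convergence of Theorem~\ref{main} into a counting statement.
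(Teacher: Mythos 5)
Your proposal is correct and follows essentially the same route as the paper: rewrite the count as the mass of $\{F\le 1\}$ under the rescaled counting measures, invoke Theorem \ref{main}, and use the homogeneity/scaling of $\FM_{\Thu}$ together with local finiteness to see that $\{F=1\}$ is null so the masses converge, with the exceptional case dismissed as trivial. You merely spell out a few points the paper leaves implicit (relative compactness of the sublevel set via comparison with $\ell_{\RO}$, positivity and finiteness of $\FM_{\Thu}(\{F\le 1\})$), and these additions are sound.
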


Let us now describe the strategy of the proof of our main result, Theorem \ref{main}. Instead of aiming to give a stand alone proof of the theorem along the lines of the proof in \cite{Book} of the corresponding result for surfaces, we are going to use the latter to obtain that for orbifolds. Suppose for the sake of concreteness that $\RO$ has no boundary and a single cone point and let $\Sigma$ be the surface obtained by deleting from $\RO$ a small ball around that singular point. The inclusion $\Sigma\hookrightarrow\RO$ induces a surjective map 
\begin{equation}\label{eq silly map sick of this}
\CS(\Sigma)\to\CS(\RO)\cup\{*\}
\end{equation}
where $*$ is just a point where one maps all essential curves in $\Sigma$ which are non-essential in $\RO$. In fact, this map is equivariant under the isomorphism $\Map(\Sigma)\simeq\Map^{\orb}(\RO)$ between the corresponding mapping class groups. Equivariance under this isomorphism implies that whenever $\eta_0\in\CS(\Sigma)$ maps to $\gamma_0\in\CS^{\orb}(\RO)$ then the push-forward under \eqref{eq silly map sick of this} of the measure $\frac 1{L^{6g-6+2r}}\sum_{\eta\in\Map(S)\cdot\eta_0}\delta_{\frac 1L\eta}$ is the measure $\frac 1{L^{6g-6+2r}}\sum_{\gamma\in\Map^{\orb}(\RO)\cdot\gamma_0}\delta_{\frac 1L\gamma}$. From the analogue of Theorem \ref{main} for surfaces, stated in Section \ref{need batteries} below, we get that the limit 
$$\lim_{L\to\infty}\frac 1{L^{6g-6+2r}}\sum_{\eta\in\Map(S)\cdot\eta_0}\delta_{\frac 1L\eta}$$ 
exists. As we see, Theorem \ref{main} would directly follow if the map \eqref{eq silly map sick of this} were to extend continuously to a map
\begin{equation}\label{eq silly map sick of this2}
\CC(\Sigma)\to\CC^{\orb}(\RO).
\end{equation}
It is however easy to see that such an extension does not exist: for any three essential $\alpha,\beta,\gamma\in\pi_1(\Sigma)$ with $\beta\in\Ker(\pi_1(\Sigma)\to\pi_1^{\orb}(\RO))$ we have that $\frac 1{2n}[\alpha^n,\beta]\gamma$ converges when $n\to\infty$ in $\CC(\Sigma)$ to $\alpha$ but is mapped to $\frac 1{2n}\gamma\in\CC^{\orb}(\RO)$ which converges to 0. We by-pass this problem by choosing the representative $\eta_0$ of $\gamma_0$ so that the currents of the form $\frac 1L\eta$ with $\eta\in\Map(\Sigma)\cdot\eta_0$ are all contained in a closed subset of the set of currents on $\CC(\Sigma)$ to which the map \eqref{eq silly map sick of this} actually extends continuously. We choose $\eta_0$ to be {\em as simple as possible} in some precise sense given in Section \ref{sec:as simple as possible}. That the so chosen $\eta_0$ has the desired property follows from Proposition \ref{main proposition}, the technical result at the core of this paper. This proposition basically asserts that the images in $\tilde\RO$ of geodesics in $\tilde\Sigma$ which are as simple as possible are uniformly quasigeodesic. 

\subsection*{Non-orientable orbifolds}It is known that, at least as stated, the limit \eqref{eq-maryam} does not hold for non-orientable surfaces \cite{Gendulphe, Magee} and this is why we assumed in the theorems above that the orbifold is orientable. It is however worth noting that all results here remain true for non-orientable orbifolds whose underlying topological space is an orientable surface. An example is $D\Sigma/\tau$ where $D\Sigma$ is the double of $\Sigma$, an orientable surface with boundary, and $\tau$ is the involution interchanging the copies of the surface. The reason why the theorems remain true is that, up to passing to finite index subgroups, the mapping class group of such an orbifold is isomorphic to the mapping class group of an orientable surface for which we know that the analogue of Theorem \ref{main} holds. Anyways, we decided against extending the theorems above to this kind of non-orientable orbifolds since (1) it would make the paper much harder to read and (2) we do not have any concrete applications in mind.

\subsection*{Plan of the paper}
In Section \ref{sec:orbifolds} we recall some facts and definitions about orbifolds, maps between orbifolds, the mapping class group of orbifolds, and such. In Section \ref{sec:as simple as possible} we state precisely what we mean by {\em as simple as possible} and state, without proof, Proposition \ref{main proposition}. In Section \ref{sec:proofs of theorems} we recall a few facts about currents and, assuming Proposition \ref{main proposition}, prove Theorem \ref{main} and the other results mentioned above. In Section \ref{sec:lemmas} we prove a few facts needed in Section \ref{sec:proof of proposition}, where we prove Proposition \ref{main proposition}.

\subsection*{Acknowledgements} This has been one of those projects that for whatever reason take a long time to be completed. So long in fact that it is be impossible to make a comprehensive list of everyone we owe our gratitude to, and wishing not to be unfair we thank nobody---ingen n\"amnd ingen gl\"omd. With one exception, because the first author has not forgotten that during the start of the project she was supported by Pekka Pankka's Academy of Finland project \#297258 at the University of Helsinki. 

\section{Orbifolds}\label{sec:orbifolds}

In this section we recall a few basics about orbifolds such as definitions, (hyperbolic) orbifolds as orbit spaces, and mapping class groups. We also fix some notation that we will use throughout the paper. This is why we encourage also readers who already know all about orbifolds to at least skim over this section. 

\subsection{Orbifolds per se}
An orbifold $\RO$ is a space which is locally modeled on the quotient space of euclidean space by a finite group action. More precisely, an {\em orbifold chart} of a Hausdorff paracompact topological space $\RO$ is a tuple $(U,\hat U,\Gamma,\phi)$ where $U\subset\RO$ and $\hat U\subset\BR^n$ are open, where $\Gamma$ is a finite group acting on $\hat U$, and where $\phi:\hat U/\Gamma\to U$ is a homeomorphism. An {\em orbifold atlas} is a collection $\{(U_i,\hat U_i,\Gamma_i,\phi_i)\vert\ i\in I\}$ of orbifold charts such that $\{U_i\vert\ i\in I\}$ is an open cover of $\RO$ closed under intersections and such that whenever $U_i\subset U_j$ there are (1) a group homomorphism $f_{i,j}:\Gamma_i\to\Gamma_j$ and (2) an $f_{i,j}$-equivariant embedding 
$$\hat\phi_{i,j}:\hat U_i\to\hat U_j$$
such that the diagram
$$\xymatrix{\hat U_i\ar[d]\ar[r]^{\hat\phi_{i,j}} & \hat U_j\ar[d]\\ \hat U_i/\Gamma_i\ar[d]_{\phi_i}\ar[r] & \hat U_j/\Gamma_j \ar[d]^{\phi_j}\\ U_i \ar@{^{(}->}[r] & U_j}$$
commutes. An {\em orbifold} is then a Hausdorff paracompact space endowed with an orbifold atlas.

The orbifold is orientable if all group actions $\Gamma_i\actson\hat U_i$ and all embeddings $\hat\phi_{i,j}$ are orientation preserving. Similarly if we replace orientable by smooth. An orbifold with boundary is defined in the same way but this time the sets $\hat U_i$ are assumed to be open in $\BR_{\le 0}\times\BR^{n-1}$. An $n$-dimensional hyperbolic orbifold is one where the sets $\hat U_i$ are contained in $\BH^n$, where the actions $\Gamma_i\actson\hat U_i$ preserve the hyperbolic metric, and where the maps $\hat\phi_{i,j}$ are isometric embeddings. To define what is a hyperbolic orbifold with totally geodesic boundary then one copies what we just wrote, only replacing $\BH^n$ by a closed half-space therein.  

As is the case in the world of manifolds, orbifolds have maximal orbifold atlases, orientable orbifolds have maximal orientable orbifold atlases, smooth orientable orbifolds have maximal smooth orientable orbifold atlases, and so on. We will always assume that our orbifolds (with adjectives) are equiped with maximal atlases (with adjectives). 

We refer to \cite{Scott} for more on orbifolds.

\subsection{Singular points}
A point $p$ in an orbifold $\RO$ is {\em singular} if there are an orbifold chart $(U,\hat U,\Gamma,\phi)$ and $\hat p\in\hat U$ with $\phi(\hat p)=p$ and satisfying that $\Stab_\Gamma(\hat p)\neq \Id$. A point which is not singular is {\em regular}. We denote by $\sing(\RO)$ the set of singular points of $\RO$. 

The {\em singular set} $\sing(\RO)$ is a closed subset of $\RO$. It might be empty, but also its complement might be empty. It is actually sometimes really important to allow oneself to work with orbifolds with $\sing(\RO)=\RO$---not the simplest example one can find, but the moduli space $\CM_{2,0}$ of closed Riemann surfaces of genus $2$ is such an orbifold. However, 
\begin{quote}
all orbifolds in this paper are such that $\sing(\RO)$ is a proper subset of $\RO$.
\end{quote}
In the cases we are interested in, namely compact orbifolds which are orientable, connected and 2-dimensional we have that $\sing(\RO)$ is in fact a finite set of points in the interior of $\RO$. 

\begin{bem}
Whenever we need to choose a base point in our orbifold $\RO$, for example when working with the fundamental group $\pi_1^{\orb}(\RO)$, then we will assume without further mention that the base point is regular. The reader might amuse themselves by thinking about what the right notion of base point in the category of orbifolds would be if they allowed singular points to be base points.
\end{bem}

\subsection{Maps between orbifolds}\label{subsec:maps between orbifolds}
A map $f:\RO\to\RO'$ between two orbifolds is then a continuous map such that whenever $\phi_i:\hat U_i/\Gamma_i\to U_i$ and $\phi_i':\hat U_i'/\Gamma_i'\to U_i'$ are orbifolds charts for $\RO$ and $\RO'$ with $f(U_i)\subset U_i'$ then there are a homomorphism $\Gamma_i\to\Gamma_i'$ and an equivariant continuous map $\hat f_i:\hat U_i\to \hat U_i'$ such that the obvious diagram
$$\xymatrix{\hat U_i\ar[r]^{\hat f_i}\ar[d] & \hat U_i' \ar[d] \\ U_i\ar[r]_{f} & U_i'}$$
commutes. If $\RO$ and $\RO'$ are smooth orbifolds and if the $\hat f_i$ are smooth, then $f$ is said to be smooth. Orbifolds, and maps between orbifolds form a category. And the same for smooth orbifolds and smooth maps between them.

If $(U,\hat U,\Gamma,\phi)$ is an orbifold chart of an orbifold $\RO$, then $([0,1]\times U,[0,1]\times\hat U,\Gamma,\Id\times\phi)$ where $g\in\Gamma$ acts on $[0,1]\times \hat U$ via $g(t,x)=(t,gx)$ is an orbifold chart of $[0,1]\times\RO$. The collection of all so obtained orbifold charts forms an orbifold atlas, giving $[0,1]\times\RO$ the structure of an orbifold. It thus makes sense to say that two orbifold maps
$$f,f':\RO\to\RO'$$
are {\em homotopic in the category of orbifolds} if there is an orbifold map
$$F:[0,1]\times\RO\to\RO',\ \ F(t,x)=F_t(x)$$
with $F_0=f$ and $F_1=f'$. 

Anyways, armed with the notion of homotopy of orbifold maps one can define the {\em orbifold fundamental group} $\pi_1^{\orb}(\RO)$ of $\RO$ exactly as one does for the usual fundamental group, just replacing homotopies by orbifold homotopies. One should note that any two orbifold maps which are homotopic as orbifold maps are also homotopic as maps between topological spaces, but that the converse does not need to be true. In fact, there are plenty of orbifolds which are simply connected as topological spaces but whose orbifold fundamental group is non-trivial, meaning that there are orbifold maps $\gamma:\BS^1\to\RO$ which, as orbifold maps, are not homotopic to constant maps. As is the case for manifolds, in the category of orbifolds free homotopy classes of {\em curves} $\gamma:\BS^1\to\RO$  correspond to conjugacy classes in the orbifold fundamental group. We make the following convention:

\begin{quote}
If $\RO$ is a compact orbifold with boundary then we will say that a curve is {\em essential} if it is not freely homotopic into the boundary and if the associated free homotopy class is that of an infinite order element in the orbifold fundamental group. We will also denote by $\CS^{\orb}(\RO)$ the set of all free homotopy classes of essential curves in $\RO$.
\end{quote}

\subsection{Orbifolds as orbit spaces}
Following word-by-word the usual construction of the universal cover of a manifold but replacing homotopies by homotopies of orbifold maps one gets the orbifold universal cover $\tilde\RO$ of the orbifold $\RO$. As is the case for manifolds, the fundamental group $\pi^{\orb}(\RO)$ acts discretely on the universal cover $\tilde\RO$. Similarly, orbifold maps $f:\RO\to\RO'$ between orbifolds induce homomorphisms $f_*:\pi_1^{\orb}(\RO)\to\pi_1^{\orb}(\RO')$ between the associated orbifold fundamental groups and lift to $f_*$-equivariant maps $\tilde f:\tilde\RO\to\tilde\RO'$ between the universal covers. 

Orbifolds whose universal cover is a manifold are said to be {\em good}. And they deserve that name because working with them is much easier than working with general orbifolds. For example there is a pretty concrete description of the orbifold charts for good orbifolds $\RO$. They are namely of the form
$$\phi:\hat U/H\to U$$
where $H\subset\pi_1^{\orb}(\RO)$ is a finite subgroup, where $\hat U\subset\tilde\RO$ is an open connected subset with $H\hat U=\hat U$ and $g\hat U\cap\hat U=\emptyset$ whenever $g\notin H$, where $U$ is the image of $\hat U$ under the universal covering map $\pi:\tilde\RO\to\RO$, and where finally $\phi$ is the map given by $\phi(xH)=\pi(x)$.

Hyperbolic orbifolds, if one wants with geodesic boundary, are good. These are the orbifolds we will be interested in. We fix now the notation that we will be using from this point on:
\begin{quote}
{\bf Notation.} 
Let $\tilde\RO\subset\BH^2$ be a closed connected (2-dimensional) subset of the hyperbolic plane with possibly empty geodesic boundary, let $\Gamma\subset\PSL_2\BR$ be a discrete subgroup which preserves $\tilde\RO$ and such that the induced action $\Gamma\actson\tilde\RO$ is cocompact, and denote by
$$\RO=\tilde\RO/\Gamma$$
the associated hyperbolic orbifold. When needed, we will refer to the hyperbolic metric on both $\RO$ and $\tilde\RO$ by $\rho_{\hyp}$. Finally, we also write 
$$\sing(\Gamma)=\{p\in\tilde\RO\text{ with }\Stab_\Gamma(p)\neq\Id\}$$
for the set of points in $\tilde\RO$ with non-trivial stabilizer, that is the preimage of $\sing(\RO)$ under the map $\tilde{\RO}\to\RO$. 
\end{quote}
In this setting, $\tilde\RO$ is the orbifold universal cover of $\RO$ and $\Gamma=\pi_1^{\orb}(\RO)$ is its orbifold fundamental group. 
 
It is not hard to see that the orbifolds $\RO$ we are interested in are homeomorphic as topological spaces to surfaces, that is to 2-dimensional manifolds. Such homeomorphisms do however destroy the orbifold structure. In fact, much more information is encoded in the surface that we get by deleting the singular points of $\RO$. Since we want to work with compact surfaces, we instead delete small balls around the singular points.

\subsection{The surface associated to a hyperbolic orbifold $\RO$}\label{sec surface associated to orbifold}
Continuing with the same notation let $\RO=\tilde\RO/\Gamma$ be a compact orientable hyperbolic 2-orbifold with possibly empty totally geodesic boundary. We choose now two positive constants $\epsilon$ and $\delta$ which will accompany us throughout the paper. Other than being very small, say $\epsilon<10^{-10}$, here are the conditions that $\epsilon$ has to satisfy:
\begin{itemize}
\item[(C1)] $200\epsilon$ is less than the length of the shortest non-trivial periodic $\rho_{\hyp}$-geodesic in $\RO$,
\item[(C2)] $200\epsilon$ is less than the minimal distance between any two points in $\sing(\Gamma)$, and
\item[(C3)] $200\epsilon$ is less than the distance between any point in $\D\tilde\RO$ and any point in $\sing(\Gamma)$.
\end{itemize} 
When it comes to $\delta$ we will later give a fourth condition (see (C4) in Section \ref{sec:lemmas}) that it has to satisfy but for now we just assume that $3\delta<\epsilon$. Note that this implies that the $\delta$-balls around points in $\sing(\Gamma)$ are disjoint of each other and do not meet $\D\tilde\RO$. This means that
\begin{equation}\label{eq associated surface}
\hat\Sigma=\tilde\RO\setminus\CN_{\hyp}(\sing(\Gamma),\delta)
\end{equation}
is a smooth surface with boundary, where
$$\CN_{\hyp}(X,r)=\{p\in\tilde\RO\text{ with }d_{\hyp}(p,X)< r\}.$$

Note that the action of $\Gamma=\pi_1^{\orb}(\RO)$ on $\tilde\RO$ induces an action on $\hat\Sigma$ which is not only discrete but also free. We refer to the quotient surface
$$\Sigma=\hat\Sigma/\Gamma$$
as {\em the surface associated to the orbifold $\RO$} and denote its universal cover by $\tilde\Sigma$. By construction, it is also the universal cover of $\hat\Sigma$. In fact, $\hat{\Sigma}$ is the cover of $\Sigma$ corresponding to the normal subgroup of $\pi_1(\Sigma)$ generated by all loops homotopic into $\partial\Sigma\setminus\D\RO$.

\begin{bem} 
We denote by $B_{\hyp}(q,r)\subset\tilde{\RO}$ the hyperbolic ball of radius $r$ around $q$. Equivalently,
$$B_{\hyp}(q,r)=\CN_{\hyp}(\{q\},r).$$ 
Also, abusing terminology we will not distinguish between $\CN_{\hyp}(X,r)$ or $B_{\hyp}(q,r)$ and their closures. Thats is, both open balls and closed balls, and open neighborhoods and closed neighborhoods are denoted using the same symbol.
\end{bem}

\subsection{Mapping class groups of the orbifold and of the associated surface}\label{sec orbifold mapping class group}
As is the case for manifolds, one can say anything one wants to say about the orbifold $\RO=\tilde\RO/\Gamma$ in terms of $\Gamma$-equivariant objects in the universal cover $\tilde\RO$. For example, the group $\Homeo^{\orb}(\RO)$ of orbifold self-homeomorphisms of $\RO$ can be identified with 
$$\Homeo^{\orb}(\RO)=\Homeo_\Gamma(\tilde{\RO})/\Gamma$$
where 
$$\Homeo_\Gamma(\tilde\RO)=\{\tilde f\in\Homeo(\tilde\RO)\text{ with }\tilde f\Gamma\tilde f^{-1}=\Gamma\}$$
is the group of (topological) homeomorphisms of $\tilde\RO$ conjugating $\Gamma$ to itself. The mapping class group, in the category of orbifolds, of $\RO$ is then the group 
$$\Map^{\orb}(\RO)=\Homeo^{\orb}(\RO)/\Homeo_0^{\orb}(\RO)$$
where $\Homeo_0^{\orb}(\RO)$ is the identity component of $\Homeo^{\orb}(\RO)$.

The group $\Homeo_\Gamma(\tilde\RO)$ acts on the set
$$\sing(\Gamma)=\{p\in\tilde\RO\text{ with }\Stab_\Gamma(p)\neq\Id\}$$
of points with non-trivial stabilizer. It also acts on the set $\pi_0(\D\tilde\RO)$ of boundary component of $\tilde\RO$. It follows that the mapping class group acts on the finite sets $\sing(\Gamma)/\Gamma$ and $\pi_0(\D\tilde\RO)/\Gamma$.  The {\em pure mapping class group} 
$$\PMap^{\orb}(\RO)=\{\phi\in\Map^{\orb}(\RO)\text{ pointwise fixing }\sing(\Gamma)/\Gamma\text{ and }\pi_0(\D\tilde\RO)/\Gamma\}$$
is the finite index subgroup of $\Map^{\orb}(\RO)$ consisting of mapping classes which act trivially on these two sets.

Note now that the canonical inclusion $\Sigma\hookrightarrow\RO$ into our orbifold of the associated surface is an embedding in the category of orbifolds. We have however also other interesting maps $\Sigma\to\RO$, namely those which are the identity outside of a small neighborhood of the boundary of $\Sigma$ and which map $\Sigma\setminus\D\Sigma$ homeomorphically to $\RO\setminus\sing(\RO)$. Such maps are not homeomorphisms but they induce homorphisms between the group of homeomorphisms of $\Sigma$ acting trivially on $\pi_0(\D\Sigma)$ and the group of orbifold homeomorphisms of $\RO$. Any such map induces an isomorphism between the pure mapping class groups 
\begin{equation}\label{eq isomorphisms pure mapping class groups}
\PMap(\Sigma)\simeq\PMap^{\orb}(\RO)
\end{equation}
of $\Sigma$ and $\RO$, where 
$$\PMap(\Sigma)=\{\phi\in\Homeo(\Sigma)\text{ acting trivially on }\pi_0(\D\Sigma)\}/\Homeo_0(\Sigma).$$
It is well-known that every mapping class in $\Map(\Sigma)$ can be represented by a diffeomorphism. 

Although our definition of the mapping class group differs from theirs (we do not have twists around the boundary) we refer to the book \cite{Farb-Margalit} by Farb and Margalit for background on the mapping class group. 

\subsection{A metric on the associated surface $\hat\Sigma$}\label{sec:metric}
Although (locally) negatively curved from the point of view of comparison geometry, the restriction of the hyperbolic metric $\rho_{\hyp}$ to $\hat\Sigma$ is not as nice as one would wish. The problem is that, since the new boundary components are concave, geodesics are not uniquely determined by their tangent vectors at a point. In particular, distinct geodesics do not need to be transversal to each other. This is why we from now on endow $\hat\Sigma$ with a smooth Riemannian metric $\rho$ with the following properties: 
\begin{itemize}
\item $\rho$ is negatively curved and $\Gamma$-invariant.
\item The boundary of $\hat\Sigma$ is totally geodesic with respect to $\rho$.
\item Both $\rho$ and $\rho_{\hyp}$ agree on the subset $\tilde\RO\setminus\CN_{\hyp}(\sing(\Gamma),2\delta)$ of $\hat\Sigma$.
\item If $I\subset\tilde\RO$ is a $\rho_{\hyp}$-geodesic segment starting at a point $\sing(\Gamma)$ and with $\rho_{\hyp}$-length $3\delta$, then $I\cap\hat\Sigma$ is a $\rho$-geodesic segment perpendicular to the boundary of $\Sigma$.
\end{itemize} 
The reason why we impose this final condition is that, if $p\in\sing(\Gamma)$ and $r>2\delta$ are such that $\rho$ and $\rho_{\hyp}$ agree on $B_{\hyp}(p,r)\setminus B_{\hyp}(p,2\delta)$ then the $\rho_{\hyp}$ radial foliation $\CF$ of $B_{\hyp}(p,r)\setminus B_{\hyp}(p,\delta)$ is $\rho$-geodesic. It follows in particular that the restriction of the radial projection
$$B_{\hyp}(p,r)\setminus B_{\hyp}(p,\delta)\to\D B_{\hyp}(p,r)$$
to any $\rho$-geodesic segment $\eta$ which is not contained in a leaf of $\CF$ is monotonic in the sense that its derivative is never $0$. We thus get that simple $\rho$-geodesic segments $\eta\subset B_{\hyp}(p,r)\setminus B_{\hyp}(p,\delta)$ whose endpoints are in $\D B_{\hyp}(p,r)$ and meet each leaf of $\CF$ at most once---see Figure \ref{fig radial foliation}. We record this fact for later use:

\begin{lemma}\label{lem:meet rays once}
Suppose that $p\in\sing(\Gamma)$ and $r>2\delta$ are such that $\rho$ and $\rho_{\hyp}$ agree on $B_{\hyp}(p,r)\setminus B_{\hyp}(p,2\delta)$, and let $\eta\subset B_{\hyp}(p,r)\setminus B_{\hyp}(p,\delta)$ be a $\rho$-geodesic segment whose boundary points are contained in $\D B_{\hyp}(p,r)$. If $\eta$ is simple, then $\eta$ meets every $\rho_{\hyp}$-geodesic ray emanating out of $p$ at most once. In particular, $\eta$ has at most length $2\pi\sinh(r)$.\qed
\end{lemma}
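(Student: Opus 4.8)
\emph{Proof proposal.} The plan is to transfer the question to the universal cover of the annulus $A=B_{\hyp}(p,r)\setminus B_{\hyp}(p,\delta)$, where the radial foliation $\CF$ unrolls to a product foliation and a $\rho$-geodesic becomes a graph over the angular variable; embeddedness of $\eta$ will then prevent that graph from winding around more than once.

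First I would set things up. Since $\eta$ is a genuine segment with both endpoints on $\D B_{\hyp}(p,r)$, whereas a leaf of $\CF$ is a radial arc meeting $\D B_{\hyp}(p,r)$ in at most one point, $\eta$ is not contained in a leaf; hence, by the discussion preceding the lemma (which uses exactly the standing hypothesis that $\rho=\rho_{\hyp}$ on $B_{\hyp}(p,r)\setminus B_{\hyp}(p,2\delta)$), the radial projection restricts to an immersion on $\eta$. Writing $(s,\theta)\in[\delta,r]\times(\BR/2\pi\BZ)$ for polar coordinates on $A$ centered at $p$ — so that $s$ is the distance to $p$ and the leaves of $\CF$ are the level sets of $\theta$ — this says the angle $\theta$ is strictly monotone along $\eta$. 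I then lift to the universal cover $\tilde A\cong[\delta,r]\times\BR$, with coordinates $(s,\tilde\theta)$, covering map $(s,\tilde\theta)\mapsto(s,\tilde\theta\bmod 2\pi)$, and deck group generated by the fixed-point-free translation $\tau(s,\tilde\theta)=(s,\tilde\theta+2\pi)$. The arc $\eta$ lifts to an embedded arc $\tilde\eta$; since $\tilde\theta$ is strictly monotone along $\tilde\eta$ — say increasing from $a$ to $b$ — I can realize $\tilde\eta$ as the graph $s=h(\tilde\theta)$ of a continuous function $h\colon[a,b]\to[\delta,r]$ with $h\le r$ throughout and $h(a)=h(b)=r$, the last two equalities because the endpoints of $\eta$ lie on $\D B_{\hyp}(p,r)$.

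The key step — and the one I expect to be the main obstacle — is showing $b-a<2\pi$. I would argue by contradiction: if $b-a\ge 2\pi$, then on the nonempty interval $[a+2\pi,b]$ the continuous function $\tilde\theta\mapsto h(\tilde\theta)-h(\tilde\theta-2\pi)$ equals $h(a+2\pi)-r\le 0$ at its left endpoint and $r-h(b-2\pi)\ge 0$ at its right endpoint, so it vanishes at some $\tilde\theta^{*}$. The point $q=(h(\tilde\theta^{*}),\tilde\theta^{*})$ then lies on $\tilde\eta$, and since $h(\tilde\theta^{*})=h(\tilde\theta^{*}-2\pi)$ it also equals $\tau$ applied to the point $(h(\tilde\theta^{*}-2\pi),\tilde\theta^{*}-2\pi)\in\tilde\eta$. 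Thus $\tilde\eta$ meets $\tau(\tilde\eta)$; projecting down to $A$ yields parameters $t_1,t_2$ with $\eta(t_1)=\eta(t_2)$, and simplicity of $\eta$ forces $t_1=t_2$, making $q$ a fixed point of $\tau$ — absurd, since $\tau$ acts freely. (It is precisely this "sandwich" of the graph $\tilde\eta$ between its own $2\pi$-translate, pinned at height $r$ at both ends, that converts embeddedness into the winding bound; attempts via Gauss--Bonnet on the disk or annulus cut off by $\eta$ founder because they are insensitive to winding numbers greater than one.)

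Finally I would harvest the two conclusions from $b-a<2\pi$. A point $\eta(t)$ lies on the $\rho_{\hyp}$-ray out of $p$ at angle $\phi$ exactly when $\tilde\theta(t)\equiv\phi\pmod{2\pi}$; as $\tilde\theta$ is strictly monotone with total variation $b-a<2\pi$, there is at most one such $t$, which is the first assertion. For the length bound, in the contractible cover $\tilde A$ the graph $\tilde\eta$ is homotopic rel endpoints to $\{r\}\times[a,b]$, so after projecting, $\eta$ is homotopic rel endpoints in $A\subset\hat\Sigma$ to the arc $c\subset\D B_{\hyp}(p,r)$ subtending central angle $b-a$, whose length is $(b-a)\sinh(r)<2\pi\sinh(r)$ (here $\rho=\rho_{\hyp}$ on $\D B_{\hyp}(p,r)$ by the hypothesis, as $r>2\delta$). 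Since $(\hat\Sigma,\rho)$ is nonpositively curved, the geodesic $\eta$ is no longer than any curve homotopic to it rel endpoints, whence $\length_\rho(\eta)\le\length_\rho(c)<2\pi\sinh(r)$.
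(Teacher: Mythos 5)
Your proof is correct and follows essentially the same route the paper takes (and leaves largely to the reader, since the lemma is stated with a \qed after the discussion of the radial foliation): strict monotonicity of the angular coordinate along a $\rho$-geodesic not contained in a leaf, with simplicity forcing total winding less than $2\pi$. Your additions—unrolling the annulus and using the intermediate value theorem to rule out winding $\ge 2\pi$, and deducing the length bound from the fact that a geodesic in the nonpositively curved $(\hat\Sigma,\rho)$ (with totally geodesic, hence convex, boundary) minimizes length in its homotopy class rel endpoints, compared against the boundary arc of $\D B_{\hyp}(p,r)$—are exactly the details the paper's figure and sketch leave implicit.
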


\begin{figure}[h]
\includegraphics[width=0.4\textwidth]{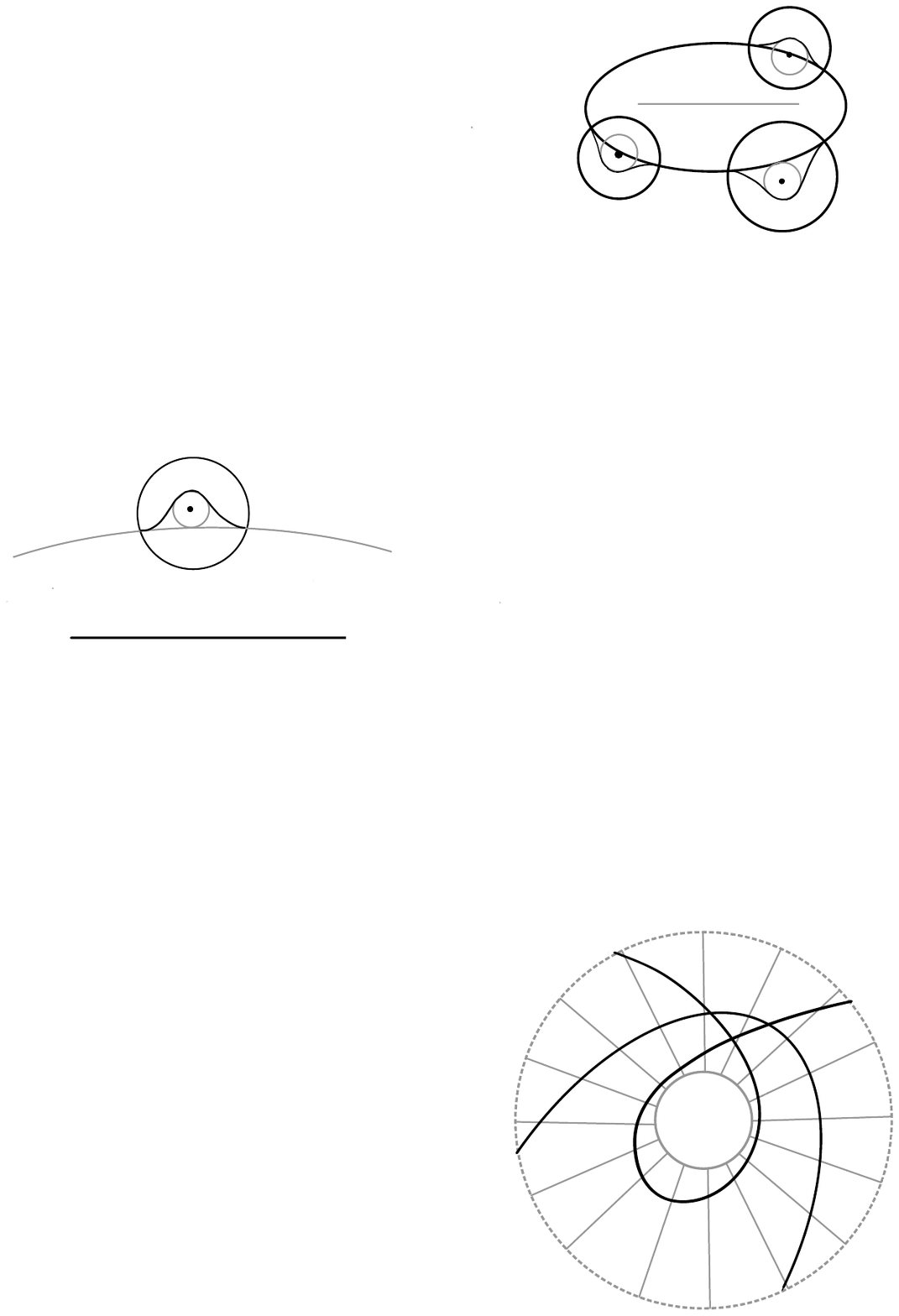}
\caption{Schematic representation of two $\rho$-geodesic segments in $B_{\hyp}(p,3\delta)\setminus B_{\hyp}(p,\delta)$.}
\label{fig radial foliation}
\end{figure}

We should comment on the existence of $\rho$. In fact, it is not hard to construct such a metric. For example, when working in standard hyperbolic polar coordinates $(r,\theta)$ in the ball $B_{\hyp}(p,3\delta)$ around $p\in\sing(\Gamma)$ one can take any 
\begin{equation}\label{go buy batteries}
\rho=dr^2+\phi(r)^2\cdot d\theta^2 
\end{equation}
where $\phi:[\delta,3\delta)\to(0,\infty)$ is a smooth function satisfying
$$ \phi''(\cdot)>0,\ \phi'(\delta)=0, \text{ and }\phi(s)=\sinh(s)\text{ for }s>2\delta.$$
The first condition on $\phi$ ensures that the sectional curvature $\kappa= \frac{-\phi''}{\phi}$ is negative, the second that $\{d_{\hyp}(p,\cdot)=\delta\}$ is totally geodesic, and the third that $\rho$ agrees with $\rho_{\hyp}$ on $B_{\hyp}(p,3\delta)\setminus B_{\hyp}(p, 2\delta)$. In particular, if we use the same function $\phi$ on each $3\delta$-ball around points in $\sing(\Gamma)$ and we set $\rho=\rho_{\hyp}$ outside those balls, then we obtain a $\Gamma$-invariant metric on the whole of $\hat{\Sigma}$. Finally note that the curves $t\mapsto (t, \theta)$, that is the $\rho_{\hyp}$-geodesic segments starting at $p$, are $\rho$-geodesic segments for any choice of $\phi$. In other words, also the fourth property we wanted our metric to satisfy holds. 

Note that $\Gamma$-invariance of $\rho$ implies that it descends to a metric on $\Sigma$ which we once again call $\rho$. Similarly, we denote also by $\rho$ the induced metric on the universal cover $\tilde{\Sigma}$.

\section{As simple as possible representatives}\label{sec:as simple as possible}
Continuing with the same notation let $\RO=\tilde\RO/\Gamma$ be a compact orientable hyperbolic orbifold and let $\Sigma=\hat\Sigma/\Gamma$ with $\hat\Sigma$ as in \eqref{eq associated surface} be the associated surface, endowed with the metric $\rho$ we just fixed. By construction, $\hat\Sigma$ is a connected subset of the universal cover $\tilde\RO$ of $\RO$. It follows that the inclusion $\Sigma\hookrightarrow\RO$ induces a surjective homomorphism
$$\pi_1(\Sigma)\to\pi_1^{\orb}(\RO)=\Gamma.$$
This means that every homotopically essential curve in $\RO$ is freely homotopic (in the category of orbifolds) to one contained in $\Sigma$. In this section we describe how to pick for curves in $\RO$ representatives in $\Sigma$ which are as simple as possible.

\begin{defi*}
A $\rho$-geodesic $\alpha:\BR\to \hat{\Sigma}$ whose image is not contained in $\D\hat\Sigma$ is {\em as simple as possible} if 
\begin{enumerate}
 \item it is injective, and
 \item for all $g\in\Gamma$ the geodesics $\alpha$ and $g(\alpha)$ are either identical or meet at most once.
\end{enumerate}
We say that a $\rho$-geodesic in $\Sigma$ is as simple as possible if its lifts to $\hat{\Sigma}$ are as simple as possible. Similarly, a $\rho$-geodesic in the universal cover $\tilde\Sigma$ of $\Sigma$ is as simple as possible if its images in $\hat\Sigma$ are as simple as possible. Finally, a homotopy class in $\Sigma$ is as simple as possible if its $\rho$-geodesic representative is as simple as possible.  
\end{defi*}

\begin{figure}[h]
\includegraphics[width=0.4\textwidth]{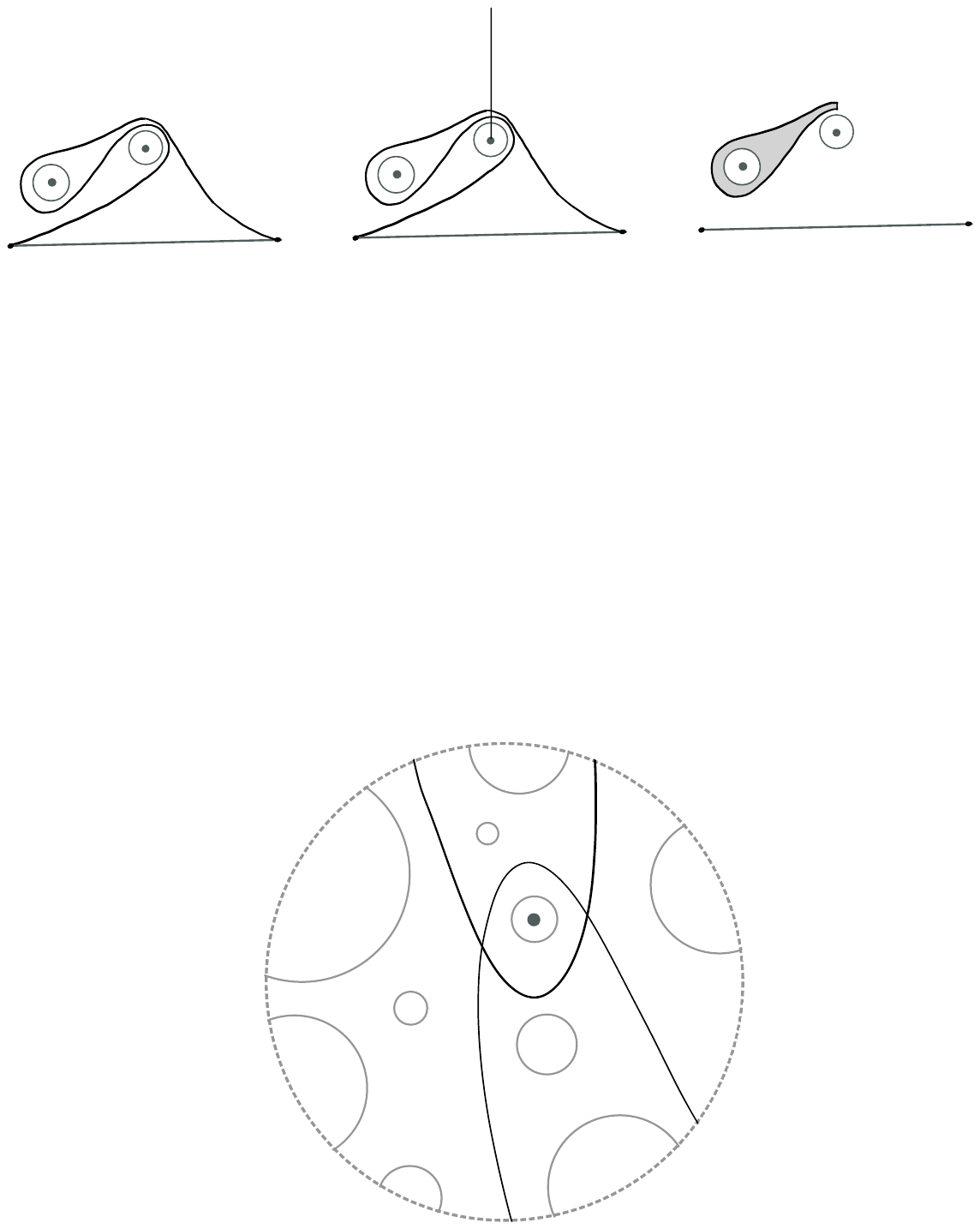}
\caption{Two geodesics (in black) in $\hat{\Sigma}$ which differ by an element in $\Gamma$ fixing the marked point: they are not as simple as possible.}
\label{fig almost simple}
\end{figure}

Before going any further we note that non-trivial closed $\rho_{\hyp}$-geodesics $\gamma:\BS^1\to\RO$ have representatives $\eta:\BS^1\to\Sigma\subset\RO$ which are as simple as possible. It suffices to choose $\eta\subset \Sigma$ to be a shortest representative of $\gamma$. Indeed, the fact that $\eta$ is shortest implies that its lifts to $\hat{\Sigma}$ have no bigons, showing that $\eta$ is as simple as possible. We record this fact for later use:

\begin{lemma}\label{lem banana}
Every $\rho_{\hyp}$-geodesic $\gamma:\BS^1\to\RO$ is freely homotopic, in the category of orbifolds, to a $\rho$-geodesic $\eta:\BS^1\to\Sigma\subset\RO$ which is as simple as possible.\qed
\end{lemma}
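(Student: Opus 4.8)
\emph{Overall plan.} I would take $\eta$ to be a $\rho$-length-minimizing loop in $\Sigma$ among all loops freely homotopic to $\gamma$ in the category of orbifolds, and then argue that minimality of length forces $\eta$ to be as simple as possible.

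\emph{Existence and first properties of $\eta$.} Since $\pi_1(\Sigma)\to\pi_1^{\orb}(\RO)=\Gamma$ is surjective, the orbifold free homotopy class of $\gamma$ contains some loop $\eta_0$ supported in $\Sigma$. In the compact surface $\Sigma$, endowed with the negatively curved metric $\rho$ with totally geodesic boundary, only finitely many free homotopy classes of loops admit a $\rho$-geodesic representative of length at most $\ell_\rho(\eta_0)$; among those that map to the conjugacy class in $\Gamma$ determined by $\gamma$ (a nonempty collection, as it contains the class of $\eta_0$) pick one whose geodesic representative $\eta$ has smallest $\rho$-length. A shortest loop in a compact negatively curved surface with totally geodesic boundary is a closed geodesic contained either in the interior or entirely in a single boundary component; the latter is excluded here because a boundary circle of $\Sigma$ surrounding a point of $\sing(\Gamma)$ represents a finite-order element of $\Gamma$ and a boundary component inherited from $\D\RO$ is peripheral, whereas $\gamma$ is essential. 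So $\eta$ is a closed $\rho$-geodesic in the interior of $\Sigma$, and each of its lifts $\alpha\colon\BR\to\hat\Sigma$ is a $\rho$-geodesic line whose image is not contained in $\D\hat\Sigma$.

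\emph{$\eta$ is as simple as possible.} Suppose first that some lift $\alpha$ fails to be injective, say $\alpha(s)=\alpha(t)$ with $0<t-s$ minimal, and let $P$ be the period of $\alpha$. Projecting the subarc $\alpha|_{[t,\,s+P]}$ to $\Sigma$ yields a loop of $\rho$-length $\ell_\rho(\eta)-\ell_\rho(\alpha|_{[s,t]})$ which again maps to the conjugacy class of $\gamma$ in $\Gamma$; since $\alpha|_{[s,t]}$ has positive length this contradicts the minimality of $\eta$. Hence every lift is injective. Suppose next that two lifts $\alpha$ and $g\alpha$, with $g\in\Gamma$ not stabilizing $\alpha$ setwise, meet at two distinct points $x_1,x_2$, and let $\alpha'\subset\alpha$, $\beta'\subset g\alpha$ be the geodesic subarcs joining them. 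Because $\tilde\Sigma$ is $\mathrm{CAT}(0)$ — being negatively curved with convex totally geodesic boundary, hence uniquely geodesic — the arcs $\alpha'$ and $\beta'$ cannot be homotopic rel endpoints in $\hat\Sigma$, so in $\tilde\RO$ (topologically a disk) the loop $\alpha'\cup\beta'$ bounds a disk $D$ that encloses at least one of the removed balls $\CN_{\hyp}(\sing(\Gamma),\delta)$. Performing the bigon surgery on $\eta$ across $D$ — which, since $D$ sits in the topological surface underlying $\RO$, is realized by a free homotopy in the category of orbifolds — and re-tightening inside $\Sigma$, one obtains a $\Sigma$-loop freely homotopic to $\gamma$ in $\RO$ of strictly smaller $\rho$-length, again contradicting minimality. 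Thus both conditions of the definition hold for $\eta$, and, being supported in $\Sigma\subset\RO$ and orbifold-homotopic to $\gamma$, it is the required representative.

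\emph{Main obstacle.} The delicate step is the last length comparison: one must check that the bigon surgery across a hole, followed by re-tightening inside $\Sigma$, genuinely decreases $\rho$-length. This is precisely where the careful construction of the auxiliary metric $\rho$ in Section~\ref{sec:metric} is used — coincidence with $\rho_{\hyp}$ away from the $2\delta$-balls, convex radial profile near the cone points, totally geodesic inner boundary — so that a $\rho$-geodesic arc forced to wrap around a hole is strictly longer than one cutting across it; the monotonicity of the radial projection underlying Lemma~\ref{lem:meet rays once} is the quantitative input here.
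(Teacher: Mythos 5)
Your skeleton — take a shortest $\rho$-representative of the orbifold class in $\Sigma$ and use minimality to rule out self-crossings of lifts and double crossings of translates — is exactly the paper's argument, and your existence step is fine. But two of the places where you are explicit do not hold up as written. In the injectivity step you use the complementary arc $\alpha|_{[t,\,s+P]}$, which presupposes $t-s<P$; minimality of $t-s$ does not give this, since the shortest subarc of $\alpha$ returning to the same point of $\hat\Sigma$ may wrap around the closed geodesic more than once (what is needed is that the corresponding chord loop lies in $\ker(\pi_1(\Sigma)\to\Gamma)$, and no chord shorter than one period need lie there). If $t-s\ge P$ your surgery produces nothing, and this case requires a separate argument (for $P<t-s<2P$ one can reverse orientation and use the initial arc, which represents $g_0^{-1}$; the general case is simply not addressed). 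Second, your justification that the bigon surgery preserves the class --- ``since $D$ sits in the topological surface underlying $\RO$, [it] is realized by a free homotopy in the category of orbifolds'' --- is exactly the inference this lemma must avoid: a homotopy in the underlying surface that sweeps across a cone point is in general \emph{not} an orbifold homotopy; that is the whole difference between $\pi_1$ of the underlying space and $\pi_1^{\orb}$. The surgery does preserve the orbifold class, but for a different reason: the two exchanged arcs are arcs in the manifold $\tilde\RO$ with the same endpoints, so the exchange is covered by a homotopy in $\tilde\RO$ rel endpoints, which projects to an orbifold homotopy of $\RO$; equivalently, performing the exchange $\Gamma$-equivariantly one checks that the new curve still lifts to a path from $x$ to $g_0x$ and hence still represents $g_0$. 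This equivariant bookkeeping is also what guarantees that the surgered object is a single closed curve representing $\gamma$, which you never verify.

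Your ``main obstacle'' paragraph also mislocates the difficulty. The strict length decrease owes nothing to the radial structure of $\rho$ near the cone points nor to Lemma~\ref{lem:meet rays once} (which enters only in the proof of Proposition~\ref{main proposition}); it is the elementary exchange argument: keep the shorter of the two arcs, swapping the roles of the two lifts if necessary, and if the arcs have equal length the new curve has genuine corners at the transversal crossings, so tightening in its free homotopy class in $\Sigma$ strictly shortens it. The genuinely delicate point of this lemma is the one you justified incorrectly, namely that the exchange does not change the orbifold free homotopy class even when the disk between the arcs contains singular points; once that is argued via $\tilde\RO$ (or via the images in $\Gamma$), the paper's one-line proof --- a shortest representative has no bigons, hence is as simple as possible --- is exactly this exchange argument.
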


The reader might be wondering why instead of simply speaking of shortest representatives we choose something as clumsy as ``as simple as possible''. The reason is that the latter property is mapping class group invariant:

\begin{lemma}\label{lem as simple as possible mapping class group}
If $\eta\subset\Sigma$ is as simple as possible then $\phi(\eta)$ is also as simple as possible for every $\phi\in\PMap(\Sigma)$.
\end{lemma}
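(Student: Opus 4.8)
The plan is to exploit the fact that, up to isotopy, an element $\phi\in\PMap(\Sigma)$ can be realized by a homeomorphism of $\RO$ in the category of orbifolds, hence by a $\Gamma$-equivariant homeomorphism $\tilde\phi$ of $\tilde\RO$ which preserves $\sing(\Gamma)$ pointwise-orbit-wise. The key point is that "as simple as possible" is a purely topological condition on the lifted geodesic $\alpha\subset\hat\Sigma$ together with the $\Gamma$-action, and topological conditions of this kind are invariant under such equivariant homeomorphisms once we pass from geodesics to their free homotopy classes. So the first step is to fix a representative: let $\eta\subset\Sigma$ be as simple as possible, let $\phi\in\PMap(\Sigma)$, and let $\phi(\eta)^*$ denote the $\rho$-geodesic representative of the free homotopy class $\phi([\eta])$; we must show $\phi(\eta)^*$ is as simple as possible.

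First I would lift everything: fix a lift $\alpha\subset\hat\Sigma$ of $\eta$, which by hypothesis is injective and satisfies that $\alpha$ and $g\alpha$ are equal or meet at most once for every $g\in\Gamma$. Using \eqref{eq isomorphisms pure mapping class groups}, represent $\phi$ by a homeomorphism $f$ of $\RO$ fixing each point of $\sing(\RO)$ and each end, lift it to a $\Gamma$-equivariant homeomorphism $\tilde f:\tilde\RO\to\tilde\RO$ (normalizing $\Gamma$, so $\tilde f g\tilde f^{-1}=\phi_*(g)$), and note that $\tilde f$ carries $\hat\Sigma$ to a $\Gamma$-invariant neighborhood of itself, so after an equivariant isotopy supported near $\partial\hat\Sigma$ we may assume $\tilde f(\hat\Sigma)=\hat\Sigma$. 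Then $\tilde f(\alpha)$ is a properly embedded line in $\hat\Sigma$, freely homotopic (equivariantly) to a lift $\beta$ of $\phi(\eta)^*$, and for each $g$ the curves $\tilde f(\alpha)$ and $g\tilde f(\alpha)=\tilde f(\phi_*^{-1}(g)\alpha)$ are equal or meet at most once because $\alpha$ and $\phi_*^{-1}(g)\alpha$ are. The task is thus reduced to promoting these two properties from the homeomorphic image $\tilde f(\alpha)$ to the geodesic representative $\beta$.

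The main obstacle — and the heart of the argument — is precisely this last reduction: injectivity and the "meet at most once" property are not automatically preserved under free homotopy, only under homotopy through embeddings. Here I would invoke the negative curvature of $\rho$ and the standard bigon criterion: a $\rho$-geodesic in $\hat\Sigma$ (with totally geodesic boundary, in a Hadamard-type setting since $\hat\Sigma$ lifts to $\tilde\Sigma$ which is CAT($-\kappa$)) is the \emph{unique} geodesic in its class, has no self-intersections realizing a bigon, and among all curves equivariantly homotopic to $\tilde f(\alpha)$ it minimizes self-intersection number and pairwise intersection number with each of its $\Gamma$-translates simultaneously — this is the orbifold/equivariant version of the fact that geodesics minimize intersection in their homotopy class, which follows because any excess intersection produces an innermost bigon that can be removed by a homotopy, contradicting the geodesic property (one must be slightly careful that removing a bigon in $\hat\Sigma$ does not push the curve across $\partial\hat\Sigma$, but the boundary is totally geodesic and concavity issues were already dealt with by the choice of $\rho$, cf. Section \ref{sec:metric}). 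Since $\tilde f(\alpha)$ already has zero self-intersections and meets each translate at most once, the minimizing geodesic $\beta$ in the same class has self-intersection $0$ and meets each $g\beta$ at most once as well; moreover if $\beta=g\beta$ for some $g$ this already holds for $\tilde f(\alpha)$ up to homotopy, hence for $\alpha$, which is allowed. Unwinding the lifts back down to $\Sigma$ then shows $\phi(\eta)^*=\phi(\eta)$ (as a homotopy class of geodesics) is as simple as possible, which completes the proof.
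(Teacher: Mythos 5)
Your overall strategy coincides with the paper's: realize $\phi$ by a homeomorphism, lift it equivariantly so that the image of a lift of $\eta$ inherits injectivity and the ``meets each $\Gamma$-translate at most once'' property, and then promote these properties to the $\rho$-geodesic representative. The first half is fine; the paper lifts a representative of $\phi$ directly from $\Sigma$ to $\hat\Sigma$, using that $\varphi_*$ preserves the subgroup $\pi_1(\hat\Sigma)\subset\pi_1(\Sigma)$, which is a little cleaner than passing through $\tilde\RO$ and isotoping the lift to preserve $\hat\Sigma$, but these are equivalent.

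The problem is the promotion step, which you rightly call the heart of the argument but then discharge with reasoning that does not work as stated. ``Any excess intersection produces an innermost bigon that can be removed by a homotopy, contradicting the geodesic property'' is not a proof: removing a bigon by a homotopy contradicts nothing. What one actually knows is that the geodesic bounds no bigons with itself or its translates (distinct $\rho$-geodesics lifted to the Hadamard universal cover $\tilde\Sigma$ meet at most once); the nontrivial input is the converse-type statement that bigon-free configurations realize the minimal intersection pattern. For simple closed curves this is the elementary bigon criterion, but the curves here are non-simple in $\Sigma$ (that is the whole point: they do meet their translates), and for non-simple curves innermost-bigon surgery is exactly where naive arguments break down (singular, non-embedded bigons, the curve re-entering the bigon, etc.). Moreover, the lemma needs more than the usual fact that geodesics minimize (self-)intersection number in $\Sigma$: it needs the per-translate statement about a fixed lift in the intermediate cover $\hat\Sigma$ versus each individual $g\in\Gamma$, and asserting the ``equivariant version, simultaneously for all translates'' is precisely what has to be proved --- minimality of the total count downstairs does not formally imply it. The paper closes this gap by quoting Hass--Scott \cite{Hass-Scott}: both the image curve and its geodesic representative are bigon-free, hence transversely freely homotopic, so intersection points of lifts with individual $\Gamma$-translates are neither created nor destroyed along the homotopy. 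To complete your proof you should either invoke that theorem at this point, or replace the bigon heuristic by a genuine per-translate minimality argument (e.g.\ counting linked pairs of ideal endpoints of lifts in $\tilde\Sigma$, which is an equivariant homotopy invariant forcing each intersection of the geodesic with its $g$-translate).
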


\begin{proof}
Abusing notation, denote the $\rho$-geodesic freely homotopic to $\eta$ by the same letter. To determine $\phi(\eta)$ choose first a representative $\varphi\in\Diff(\Sigma)$ of the mapping class $\phi$ and let $\varphi_*:\pi_1(\Sigma)\to\pi_1(\Sigma)$ be the homomorphism induced by $\varphi$---we can always choose $\varphi$ so that it fixes some point and take that point as the base point for the fundamental group. Note that $\varphi_*$ preserves the normal subgroup of $\pi_1(\Sigma)$ generated by loops freely homotopic into $\D\Sigma\setminus\D\RO$ and that $\pi_1(\hat\Sigma)\subset\pi_1(\Sigma)$ is nothing other than this subgroup. We get that $\varphi$ lifts to $\hat\Sigma$, or more precisely, that there is a $\varphi_*$-equivariant lift $\hat\varphi\in\Diff(\hat\Sigma)$. Now, if $\hat\eta$ is a lift of $\eta$ to $\hat\Sigma$ then we have for all $g\in\Gamma$ that
$$\hat\varphi(\hat\eta)\cap\varphi_*(g)\left(\hat\varphi(\hat\eta)\right)=\hat\varphi(\hat\eta)\cap\hat\varphi(g\hat\eta)=\hat\varphi(\eta\cap g\hat\eta).$$
Since $\eta$ was as simple as possible we get that $\hat\varphi(\hat\eta)$ is simple and that $\hat\varphi(\hat\eta)\cap\varphi_*(g)\hat\varphi(\hat\eta)$ intersects its individual $\Gamma$-translates at most once, and that if these intersections take place then they are transversal to each other. 

It follows that the image of $\varphi(\eta)$ in $\Sigma$ has no bigons. The same is true for $(\phi(\eta))_*$, the geodesic in $(\Sigma, \rho)$ freely homotopic to $\varphi(\eta)$. Now, \cite[Theorem 2.1]{Hass-Scott} implies that these two curves are not only freely homotopic to each other but also transversely freely homotopic to each other. This means in particular that intersection points are neither destroyed nor created during the homotopy. Hence, each lift of $(\phi(\eta))_*$ to $\hat\Sigma$ meets its individual $\Gamma$-translates in at most one point. In other words, $\phi(\eta)$ is as simple as possible.
\end{proof}

The reason why we are interested in $\rho$-geodesics in $\hat\Sigma$ which are as simple as possible is that, as we will see shortly, this topological property implies that they are uniform quasigeodesics with respect to the hyperbolic metric. Recall that a continuous curve $\alpha:\BR\to\tilde\RO$ is {\em $A$-quasigeodesic} if we have
$$A\cdot\vert t-s\vert+A\ge d_{\hyp}(\alpha(s),\alpha(t))\ge\frac 1A\vert s-t\vert-A$$
for all $s,t\in\mathbb{R}$. It is quasigeodesic if it is $A$-quasigeodesic for some $A\ge 1$.

We are now ready to state the key technical result of this paper:

\begin{prop}\label{main proposition}
Let $\RO$ be as in the statement of Theorem \ref{main}, $\hat{\Sigma}$ as in \eqref{eq associated surface}, and $\rho$ the metric on $\hat{\Sigma}$ constructed in Section \ref{sec:metric}.  There exists $A\ge 1$ such that any unit speed $\rho$-geodesic $\alpha: \BR\to\hat\Sigma$ which is (1) a quasigeodesic in $(\tilde{\RO},\rho_{\hyp})$ and (2) as simple as possible, is actually $A$-quasigeodesic in $(\tilde{\RO},\rho_{\hyp})$. 
\end{prop}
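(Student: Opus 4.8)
The plan is to argue by contradiction and extract a limiting picture. Suppose no such uniform $A$ works; then there is a sequence of unit speed $\rho$-geodesics $\alpha_n:\BR\to\hat\Sigma$, each a $\rho_{\hyp}$-quasigeodesic and as simple as possible, witnessing that the best quasigeodesic constant degenerates: there are $s_n<t_n$ with $d_{\hyp}(\alpha_n(s_n),\alpha_n(t_n))/(t_n-s_n)\to 0$ (the upper bound $d_{\hyp}\le \vert t-s\vert$ is automatic since $\rho\ge\rho_{\hyp}$ away from the cusp balls and $\rho$ is $1$-Lipschitz comparable near them, so only the lower bound can fail). The key heuristic is that a $\rho$-geodesic can fail to be a hyperbolic quasigeodesic only by spending a lot of time looping inside the finitely many $\delta$-balls $B_{\hyp}(p,\delta)$ around points of $\sing(\Gamma)$, where $\rho$ differs from $\rho_{\hyp}$; outside these balls $\rho=\rho_{\hyp}$ and a $\rho$-geodesic segment there is an honest hyperbolic geodesic segment, hence contributes no distortion. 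So I would decompose $\alpha_n\vert_{[s_n,t_n]}$ into the maximal subsegments lying in the fixed (finite, $\Gamma$-equivariant) collection of $\delta$-balls and the complementary subsegments lying in $\hat\Sigma\setminus\CN_{\hyp}(\sing(\Gamma),\delta)$, and try to show each ball-excursion has uniformly bounded $\rho$-length.

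This is exactly where Lemma \ref{lem:meet rays once} enters, and where the ``as simple as possible'' hypothesis does its work. A subarc $\eta\subset B_{\hyp}(p,\delta)$ of $\alpha_n$ with endpoints on $\D B_{\hyp}(p,\delta)$ need not itself be simple, but it is a $\rho$-geodesic arc, and distinct $\Gamma$-translates of $\alpha_n$ through the stabilizer $\Stab_\Gamma(p)$ (which is a nontrivial finite cyclic group acting by rotations about $p$) meet $\eta$ at most once by hypothesis (2). The radial foliation argument of Section \ref{sec:metric} says that inside $B_{\hyp}(p,3\delta)\setminus B_{\hyp}(p,\delta)$ the arc meets each radial ray at most once provided it is simple; I would combine the self-intersection control coming from ``as simple as possible'' with the monotonicity of the radial projection to bound how many times $\eta$ can wind around $p$, and hence bound its $\rho$-length by a constant depending only on $\delta$ and the metric $\rho$ (compare the $2\pi\sinh(r)$ bound). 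The finitely many $\Gamma$-orbits of singular points (by cocompactness) give a single uniform bound $D_0$ on the $\rho$-length of any ball-excursion. I would similarly need a uniform lower bound, away from $0$, on the hyperbolic length of a complementary subsegment between two consecutive ball-excursions — or rather, argue that consecutive excursions cannot enter and leave the same ball along nearly antipodal directions without the curve failing to be as simple as possible — so that each excursion is ``paid for'' by a definite amount of progress in $\BH^2$ on either side.

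Granting these two facts, the endgame is a standard counting estimate: on $[s_n,t_n]$ the curve alternates excursions (each of bounded $\rho$-length $\le D_0$, and of bounded hyperbolic diameter, say $\le D_0$ as well) with hyperbolic-geodesic segments; the hyperbolic geodesic segments, being genuine geodesics, go monotonically toward the fat part, and the triangle inequality together with thin-triangle/Morse-type estimates in $\BH^2$ forces $d_{\hyp}(\alpha_n(s_n),\alpha_n(t_n))$ to grow linearly in the number of segments, while $t_n-s_n$ is comparable to the number of segments times a uniform bound. That contradicts $d_{\hyp}(\alpha_n(s_n),\alpha_n(t_n))/(t_n-s_n)\to 0$. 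The main obstacle I anticipate is the winding bound for a ball-excursion: ``as simple as possible'' only controls intersections of $\alpha$ with its $\Gamma$-translates, not self-intersections of a single excursion arc coming from the non-injectivity of $B_{\hyp}(p,\delta)\to\Sigma$, so I will need to carefully unwind the geometry near $p$ — using that $\Stab_\Gamma(p)$ is a rotation group and that $\alpha$ is \emph{injective} in $\hat\Sigma$ — to convert a long winding excursion into a forbidden second intersection of $\alpha$ with some $g(\alpha)$. I expect this to be the technical heart, presumably occupying Sections \ref{sec:lemmas} and \ref{sec:proof of proposition}.
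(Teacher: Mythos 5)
Your plan misidentifies where quasigeodesicity can fail, and the endgame step built on that misidentification does not work. The distortion of a $\rho$-geodesic $\alpha\subset\hat\Sigma$ is not confined to the excursions through the annuli around $\sing(\Gamma)$ (note, incidentally, that $\alpha$ never enters the $\delta$-balls, since $\hat\Sigma$ excludes them; the metrics only differ on $B_{\hyp}(p,2\delta)\setminus B_{\hyp}(p,\delta)$). Even granting your two local facts---each excursion has uniformly bounded length (this part is fine, via Lemma \ref{lem:meet rays once} and injectivity), and the complementary pieces are honest $\rho_{\hyp}$-geodesics---the conclusion that $d_{\hyp}(\alpha(s),\alpha(t))$ grows linearly in the number of pieces is false in general: a concatenation of genuine hyperbolic geodesic segments joined by short connectors is a quasigeodesic only if one controls the turning angles at the junctions (or makes the geodesic pieces long relative to the connectors), and an excursion that wraps partway around a cone annulus can turn the curve by an arbitrary angle at the cost of bounded length. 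Your only safeguard against this is the remark that the curve ``cannot enter and leave the same ball along nearly antipodal directions,'' but backtracking need not be produced by a single U-turn at one cone point: moderate turns at many different cone points (which are only $\approx 200\epsilon$ apart, so the intervening geodesic pieces are short) can accumulate so that $\alpha$ circles a large region and makes no progress in $\BH^2$, while never violating simplicity or crossing a translate twice in any way your local analysis detects. Also, the opening reduction to a sequence $\alpha_n$ with degenerating constants is never used and would anyway require a compactness/limit argument you do not supply.

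What is missing is precisely the global mechanism the paper builds in Section \ref{sec:lemmas}: the height function $h_\gamma(\eta)$ attached to a hyperbolic segment $\gamma$ and a $\rho$-segment $\eta$ with the same endpoints, the convexity control (C4) and Lemma \ref{lem11} giving $\eta\subset\CN_{\hyp}(\gamma,\max\{50\epsilon,h_\gamma(\eta)\}+\epsilon)$ (Lemma \ref{l:neighborhood}), and above all Lemma \ref{l:main}: if $h_\gamma(\eta)>1$ then a rotation $g\in\Stab_\Gamma(p)$ about a highest cone point, with rotation angle in $[\tfrac{2\pi}{3},\tfrac{4\pi}{3}]$, moves $\gamma$ (or the relevant subsegment $\bar\gamma$) completely off the disk bounded by $\bar\gamma\cup\bar\eta$, forcing $\eta$ and $g\eta$ to cross transversely at least twice and contradicting ``as simple as possible.'' This is what converts the hypothesis (2) into the statement that $\alpha$ fellow-travels, within hyperbolic distance $1+\epsilon$, the hyperbolic geodesic with the same endpoints---a bound valid at every scale, not just inside one annulus---after which a bounded-geometry counting argument (the triangulation and facts (a)--(e) in the paper's proof, where your excursion-length bound reappears as the bound $D$) gives the linear lower bound on progress. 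Without a counterpart to Lemma \ref{l:main}, your argument has a genuine gap at its final step.
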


Proposition \ref{main proposition} will be proved in Section \ref{sec:proof of proposition}. We just add now a few comments:
\medskip

\noindent{\bf (1)} Note that in Proposition \ref{main proposition} we cannot simply drop the assumption that $\alpha$ is a quasigeodesic in $(\tilde{\RO},\rho)$. For example, if $\alpha$ is a lift of a simple geodesic in $\Sigma$ which spirals in both directions onto components of $\partial\Sigma\setminus\partial\RO$ then it is as simple as possible but not a quasigeodesic and in particular not an $A$-quasigeodesic for any choice of $A$. However, this is basically the only case we have to rule out because we could replace the condition that $\alpha$ is a quasigeodesic in the proposition by the assumption that $\alpha$ does not accumulate on a compact component of $\partial{\hat{\Sigma}}$ in either direction. We leave it however as it is because the curves we will be interested in are automatically quasigeodesics: they are lifts $\hat\eta$ to $\hat\Sigma$ of representatives $\eta$ in $\Sigma$ of essential curves $\gamma$ in $\RO$. 

\noindent{\bf (2)} Suppose that $\gamma\in\CS^{\orb}(\RO)$ is an essential curve in $\RO$. Proposition \ref{main proposition} implies that we have  
$$\ell_{\rho}(\eta)\le A\cdot\ell_{\hyp}(\gamma)$$
for any $\rho$-geodesic representative $\eta\subset\Sigma$ of $\gamma$ which is as simple as possible. 
It follows that $\gamma$ only has finitely many such representatives.

\noindent{\bf (3)} 
On the other hand, as simple as possible representatives are not unique. In fact, a primitive closed geodesic in $\RO$ which goes through $k$ cone points of odd order and none of even order, has at least $2^k$ representatives in $\Sigma$ which are as simple as possible: at each one of those $k$ points, steer slightly either right or left to avoid hitting the cone point. And this is not optimal because perturbing the metric slightly one can get the geodesic off $\sing(\RO)$, and representatives that were as simple as possible stay as simple as possible. 

\noindent{\bf (4)} 
The construction sketched in (3) shows that ``shortest" and ``as simple as possible" are not the same thing.

\section{Main results}\label{sec:proofs of theorems}
In this section we prove Theorem \ref{main} assuming Proposition \ref{main proposition}. However, before doing so we have  to recall a few facts about currents and about Mirzakhani's counting theorem.

\subsection{Currents}\label{sec currents}
Let $X$ be a simply connected negatively curved surface with possibly empty totally geodesic boundary, and let $G\subset\Isom_+(X)$ be a discrete subgroup of orientation preserving isometries with $X/G$ compact. We will be interested in the following two possible cases:
\begin{itemize}
\item $X=\tilde\RO\subset\BH^2$ is the universal cover of our compact hyperbolic orbifold $\RO=\tilde\RO/\Gamma$ and $G=\pi_1^{\orb}(\RO)=\Gamma$ is its fundamental group. 
\item $X=(\tilde\Sigma,\rho)$ is the universal cover of $\Sigma$ endowed with the metric $\rho$ and $G=\pi_1(\Sigma)$ is its fundamental group.
\end{itemize}
Let $\CG(X)$ be the set of all unoriented bi-infinite geodesics in $X$. The action $G\actson X$ induces an action of $G$ on $\CG(X)$. A current on $X/G$ is a $G$-invariant Radon measure on $\CG(X)$. Let $\CC^{\orb}(X/G)$ be the space of all currents on $X/G$ endowed with the weak-*-topology. It is a Hausdorff, metrizable, second countable, and locally compact space, and the projectivized space $\BP\CC^{\orb}(X/G)$ is compact. 

\begin{bem}
We insist that $X/G$, and thus our orbifold $\RO$, is compact because this is what guarantees that the space $\CC^{\orb}(X/G)$ is locally compact. 
\end{bem}

There are plenty of currents. In fact there is a natural homeomorphism between $\CC^{\orb}(X/G)$ and the space of geodesic flow invariant Radon measures on the projectivized unit tangent bundle $PT^1X/G$ supported by the set of bi-infinite orbits. For example, every primitive closed unit speed geodesic $\gamma$ in $\CG(X)$, or equivalently every unoriented periodic orbit of the geodesic flow, yields a geodesic flow invariant measure on $PT^1X/G$: the measure of $U$ is the arc length of $\gamma\cap U$. The current associated to this measure is called the {\em counting current associated to the geodesic $\gamma$.} The counting current determines the original geodesic $\gamma$---this justifies referring to the current and the geodesic by the same letter---and the name is explained because, for a set of geodesics $V\subset\CG(X)$ the value of $\gamma(V)$ is nothing other than the number of lifts of $\gamma$ to $X$ which belong to $V$.

Note that every essential curve $\gamma$ in $X/G$ (in the sense that we gave to the word essential at the end of Section \ref{subsec:maps between orbifolds}) is freely homotopic to a unique geodesic $\gamma_*$ in $X/G$. In this case we denote the associated counting current by $\gamma$ instead of $\gamma_*$. We hope that this will not cause any confusion.

\begin{bem}
If the action of $G$ on $X$ is free then we drop the superscript ``$\orb$". For example we write $\CC(\Sigma)$ instead of $\CC^{\orb}(\Sigma)$. We use this superscript to avoid mixing up currents for the orbifold $X/G$ and currents for the underlying topological surface.
\end{bem}

Currents were introduced by Bonahon and we refer to his papers \cite{BonahonFrench,Bonahon2,Bonahon} for details and background. See also \cite{AL}. However, although all these sources are highly recommended, the reader will not be surprised on hearing that we will mostly follow the same notation and terminology as in our book \cite{Book}.

\subsection{Mirzakhani's counting theorem}\label{need batteries}
As we mentioned already in the introduction, Theorem \ref{main} is well-known in the case that we are working with surfaces instead of orbifolds. In that case we have the following result \cite[Theorem 8.1]{Book}:

\begin{named}{Mirzakhani's counting theorem}
Let $\Sigma$ be a compact connected orientable surface of genus $g$ and with $r$ boundary components and suppose that $3g-3+r>0$. Let also $\eta_0\subset\Sigma$ be a homotopically primitive essential curve. Then there are constants $\FC^{\PMap}(\eta_0),\FB^{\PMap}_{g,r}>0$ such that
$$\lim_{L\to\infty}\frac 1{L^{6g-6+2r}}\sum_{\eta\in\PMap(\Sigma)\cdot\eta_0}\delta_{\frac 1L\eta}=\frac{\FC^{\PMap}(\eta_0)}{\FB^{\PMap}_{g,r}}\cdot\FM^\Sigma_{\Thu}$$
Here $\delta_{\frac 1L\eta}$ is the Dirac measure on $\CC(\Sigma)$ centered at the current $\frac 1L\eta$, $\FM_{\Thu}^\Sigma$ is the Thurston measure on $\CC(\Sigma)$, and the convergence takes place with respect to the weak-*-topology on the space of Radon measures on $\CC(\Sigma)$.
\end{named}

\begin{bem}
The counting theorem remains true if we replace the pure mapping class group by any other finite index subgroup of the mapping class group. However the obtained multiple of the Thurston measure depends on the subgroup in question. This explains the superscript $\PMap$ in the constants $\FC^{\PMap}(\eta_0)$ and $\FB^{\PMap}_{g,r}$. See \cite[Exercise 8.2]{Book} for explicit formulas for the dependence of the constants on the chosen subgroup of the mapping class group. 
\end{bem}

Since we named the above theorem after Maryam Mirzakhani while referring to our book \cite{Book} we should add a brief comment on the genesis of this theorem. For simple curves, Mirzakhani proved this theorem in \cite{Maryam0,Maryam1} but for general curves the history is slightly more complicated. To explain why, note that the theorem implies that
$$\lim_{L\to\infty}\frac{\{\gamma\in\PMap(\Sigma)\text{ with }F(\gamma)\le L\}}{L^{6g-6+2r}}=\frac{\FC^{\PMap}(\gamma_0)}{\FB^{\PMap}_{g,r}}\cdot\FM_{\Thu}^\Sigma(\{F(\cdot)\le 1\})$$
whenever $F:\CC(\Sigma)\to\BR_+$ is a continuous, positive and homogenous function (compare with \cite[Theorem 9.1]{Book} or with the proof of Theorem \ref{sat3} below). In \cite{Maryam2} Maryam proved the existence of the latter limit in the case that $F$ is the hyperbolic length. At the same time, we were also investigating the same problem and we proved in \cite{ES} that every sublimit of the sequence in the counting theorem has a subsequence which converges to a multiple of the Thurston measure. The existence of the limit in the counting theorem follows if one combines these two facts \cite{Maryam2,ES}. This was clear to both Mirzakhani and ourselves at the time. A problem with that state of affairs was that Mirzakhani's arguments and ours come from different places, and this made things a bit too opaque. For example there was some confusion about the chosen normalization for the Thurston measure. This meant that it was not obvious how the arising constants should be understood (this problem was, to some extent, solved in \cite{Monin, RS}). Finally, or maybe finally for the time being, a unified proof of the counting theorem as stated above was provided in \cite{Book}. The constants $\FC^{\PMap}(\gamma_0)$ and $\FB^{\PMap}_{g,r}$ in the statement of the counting theorem above are as given in \cite[Chapter 8]{Book}, and the Thurston measure is defined to be the scaling limit
\begin{equation}\label{eq defintion thurston measure}
\FM^\Sigma_{\Thu}=\lim_{L\to\infty}\sum_{\gamma\in\CM\CL_\BZ}\delta_{\frac 1L\gamma}.
\end{equation}
Anyways, let us return to the concrete topic of this paper.

\subsection{Proof of Theorem \ref{main}}
We prove now our main theorem assuming Proposition \ref{main proposition}. As mentioned earlier, the proposition will be proved in Section \ref{sec:proof of proposition}.

\begin{named}{Theorem \ref{main}}
Let $\RO$ be a compact orientable non-exceptional hyperbolic orbifold with possibly empty totally geodesic boundary and let $\CC^{\orb}(\RO)$ be the associated space of geodesic currents. There is a Radon measure $\FM_{\Thu}$ on $\CC^{\orb}(\RO)$ such that for any $\gamma_0\in\CS^{\orb}(\RO)$ we have
$$\lim_{L\to\infty}\frac 1{L^{6g-6+2r}}\sum_{\gamma\in\Map^{\orb}(\RO)\cdot\gamma_0}\delta_{\frac 1L\gamma}=C(\gamma_0)\cdot\FM_{\Thu}$$
for some positive constant $C(\gamma_0)>0$. Here $g$ is the genus of the orbifold $\RO$, $r$ is the sum of the numbers of singular points and boundary components, and $\RO$ is non-exceptional if $(g,r)\neq(0,3)$. Moreover $\delta_{\frac{1}{L}\gamma}$ stands for the Dirac measure on $\CC^{\orb}(\RO)$ centered at $\frac 1L\gamma$, and the convergence takes place with respect to the weak-*-topology on the space of Radon measures on $\CC^{\orb}(\RO)$.
\end{named}

As we already mentioned in the introduction, the idea of the proof is to show that our given homotopy class $\gamma_0$ has a representative $\eta_0$ in $\Sigma$ such that the measures 
$$\frac 1{L^{6g-6+2r}}\sum_{\eta\in\PMap(\Sigma)\cdot\eta_0}\delta_{\frac 1L\eta}$$
inside the limit in Mirzakhani's counting theorem are supported by a closed subset of $\CC(\Sigma)$ which maps continuously to $\CC^{\orb}(\RO)$. In fact we will choose $\eta_0$ to be a representative of $\gamma_0$ which is as simple as possible.

Anyways, with $A$ as in Proposition \ref{main proposition} let
$$\CQ(A)\subset\CG(\Sigma)$$
be the set of unit speed geodesics $\alpha$ in $(\tilde\Sigma,\rho)$ with the property that the composition of the maps
$$\xymatrix{\BR\ar[r]^\alpha & (\tilde\Sigma,\rho)\ar[r] &(\hat\Sigma,\rho) \ar@{^{(}->}[r] & (\tilde\RO,\rho_{\hyp})}$$
is an $A$-quasigeodesic. In these terms, Proposition \ref{main proposition} asserts that if $\alpha:\BR\to(\tilde\Sigma,\rho)$ is a unit speed geodesic whose image in $(\hat\Sigma,\rho)$ is as simple as possible, then $\alpha\in\CQ(A)$. Recalling now that by Lemma \ref{lem banana} the homotopy class of every closed primitive and essential geodesic $\gamma_0$ in $\RO$ can be represented by a $\rho$-geodesic $\eta_0:\BS^1\to(\Sigma,\rho)$ which is as simple as possible, and that from Lemma \ref{lem as simple as possible mapping class group} we get that the property of being as simple as possible is mapping class group invariant, then we get the following fact that we state as a lemma for later reference:

\begin{lemma}\label{lem well chosen lift}
If $\eta_0:\BS^1\to(\Sigma,\rho)$ is any essential $\rho$-geodesic which when considered as a map into $\RO$ is as simple as possible, then the measure
$$\sum_{\eta\in\PMap(\Sigma)\cdot\eta_0}\delta_{\frac 1L}\eta$$
is supported by $\CQ(A)$ for all $L>0$.\qed
\end{lemma}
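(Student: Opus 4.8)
The plan is not to prove anything new but to assemble the pieces already in place. The measure in the statement is a sum of Dirac masses on the space of currents $\CC(\Sigma)$, one for each $\eta$ in the orbit $\PMap(\Sigma)\cdot\eta_0$, and saying it is ``supported by $\CQ(A)$'' means precisely that each such $\eta$, regarded as a counting current, is a measure on $\CG(\Sigma)$ concentrated on the subset $\CQ(A)\subset\CG(\Sigma)$. Rescaling a current by $\tfrac1L$ changes neither its support in $\CG(\Sigma)$ nor anything else relevant here, and $\CQ(A)$ is closed (a weak limit of $A$-quasigeodesics is an $A$-quasigeodesic), so it suffices to show that each counting current $\eta$ with $\eta\in\PMap(\Sigma)\cdot\eta_0$ is concentrated on $\CQ(A)$.

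Next I would fix $\eta=\phi(\eta_0)$ with $\phi\in\PMap(\Sigma)$ and invoke Lemma \ref{lem as simple as possible mapping class group}: since $\eta_0$ is as simple as possible, so is $\eta$. By definition this says the lifts to $\hat\Sigma$ of the $\rho$-geodesic representative of $\eta$ are as simple as possible $\rho$-geodesic lines. These lifts, transported through $\hat\Sigma\hookrightarrow(\tilde\RO,\rho_{\hyp})$, form exactly the support of the counting current $\eta$ in $\CG(\Sigma)$ (a single $\pi_1(\Sigma)$-orbit of geodesics in $(\tilde\Sigma,\rho)$), so I would then aim to apply Proposition \ref{main proposition} to each of them.

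The one hypothesis of Proposition \ref{main proposition} that is not immediate is that these lifts are already quasigeodesics in $(\tilde\RO,\rho_{\hyp})$; this is the (mild) obstacle, and I would dispatch it exactly as indicated in comment (1) after the proposition. The curve $\eta$ represents an infinite-order, non-peripheral class $g\in\pi_1^{\orb}(\RO)=\Gamma$, so $g$ acts on $\tilde\RO\subset\BH^2$ as a hyperbolic isometry; choosing the lift $\alpha$ to be invariant under the cyclic group $\langle g\rangle$, both $\alpha$ and the $\rho_{\hyp}$-axis of $g$ are $\langle g\rangle$-invariant and $\alpha/\langle g\rangle$ is compact, whence $\alpha$ stays within bounded $\rho_{\hyp}$-distance of that axis and is therefore a quasigeodesic in $(\tilde\RO,\rho_{\hyp})$. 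Proposition \ref{main proposition} then gives that $\alpha$ is $A$-quasigeodesic, i.e.\ $\alpha\in\CQ(A)$.

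Finally, since $\CQ(A)$ is $\pi_1(\Sigma)$-invariant (the maps $\tilde\Sigma\to\hat\Sigma\hookrightarrow\tilde\RO$ intertwine the $\pi_1(\Sigma)$-action on $\tilde\Sigma$ with the action of $\Gamma$ by $\rho_{\hyp}$-isometries on $\tilde\RO$), and the support of the current $\eta$ is a single $\pi_1(\Sigma)$-orbit of such lifts $\alpha$, the current $\eta$ is concentrated on $\CQ(A)$, which is what was needed. The only genuine care required is bookkeeping: reconciling the ``as simple as possible'' definitions for curves in $\Sigma$, in $\hat\Sigma$, and in $\tilde\Sigma$, and confirming that the support of the counting current is indeed this orbit of lifts. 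No new idea enters.
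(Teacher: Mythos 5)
Your argument is correct and is essentially the paper's own: invariance of ``as simple as possible'' under $\PMap(\Sigma)$ (Lemma \ref{lem as simple as possible mapping class group}), the observation that lifts of closed essential curves are automatically $\rho_{\hyp}$-quasigeodesics because they are invariant under a hyperbolic element of $\Gamma$ and stay at bounded distance from its axis (this is exactly comment (1) following Proposition \ref{main proposition}), and then Proposition \ref{main proposition} to place every lift, hence the support of each counting current, inside $\CQ(A)$. The extra bookkeeping you flag (scaling by $\tfrac 1L$ not changing supports, equivariance of the lift picture) is handled correctly and matches the paper's intended, essentially proofless, deduction.
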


Now, the fact that the quasigeodesic constant $A$ is fixed implies that $\CQ(A)$ is a closed subset of $\CG(\Sigma)$. Recall also that every $A$-quasigeodesic in $\tilde\RO$, and in particular the image under $\tilde\Sigma\to\hat\Sigma\hookrightarrow\tilde\RO$ of each element of $\CQ(A)$, is at bounded distance of a $\rho_{\hyp}$-geodesic where the bound just depends on $A$. In this way we get a continuous map
\begin{equation}\label{map L geodesics}
\CQ(A)\to\CG(\tilde\RO)
\end{equation}
equivariant under the homomorphism $\pi_1(\Sigma)\to\pi_1^{\orb}(\RO)=\Gamma$. Now, pushing currents forward with \eqref{map L geodesics} (at the end of the day currents are measures) we get a continuous map
\begin{equation}\label{map L geodesics currents}
\Pi:\{\lambda\in\CC(\Sigma)\text{ supported by }\CQ(A)\}\to\CC^{\orb}(\RO).
\end{equation}
from the closed subset of $\CC(\Sigma)$ consisting of currents supported by the closed set $\CQ(A)$ to the space of currents on $\RO$. 

The map $\Pi$ given in \eqref{map L geodesics currents} induces in turn a continuous map
\begin{equation}\label{map L geodesics currents measures}
\Pi_*:\text{measures  on }
\left\{
\begin{array}{c}
\lambda\in\CC(\Sigma)\\ \text{supported by }\CQ(A)
\end{array}
\right\}
\to\text{measures on }\CC^{\orb}(\RO)
\end{equation}
from the space of Radon measures on $\{\lambda\in\CC(\Sigma)\text{ supported by }\CQ(A)\}$ to the space of Radon measures on $\CC^{\orb}(\RO)$. 

For $\gamma_0$ as in the statement of Theorem \ref{main}, let $\eta_0$ be as provided by Lemma \ref{lem banana}, and note that we get from Lemma \ref{lem well chosen lift} that the measure $\sum_{\eta\in\Map(\Sigma)\eta_0}\delta_{\frac 1L}\eta$ is supported by the domain of \eqref{map L geodesics currents}. Its image under \eqref{map L geodesics currents measures} is in fact nothing other than the measure $\sum_{\gamma\in\Map^{\orb}(\RO)\cdot\gamma_0}\delta_{\frac 1L}\gamma$. Applying $\Pi_*$ to both sides of the limit in Mirzakhani's counting theorem we get:
\begin{align}\label{viv has a zoom meeting}
\nonumber \Pi_*\left(\frac{\FC^{\PMap}(\eta_0)}{\FB^{\PMap}_{g,r}}\cdot\FM^\Sigma_{\Thu}\right)
&=\Pi_*\left(\lim_{L\to\infty}\frac 1{L^{6g-6+2r}}\sum_{\eta\in\PMap(\Sigma)\cdot\eta_0}\delta_{\frac 1L\eta}\right)\\
&=\lim_{L\to\infty}\frac 1{L^{6g-6+2r}}\Pi_*\left(\sum_{\eta\in\PMap(\Sigma)\cdot\eta_0}\delta_{\frac 1L\eta}\right)\\
\nonumber&=\lim_{L\to\infty}\frac 1{L^{6g-6+2r}}\sum_{\gamma\in\PMap^{\orb}(\RO)\cdot\gamma_0}\delta_{\frac 1L\gamma}
\end{align}
Now the $\Map^{\orb}(\RO)$-orbit of $\gamma_0$ is a disjoint union of orbits under $\PMap^{\orb}(\RO)$, more precisely of 
$\frac{|\Map^{\orb}(\RO)/\PMap^{\orb}(\RO)|}{|\Stab_{\Map^{\orb}(\RO)}(\gamma_0)/\Stab_{\PMap^{\orb}(\RO)}(\gamma_0)\vert}$ orbits. Theorem \ref{main} follows when we apply \eqref{viv has a zoom meeting} to each one of these orbits and we set 

$$C(\gamma_0)=\frac{|\Map^{\orb}(\RO)/\PMap^{\orb}(\RO)|}{|\Stab_{\Map^{\orb}(\RO)}(\gamma_0)/\Stab_{\PMap^{\orb}(\RO)}(\gamma_0)\vert}\cdot \frac{\FC^{\PMap}(\eta_0)}{\FB^{\PMap}_{g,r}}$$
and
$$\FM_{\Thu}=\Pi_*\left(\FM^\Sigma_{\Thu}\right).$$
We have proved Theorem \ref{main}.\qed
\medskip

Rather, we have proved Theorem \ref{main} while assuming Proposition \ref{main proposition}. Anyways, before proving the proposition let us prove the other theorems mentioned in the introduction and comment briefly on the measure $\FM_{\Thu}$.

\subsection{A comment on the measure $\FM_{\Thu}$}\label{sec thurston measure}
In the course of the proof of Theorem \ref{main} we identified the measure $\FM_{\Thu}$ as the push forward of the Thurston measure $\FM_{\Thu}^\Sigma$ associated to $\Sigma$ under the map $\Pi_*$. We give now a slightly more intrinsic interpretation of this measure. Note first that every simple essential geodesic in $\Sigma$ is as simple as possible in $\RO$. This means that the set $\BR_{\ge 0}\cdot \CM\CL_\BZ(\Sigma)$ of multiples of integral measured laminations on $\Sigma$ is contained in $\CQ(A)$. Since the latter is closed we also have that the full space of measured laminations on $\Sigma$ is contained in $\CQ(A)$, that is $\CM\CL(\Sigma)\subset\CQ(A)$. We thus get from the construction \eqref{eq defintion thurston measure} of the Thurston measure $\FM_{\Thu}^\Sigma$ that
\begin{equation}\label{eq bamboleo 1}
\FM_{\Thu}=\lim_{L\to\infty}\frac 1{L^{6g-6+2r}}\sum_{\gamma\in\CM\CL_\BZ(\Sigma)}\delta_{\frac 1L\hat \gamma}
\end{equation}
where, for lack of better notation, we let $\hat\gamma$ be the geodesic representative in $\RO$ of the homotopy class represented by $\gamma\in\CM\CL_\BZ(\Sigma)$. The multicurve curve $\hat\gamma$ is simple in the sense that its lifts to the universal cover $\tilde\RO$ never cross each other. If we denote by $\CM\CL_\BZ(\RO)$ the set of, in this sense, simple geodesic multicurves in $\RO$ then we can rewrite \eqref{eq bamboleo 1} as
\begin{equation}\label{eq bamboleo 2}
\FM_{\Thu}=\lim_{L\to\infty}\frac 1{L^{6g-6+2r}}\sum_{\hat{\gamma}\in\CM\CL_\BZ(\RO)}\delta_{\frac 1L\hat{\gamma}}
\end{equation}
Yet another description of $\FM_{\Thu}$ can be given when we recall that $\RO$ has a finite normal cover (in the category of orbifolds) which is a surface. This means that there are a hyperbolic surface $S$ and a finite group $H$ acting on $S$ by isometries such that $\RO=S/H$. The cover $\pi:S\to\RO$ induces a bijection between elements in $\CM\CL_\BZ(\RO)$ and the set $\CM\CL_\BZ(S)^H$ of $H$-invariant simple multicurves in $S$. %This means that 
%\begin{equation}\label{eq bamboleo 3}
%\FM_{\Thu}=\lim_{L\to\infty}\frac 1{L^{6g-6+2r}}\sum_{\gamma\in\CM\CL_\BZ(S)^H}\delta_{\frac 1L\pi(\gamma)}.
%\end{equation}
The reader familiar with the construction of the usual Thurston measure for surfaces will have no difficulty proving that the limit
$$\FM_{\Thu}^{S,H}\stackrel{\text{def}}=\lim_{L\to\infty}\frac 1{L^{6g-6+2r}}\sum_{\gamma\in\CM\CL_\BZ(S)^H}\delta_{\frac 1L\gamma}$$
exists, where $g$ and $r$ are still the genus and the sum of the numbers of singular points and boundary components of the orbifold $\RO$. Taking into account the identification between $\CM\CL_{\mathbb{Z}}(\RO)$ and $\CM\CL_{\mathbb{Z}}(S)^H$ we get 
$$\FM_{\Thu}=\FM_{\Thu}^{S,H}.$$
Here the left measure lives in the space $\CG(\tilde\RO)$ of geodesics on the universal cover of $\RO$, and the right one lives in the space $\CG(\tilde S)$ of geodesics in the universal cover of $S$, and where the equality makes sense because $\tilde\RO=\tilde S$.

\begin{bem}
It also seems probable that one can recover the measure $\FM_{\Thu}^{S,H}$ as a multiple of the measure on $\CM\CL(S)^H$ obtained by taking a suitable power of the restriction to that subspace of the Thurston symplectic form on $\CM\CL(S)$. It would be interesting to do as in \cite{Monin} and figure out the precise multiple.
\end{bem}

Anyways, the reader having just the present paper in mind can ignore these past comments and just continue thinking of $\FM_{\Thu}$ as given in the proof of Theorem \ref{main}. 

\subsection{Actually counting curves}
Now we prove Theorem \ref{sat1} and Theorem \ref{sat3} from the introduction. Let us start with the latter:

\begin{named}{Theorem \ref{sat3}}
Let $\RO$ be a compact orientable hyperbolic orbifold with possibly empty totally geodesic boundary and let $\CC^{\orb}(\RO)$ be the associated space of geodesic currents. Then the limit
$$\lim_{L\to\infty}\frac 1{L^{6g-6+2r}}\vert\{\gamma\text{ of type }\gamma_0\text{ with }F(\gamma)\le L\}\vert$$
exists and is positive for any $\gamma_0\in\CS^{\orb}(\RO)$ and any positive, homogenous, continuous function $F:\CC^{\orb}(\RO)\to\BR_{\ge 0}$. Here $g$ is the genus of the orbifold $\RO$, $r$ is the sum of the numbers of singular points and boundary components.
\end{named}

The proof of this theorem is the same as that of the analogous result in the case of surfaces \cite{EPS,Book,RS} but let us recap the argument anyways. Other than the fact that $\FM_{\Thu}$ is a Radon measure, and as such locally finite, we will also need that 
\begin{equation}\label{chichu is enjoying the great outside}
\FM_{\Thu}(t\cdot U)=t^{6g-6+2r}\FM_{\Thu}(U)
\end{equation}
for all $U\subset\CC^{\orb}(\RO)$ and all $t\ge 0$. This equality holds true because it does so for the standard Thurston measure $\FM_{\Thu}^\Sigma$ and because the map \eqref{map L geodesics currents} is homogeneous: $\Pi(t\cdot\lambda)=t\cdot\Pi(\lambda)$. Alternatively \eqref{chichu is enjoying the great outside} also follows directly from \eqref{eq bamboleo 2}. Anyways, we are now ready to prove the theorem:

\begin{proof}
Noting that there is nothing to prove if $\RO$ is exceptional, suppose that this is not the case. 

For any such homogenous function $F:\CC^{\orb}(\RO)\to\BR$ we have
\begin{align*}
\frac {\vert\{\gamma\text{ of type }\gamma_0\text{ with }F(\gamma)\le L\}\vert}{L^{6g-6+2r}}
&=\left(\frac 1{L^{6g-6+2r}}\sum_{\gamma\in\Map^{\orb}(\RO)\cdot\gamma_0}\delta_{\gamma}\right)\left(\{F\le L\}\right)\\
&=\left(\frac 1{L^{6g-6+2r}}\sum_{\gamma\in\Map^{\orb}(\RO)\cdot\gamma_0}\delta_{\frac 1L\gamma}\right)\left(\{F\le 1\}\right)
\end{align*}
Now, by Theorem \ref{main} the measures in the last line converge, when $L\to\infty$, to the measure $C(\gamma_0)\cdot\FM_{\Thu}$. Noting that local finiteness of $\FM_{\Thu}$ together with \eqref{chichu is enjoying the great outside} implies that
$$C(\gamma_0)\cdot\FM_{\Thu}\left(\{F=1\}\right)=0.$$
we get that 
$$C(\gamma_0)\cdot\FM_{\Thu}\left(\{F\le 1\}\right)=\lim_{L\to\infty}\left(\frac 1{L^{6g-6+2r}}\sum_{\gamma\in\Map^{\orb}(\RO)\cdot\gamma_0}\delta_{\frac 1L\gamma}\right)\left(\{F\le 1\}\right).$$
Taking all of this together we obtain that
$$C(\gamma_0)\cdot\FM_{\Thu}\left(\{F\le 1\}\right)=\lim_{L\to\infty}\frac 1{L^{6g-6+2r}}\vert\{\gamma\text{ of type }\gamma_0\text{ with }F(\gamma)\le L\}\vert,$$
and we are done.
\end{proof}

We come now to Theorem \ref{sat1}:

\begin{named}{Theorem \ref{sat1}}
Let $\Gamma\subset\PSL_2\BR$ be a non-elementary finitely generated discrete subgroup and $\RO=\BH^2/\Gamma$ the associated 2-dimensional hyperbolic orbifold. Then the limit
$$\lim_{L\to\infty}\frac 1{L^{6g-6+2r}}\vert\{\gamma\text{ of type }\gamma_0\text{ with }\ell_{\RO}(\gamma)\le L\}\vert$$
exists and is positive for any $\gamma_0\in\CS^{\orb}(\RO)$. Here $g$ is the genus of the orbifold $\RO$ and $r$ is the sum of the numbers of singular points and ends.
\end{named}

\begin{proof}
We might once again assume that $\RO$ is not exceptional. Let then $\bar\RO$ be a compact hyperbolic orbifold with interior homeomorphic to $\RO$, consider $\gamma_0\in\CS^{\orb}(\RO)$ as an element in $\CS^{\orb}(\bar\RO)$, and apply Theorem \ref{main} to get
\begin{equation}\label{fefito is a big problem}
\lim_{L\to\infty}\frac 1{L^{6g-6+2r}}\sum_{\gamma\in\Map^{\orb}(\bar\RO)\cdot\gamma_0}\delta_{\frac 1L\gamma}=C(\gamma_0)\cdot\FM_{\Thu}.
\end{equation}
Now note that the same argument that proves it for surfaces shows that there is a compact subset $K\subset\bar\RO\setminus\D\bar\RO$ which contains the geodesic $\phi(\gamma_0)$ for all $\phi\in\Map^{\orb}(\bar\RO)$ (see for example \cite{EPS,Book}). It follows that the measures in \eqref{fefito is a big problem} are all supported by the set $\CC_K^{\orb}(\bar\RO)$ of currents in $\CC^{\orb}(\RO)$ whose support projects to a subset of $K$. Now, as was first proved by Bonahon \cite{BonahonFrench} (see also \cite[Exercise 3.9]{Book}) we have that hyperbolic length function $\ell_{\RO}$ extends continuously to $\CC_K^{\orb}(\bar\RO)$. The claim of Theorem \ref{sat1} follows when we repeat word-by-word the argument in the proof of Theorem \ref{sat3}.
\end{proof}

\section{The key observations}\label{sec:lemmas}

In this section we get the tools needed to prove Proposition \ref{main proposition} in the next section. Notation will be as in the proposition: $\RO=\tilde{\RO}/\Gamma$ is a compact orientable hyperbolic orbifold with possibly empty totally geodesic boundary, 
$$\hat\Sigma=\tilde\RO\setminus\CN_{\hyp}(\sing(\Gamma),\delta)$$ 
is as in \eqref{eq associated surface}, and $\rho$ is the metric on $\hat{\Sigma}$ constructed in Section \ref{sec:metric}.

\subsection{Choosing $\delta$} 
So far, the only condition we have imposed on $\delta>0$ is that it is smaller than $\frac 13\epsilon$ where $\epsilon>0$ satisfies conditions (C1), (C2) and (C3) from Section \ref{sec surface associated to orbifold}. We are momentarily going to give a more stringent condition on $\delta$, but first recall that the convex hull of a connected set $X$ in a negatively curved manifold is the smallest closed connected set $X'$ with the following property: {\em any path in $X$ is homotopic relative to its endpoints to a geodesic path contained in $X'$}. With this language we fix $\delta$ so that, with $\epsilon>0$ as fixed in Section \ref{sec surface associated to orbifold}, the following holds whenever $\gamma\subset\tilde\RO$ is a $\rho_{\hyp}$-geodesic segment:
\begin{equation}
  \tag{C4}\label{star1}
  \parbox{\dimexpr\linewidth-4em}{%
    \strut
   If $r\ge 50\epsilon$ and if $p\in\tilde{\RO}$ is such that $d_{\hyp}(p,\CN_{\hyp}(\gamma,r))\le2\delta$ then the $\rho_{\hyp}$-convex hull of $\CN_{\hyp}(\gamma,r)\cup B_{\hyp}(p,2\delta)$ is contained in $\CN_{\hyp}(\gamma,r)\cup B_{\hyp}(p,\frac\epsilon 2)$.
    \strut
  }
\end{equation}
The lower bound on $r$ guarantees that $\CN_{\hyp}(\gamma,r)$ is uniformly convex. In particular, the existence of such a $\delta$ is evident when one considers the limit case $\delta=0$. In any case, a computation shows that any $\delta<(\frac \epsilon 4)^3$ works. See Figure \ref{fig convex hull} for a schematic representation of (C4).
\medskip

\begin{figure}[h]
\includegraphics[width=0.6\textwidth]{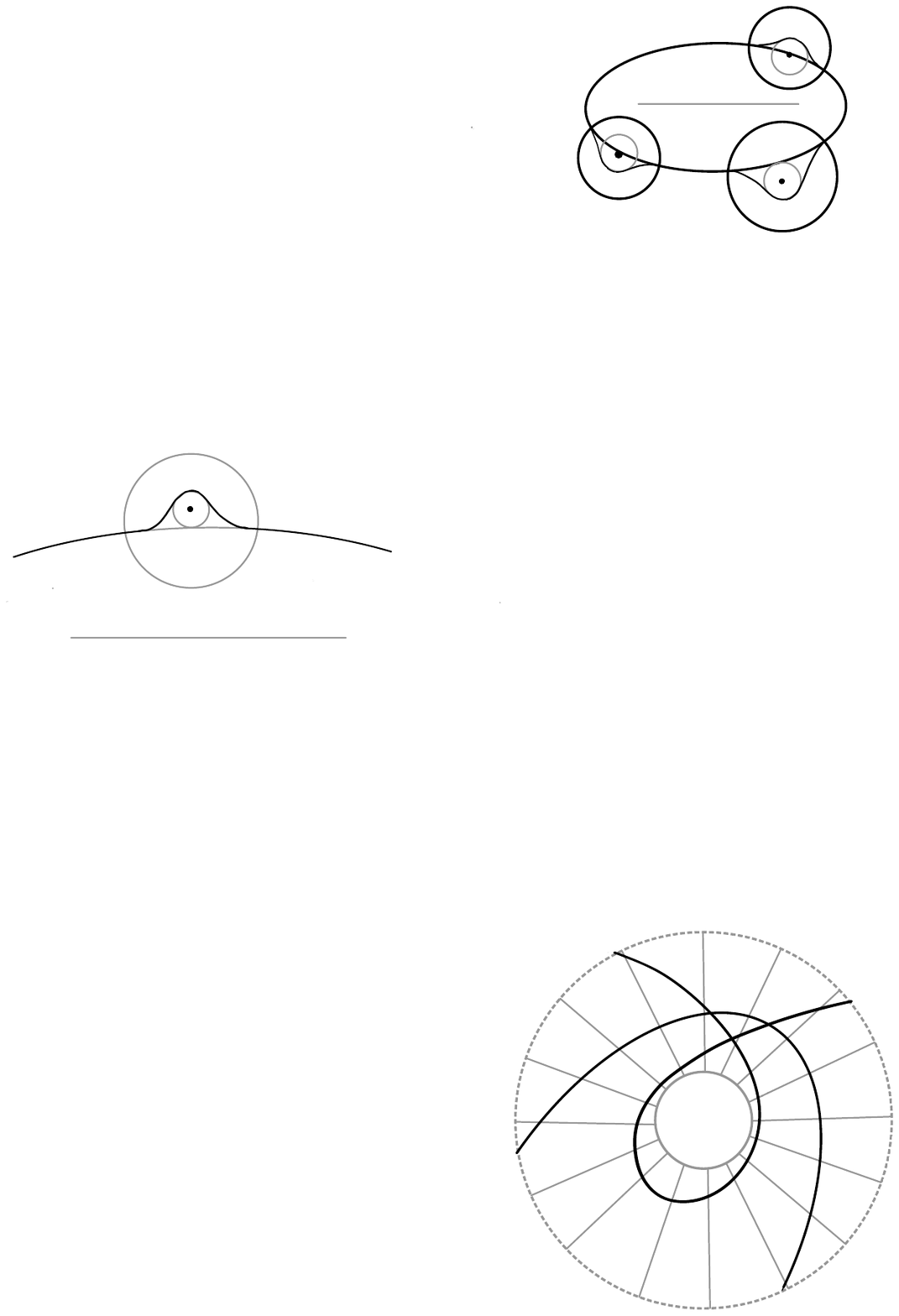}
\caption{Schematic representation of (C4). If a point is close enough to the boundary of the uniformly convex set $\CN_{\hyp}(\gamma,r)$ then its distance to the horizon is really small. The darker line represents the boundary of the convex hull of $\CN_{\hyp}(\gamma,r)\cup B_{\hyp}(p,2\delta)$.}
\label{fig convex hull}
\end{figure}

The reason for imposing (C4) will be clear shortly, but first we need some more notation. If $\gamma\subset\tilde\RO$ is a $\rho_{\hyp}$-geodesic (always compact) segment, consider for $r>0$ the set
\begin{equation}\label{I like pink martini}
\CP(\gamma,r)=\{p\in\sing(\Gamma)\text{ with }d_{\hyp}(p,\gamma)\le r+2\delta\}
\end{equation}
of points which are at most at distance $2\delta$ from the $r$-neighborhood $\CN_{\hyp}(\gamma,r)$ around $\gamma$, and for $t>0$ let 
\begin{equation}\label{a lot}
\CU(\gamma,r,t)=\CN_{\hyp}(\gamma,r)\cup\left(\bigcup_{p\in\CP(\gamma, r)}B_p(t,d_{\hyp})\right)
\end{equation}
be the union of that $r$-neighborhood and the $t$-balls around each point in $\CP(\gamma,r)$. The following lemma gives us, for $t=2\delta$, some control of the convex hull of this set with respect to the metric $\rho$:

\begin{lemma}\label{lem11}
If $\gamma\subset\tilde{\RO}$ is a $\rho_{\hyp}$-geodesic segment then we have
$$\left(\rho\text{-convex hull of }\CU(\gamma,r,2\delta)\cap\hat{\Sigma}\right)\subset\left(\CU\left(\gamma,r,\frac\epsilon 2\right)\cap\hat{\Sigma}\right)$$
for all $r\ge 50\epsilon$. Here $\CU(\gamma,r,t)$ is as in \eqref{a lot}.
\end{lemma}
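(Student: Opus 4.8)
The statement is a localized, $\rho$-version of condition (C4). The idea is to work one singular point at a time and patch. First I would observe that $\CU(\gamma,r,2\delta)\cap\hat\Sigma$ differs from $\CN_{\hyp}(\gamma,r)\cap\hat\Sigma$ only inside the $2\delta$-balls around the points of $\CP(\gamma,r)$, and by (C2), (C3) together with the hypothesis $r\ge 50\epsilon$ (which via (C4) forces these balls to have pairwise disjoint $\frac\epsilon2$-enlargements and to stay away from $\D\tilde\RO$), these finitely many balls are ``independent'' of one another: any $\rho$-geodesic arc realizing the convex hull that enters one such ball does so in a sub-arc disjoint from all the others. So it suffices to analyze the convex hull of $\CN_{\hyp}(\gamma,r)\cup B_{\hyp}(p,2\delta)$ intersected with $\hat\Sigma$, for a single $p\in\CP(\gamma,r)$, and show it lies in $\CN_{\hyp}(\gamma,r)\cup B_{\hyp}(p,\frac\epsilon2)$.

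\textbf{The local argument.} Fix such a $p$. Recall that $\rho$ and $\rho_{\hyp}$ agree on $\tilde\RO\setminus\CN_{\hyp}(\sing(\Gamma),2\delta)$, and in particular on $B_{\hyp}(p,r')\setminus B_{\hyp}(p,2\delta)$ for any $r'$, while inside $B_{\hyp}(p,2\delta)$ the metric $\rho$ is the rotationally symmetric, negatively curved metric \eqref{go buy batteries} with the inner boundary sphere $\D B_{\hyp}(p,\delta)$ totally geodesic and $B_{\hyp}(p,\delta)$ removed. The point is that passing from $\rho_{\hyp}$ to $\rho$ can only make the region ``more convex'': any $\rho$-geodesic arc with endpoints on $\D B_{\hyp}(p,\tfrac\epsilon2)$ (say) that would dip below radius $2\delta$ cannot do worse than the corresponding $\rho_{\hyp}$-geodesic, because replacing the $\rho_{\hyp}$-metric by $\rho$ only affects the annulus $B_{\hyp}(p,2\delta)\setminus B_{\hyp}(p,\delta)$, where the fourth property of $\rho$ (the radial segments are $\rho$-geodesics) plus Lemma \ref{lem:meet rays once} (a simple $\rho$-geodesic crosses each radial ray at most once) control how far a $\rho$-geodesic can travel angularly; and the concave hole $B_{\hyp}(p,\delta)$ is convex on the outside, so geodesics wrap around it staying within the same radius. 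Combining this with the plain hyperbolic statement (C4) applied with that same $p$ — which tells us the $\rho_{\hyp}$-convex hull of $\CN_{\hyp}(\gamma,r)\cup B_{\hyp}(p,2\delta)$ is inside $\CN_{\hyp}(\gamma,r)\cup B_{\hyp}(p,\frac\epsilon2)$ — gives that no $\rho$-geodesic arc joining two points of $\CN_{\hyp}(\gamma,r)\cup B_{\hyp}(p,2\delta)\cap\hat\Sigma$ leaves $\CN_{\hyp}(\gamma,r)\cup B_{\hyp}(p,\frac\epsilon2)$. Then re-assembling over all $p\in\CP(\gamma,r)$ and using the disjointness noted above yields the claim.

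\textbf{Main obstacle.} The delicate step is the comparison ``$\rho$ is at least as convex as $\rho_{\hyp}$ near $p$'': I need to be careful that deleting the ball $B_{\hyp}(p,\delta)$ does not let $\rho$-geodesics sneak radially inward and thereby escape the $\frac\epsilon2$-ball in some other direction. This is exactly what the monotonicity of the radial projection on simple $\rho$-geodesic segments (the discussion preceding Lemma \ref{lem:meet rays once}) is for: a $\rho$-geodesic arc realizing a convex-hull boundary is simple, hence meets each radial ray out of $p$ at most once, hence its radial coordinate is unimodal, so once it is inside $B_{\hyp}(p,2\delta)$ it exits through $\D B_{\hyp}(p,2\delta)$ without having increased beyond its entry radius; thus it never reaches radius $\frac\epsilon2$ from inside. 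I would also want to double-check that $3\delta<\epsilon$ and $\delta<(\frac\epsilon4)^3$ leave enough room that $B_{\hyp}(p,2\delta)\subset B_{\hyp}(p,\frac\epsilon2)$ with margin, and that $\CN_{\hyp}(\gamma,r)$ itself is genuinely $\rho_{\hyp}$-convex (uniformly so, from $r\ge 50\epsilon$) so that intersecting with it causes no trouble — both of which are routine but should be stated explicitly.
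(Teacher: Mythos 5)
Your plan correctly identifies the ingredients ((C4), disjointness of the balls around the points of $\CP(\gamma,r)$ coming from (C2), the fact that $\rho=\rho_{\hyp}$ away from $\CN_{\hyp}(\sing(\Gamma),2\delta)$, and Lemma \ref{lem:meet rays once}), but the central step---transferring the hyperbolic statement (C4) to the metric $\rho$---is not actually carried out, and the mechanism you propose for it does not do the job. First, recall that the convex hull here is defined homotopically: the smallest closed connected set such that every path in the given set is homotopic rel endpoints to a geodesic path inside it. So the assertion you aim for, ``no $\rho$-geodesic arc joining two points of $\CN_{\hyp}(\gamma,r)\cup B_{\hyp}(p,2\delta)$ leaves $\CN_{\hyp}(\gamma,r)\cup B_{\hyp}(p,\frac\epsilon2)$'', is not the statement that bounds the hull and, taken literally for arcs in arbitrary homotopy classes, it is false (an arc in a class winding around the boundary circle of $\hat\Sigma$ at a singular point far from $\gamma$ joins two points of the tube but does not stay in $\CU(\gamma,r,\frac\epsilon2)$). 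Second, and more seriously, the convexity comparison near $p$ misuses Lemma \ref{lem:meet rays once}: that lemma and the monotonicity of the radial projection control the \emph{angular} coordinate of a \emph{simple} $\rho$-geodesic segment with endpoints on the outer circle; they say nothing about radial unimodality, and you never justify that the arcs you consider are simple (``realizing a convex-hull boundary'' is not a defined notion). Moreover the conclusion you draw---that the arc exits $\D B_{\hyp}(p,2\delta)$ ``without having increased beyond its entry radius, thus never reaches radius $\frac\epsilon2$ from inside''---is vacuous (inside $B_{\hyp}(p,2\delta)$ the radius is at most $2\delta$ by definition) and misses the real danger: after crossing the modified annulus $B_{\hyp}(p,2\delta)\setminus B_{\hyp}(p,\delta)$, possibly wrapping partway around the totally geodesic circle $\D B_{\hyp}(p,\delta)$, the arc re-emerges as a $\rho_{\hyp}$-geodesic in a direction that no $\rho_{\hyp}$-geodesic between the same endpoints would take, so (C4) gives no control over it; a priori it could then leave $\CN_{\hyp}(\gamma,r)\cup B_{\hyp}(p,\frac\epsilon2)$ or reach the $2\delta$-ball of a singular point not in $\CP(\gamma,r)$, and your patching step silently assumes this cannot happen.

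The paper's proof avoids analyzing $\rho$-geodesics in the modified region altogether by producing an intermediate \emph{convex} set, which is exactly what your argument lacks: for each $p\in\CP(\gamma,r)$ let $Y_p$ be the $\rho_{\hyp}$-convex hull of $\CN_{\hyp}(\gamma,r)\cup B_{\hyp}(p,2\delta)$; by (C4) the pieces $X_p=Y_p\setminus\CN_{\hyp}(\gamma,r)$ lie in the pairwise disjoint balls $B_{\hyp}(p,\frac\epsilon2)$, so by the patching observation \eqref{star2} the set $X=\CN_{\hyp}(\gamma,r)\cup\bigcup_pX_p$ is $\rho_{\hyp}$-convex, and by construction $\D X$ is contained in $\CN_{\hyp}(\gamma,r)\cup\bigcup_p\bigl(B_{\hyp}(p,\frac\epsilon2)\setminus B_{\hyp}(p,2\delta)\bigr)$, i.e.\ entirely in the region where $\rho$ and $\rho_{\hyp}$ agree. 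Hence $\D X$ is locally $\rho$-convex as well, and since the circles $\D B_{\hyp}(p,\delta)$ are $\rho$-totally geodesic, $X\cap\hat\Sigma$ is a $\rho$-convex set squeezed between $\CU(\gamma,r,2\delta)\cap\hat\Sigma$ and $\CU(\gamma,r,\frac\epsilon2)\cap\hat\Sigma$, which immediately bounds the hull. Note also that the target set $\CU(\gamma,r,\frac\epsilon2)$ itself is \emph{not} convex (there are concave corners where the $\frac\epsilon2$-balls meet the tube), so no ``first exit'' argument against it can work; any repair of your approach has to construct such an intermediate locally convex set whose boundary avoids the region where the metric was changed.
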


Before launching the proof recall that convexity of a closed set $X$ is a local property of its boundary. We thus get the following useful property:
\begin{equation}
  \tag{*}\label{star2}
  \parbox{\dimexpr\linewidth-4em}{%
    \strut
    If $(X_0,X_1,\cdots)$ is a countable locally finite collection of closed subsets of a negatively curved manifold with $X_0\cup X_i$ convex for all $i$ and with $X_i\cap X_j=\emptyset$ for all $i\neq j\ge 1$, then $\cup_{i=0}^\infty X_i$ is convex.
       \strut
  }
\end{equation}

\begin{figure}[h]
\includegraphics[width=0.4\textwidth]{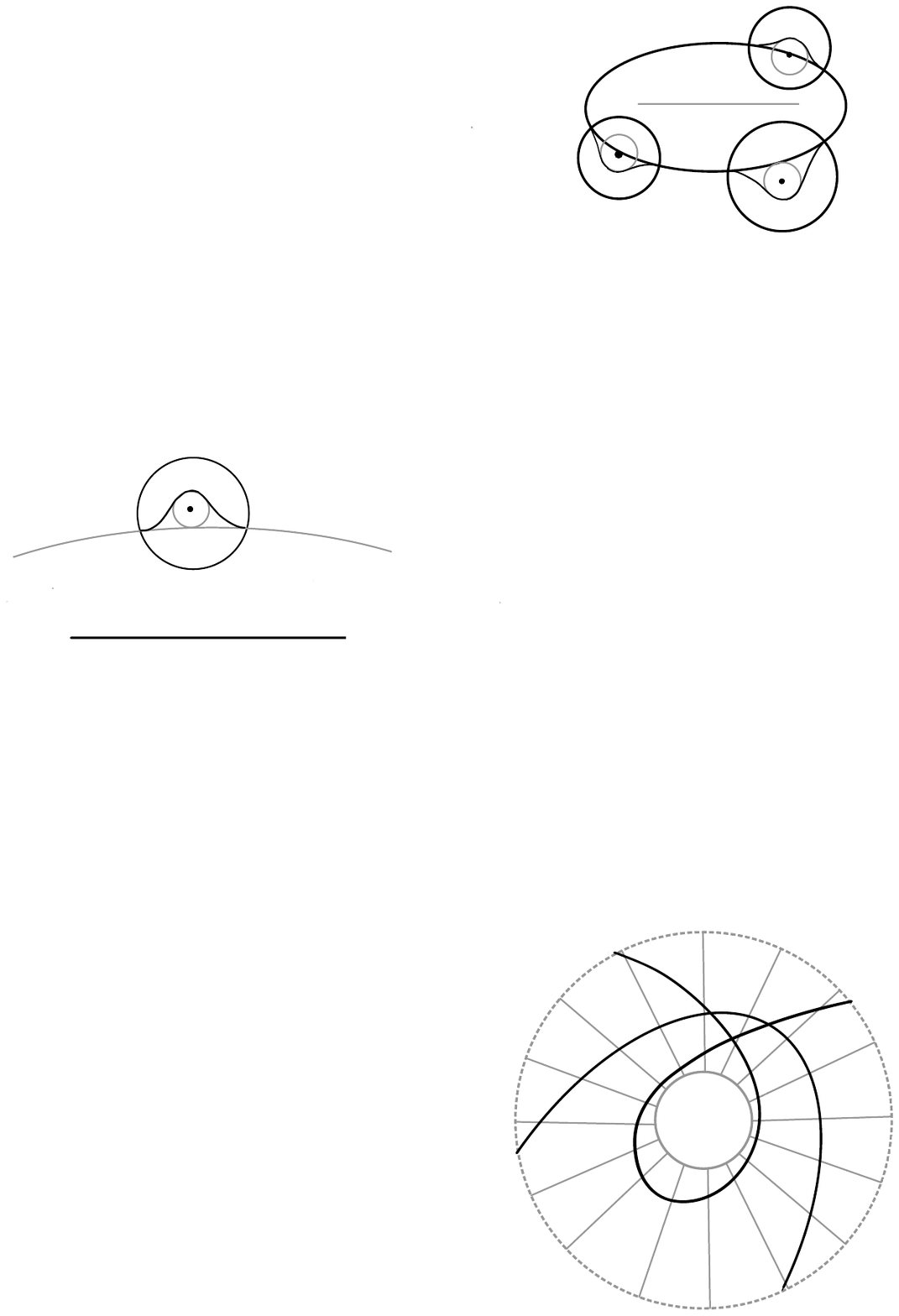}
\caption{Schematic representation of Lemma \ref{lem11}.}
\label{fig neighborhood}
\end{figure}

We now prove the lemma. 

\begin{proof}
Set $X_0=\CN_{\hyp}(\gamma,r)$ and for $p\in\CP(\gamma,r)$ let $Y_p$ be the $\rho_{\hyp}$-convex hull of the union of $\CN_{\hyp}(\gamma,r)\cap\hat\Sigma$ and $B_{\hyp}(p,2\delta)$. Since $r\ge 50\epsilon$ then we get from (C4) that $X_p=Y_p\setminus X_0\subset B_{\hyp}(p,\frac\epsilon 2)$. This implies that the collection of sets $\{X_p\text{ with }p\in\CP(\gamma,r)\}$ is locally finite and that $X_p\cap X_q=\emptyset$ for all distinct $p,q\in\CP(\gamma, r)$. We get thus from (*) that 
$$X=X_0\cup\left(\bigcup_{p\in\CP(\gamma,r)}X_p\right)$$ 
is $\rho_{\hyp}$-convex, meaning that its boundary is $\rho_{\hyp}$-convex. However we have by construction that 
$$\D X\subset\CN_{\hyp}(\gamma,r)\cup\left(\bigcup_{p\in\CP(\gamma,r)}\left(B_{\hyp}(p,\frac\epsilon 2)\setminus B_{\hyp}(p,2\delta)\right)\right),$$
which means that $\D X$ is not only contained in $\hat\Sigma$ but even contained in the part of $\hat\Sigma$ where the metrics $\rho$ and $\rho_{\hyp}$ agree. This means that $\D X$ is not only $\rho_{\hyp}$-convex but also $\rho$-convex. It follows that $X\cap\hat\Sigma$ is a $\rho$-convex set containing $\CU(\gamma,r,2\delta)\cap\hat{\Sigma}$ but contained in $\CU\left(\gamma,r,\frac\epsilon 2\right)\cap\hat{\Sigma}$. The claim follows.
\end{proof}

\subsection{Heights and outgoing rays}

Continuing with the same notation, let $\gamma\subset\tilde{\RO}$ be a $\rho_{\hyp}$-geodesic segment and let $p\in\tilde{\RO}\setminus\gamma$ be a point not on $\gamma$. Under the {\em $\gamma$-outgoing ray at $p$} we understand the $\rho_{\hyp}$-geodesic ray starting at $p$ in the direction of the gradient of the function $d_{\hyp}(\gamma,\cdot)$---that is, the ray that $p$ would follow to escape from $\gamma$ at the fastest possible rate.  

Now let $\eta$ be a $\rho$-geodesic segment in $\hat{\Sigma}$ whose endpoints lie on $\gamma$. The {\em $\gamma$-height of $\eta$}
$$h_\gamma(\eta)=\max\left\{d_{\hyp}(\gamma,p)\ \middle\vert
\begin{array}{l}
\text{ where }p\in\sing(\Gamma)\setminus\gamma\text{ is such that}\\
\text{  the }\gamma\text{-outgoing ray at }p\text{ meets }\eta
\end{array}
\right\}$$
is the maximum hyperbolic distance from $\gamma$ to a cone point $p\in\sing(\Gamma)$ whose $\gamma$-outgoing ray intersects $\eta$---here we take $h_\gamma(\eta)=0$ if we are taking the maximum over the empty set. 

The following lemma asserts that the $\gamma$-height of $\eta$ agrees, up to a small error, with the maximal $d_{\hyp}$-distance to $\gamma$ from points in $\eta$.

\begin{lemma}\label{l:neighborhood} 
Let $\gamma\subset\tilde{\RO}$ and $\eta\subset\hat{\Sigma}$ be a $\rho_{\hyp}$-geodesic segment and a $\rho$-geodesics segment, both with the same endpoints. Then we have
$$\eta\subset\CN_{\hyp}(\gamma,r)$$
where $r=\max\{50\epsilon,h_\gamma(\eta)\}+\epsilon$.
\end{lemma}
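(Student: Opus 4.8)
The plan is to show that $\eta$ cannot escape the neighborhood $\CN_{\hyp}(\gamma,r)$ by running a convexity argument against Lemma \ref{lem11}. First I would set $r=\max\{50\epsilon,h_\gamma(\eta)\}+\epsilon$ and note that it suffices to prove $\eta\subset\CN_{\hyp}(\gamma,r'')$ for $r''$ slightly smaller, say $r''=r-\frac\epsilon2$; so really I want to show $\eta$ stays inside $\CU(\gamma,r'',\frac\epsilon2)\cap\hat\Sigma$ for a suitable choice, because Lemma \ref{lem11} tells us that such sets are $\rho$-convex and $\eta$, being a $\rho$-geodesic segment with endpoints on $\gamma\subset\CN_{\hyp}(\gamma,r'')$, must remain in any $\rho$-convex set containing its endpoints. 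The key point is that $\eta$ cannot be trapped inside $\CN_{\hyp}(\gamma,r'')$ alone (it could a priori be forced to detour around a cone point sticking out of that neighborhood), so I need to enlarge $\CN_{\hyp}(\gamma,r'')$ by small balls around exactly those cone points that matter, and then invoke the convexity of the enlarged set.

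The heart of the argument is the following claim: every cone point $p\in\CP(\gamma,r'')$ — i.e. every $p\in\sing(\Gamma)$ with $d_{\hyp}(p,\gamma)\le r''+2\delta$ — actually satisfies $d_{\hyp}(p,\gamma)\le h_\gamma(\eta)$, or at least is irrelevant. Here is where the $\gamma$-outgoing ray enters: if $p$ has $d_{\hyp}(p,\gamma)> h_\gamma(\eta)$, then by definition of the height the $\gamma$-outgoing ray at $p$ does not meet $\eta$. I would argue that such a $p$ — sitting just beyond the $r''$-neighborhood but with its escape ray missing $\eta$ — cannot obstruct $\eta$: the $\delta$-ball removed around $p$ (the piece of $\hat\Sigma$ missing near $p$) lies on the far side of $\eta$ relative to $\gamma$, so $\eta$ genuinely stays inside $\CN_{\hyp}(\gamma,r'')$ near $p$ without needing to route around the deleted ball. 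Conversely, for cone points $p$ with $d_{\hyp}(p,\gamma)\le h_\gamma(\eta)\le r-\epsilon=r''+\frac\epsilon2$, the ball $B_{\hyp}(p,\frac\epsilon2)$ is contained in $\CN_{\hyp}(\gamma,r''+\epsilon)=\CN_{\hyp}(\gamma,r)$, so adding these balls keeps us inside the target neighborhood. Thus $\eta$ is contained in the $\rho$-convex set $\CU(\gamma,r'',\frac\epsilon2)\cap\hat\Sigma$ provided by Lemma \ref{lem11} (applicable since $r''\ge 50\epsilon$), and this set sits inside $\CN_{\hyp}(\gamma,r)$; combining, $\eta\subset\CN_{\hyp}(\gamma,r)$.

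The main obstacle I anticipate is making rigorous the dichotomy in the claim above — precisely, showing that a cone point whose $\gamma$-outgoing ray misses $\eta$ really does not force $\eta$ outside $\CN_{\hyp}(\gamma,r'')$. This is a statement about the local geometry near the deleted $\delta$-ball $B_{\hyp}(p,\delta)$: one must argue that the component of $\hat\Sigma\setminus\CN_{\hyp}(\gamma,r'')$ near $p$ lies ``behind'' the ray, i.e. is separated from $\gamma$ by the radial foliation picture, so that a $\rho$-geodesic with both endpoints on $\gamma$ and staying $\rho$-convex-hull-confined cannot wander there. I would handle this using the radial-foliation monotonicity from Lemma \ref{lem:meet rays once} together with the uniform convexity of $\CN_{\hyp}(\gamma,r'')$ (guaranteed by $r''\ge 50\epsilon$) and condition (C4): a cone point whose escape ray misses $\eta$ contributes, via (C4), only a $\frac\epsilon2$-sized bump to the convex hull that is itself swallowed by $\CN_{\hyp}(\gamma,r)$. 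Once that local picture is pinned down, the global conclusion is immediate from $\rho$-convexity and the fact that $\eta$'s endpoints lie on $\gamma$.
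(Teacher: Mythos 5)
There is a genuine gap at the step your whole argument rests on: the assertion that $\eta$, ``being a $\rho$-geodesic segment with endpoints on $\gamma$, must remain in any $\rho$-convex set containing its endpoints.'' This is false in $\hat\Sigma$, and its failure is exactly the phenomenon the lemma is designed to control. The surface $\hat\Sigma$ is not simply connected, so between two points on $\gamma$ there are infinitely many $\rho$-geodesic segments, one per homotopy class rel endpoints, and most of them leave any given neighborhood of $\gamma$ (for instance by wrapping around the boundary circle of a far-away deleted ball). Moreover, the notion of convexity in play here (see the definition of convex hull at the start of Section \ref{sec:lemmas}, and the proof of Lemma \ref{lem11}) is homotopical: such a set contains a geodesic with endpoints in it only when that geodesic is homotopic rel endpoints, inside $\hat\Sigma$, to a path lying in the set. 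So the step you must supply, and which your proposal never establishes, is precisely this homotopy statement: $\eta$ is homotopic rel endpoints in $\hat\Sigma$ to a curve contained in $\bigl(\CN_{\hyp}(\gamma,r_0)\cup\bigcup_{p\in\CQ}B_{\hyp}(p,2\delta)\bigr)\cap\hat\Sigma$, where $r_0=\max\{50\epsilon,h_\gamma(\eta)\}$ and $\CQ$ is the set of cone points whose $\gamma$-outgoing ray meets $\eta$ (so $d_{\hyp}(p,\gamma)\le h_\gamma(\eta)$ for all $p\in\CQ$). This is where the height hypothesis actually enters: any cone point in the region bounded by $\gamma$ and $\eta$ has its outgoing ray exiting that region through $\eta$, hence lies within distance $h_\gamma(\eta)$ of $\gamma$, and pushing $\eta$ toward $\gamma$ along the distance gradient only has to dodge deleted balls around such points. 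Once this homotopy is in place, Lemma \ref{lem11} together with uniqueness of the $\rho$-geodesic in a rel-endpoints homotopy class gives $\eta\subset\CU(\gamma,r_0,\frac\epsilon2)\cap\hat\Sigma\subset\CN_{\hyp}(\gamma,r_0+\epsilon)$, which is the paper's argument.

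Your ``heart of the argument'' gestures at the right dichotomy (rays meeting $\eta$ versus rays missing $\eta$), but the claim that points with rays missing $\eta$ are irrelevant---``the deleted ball lies on the far side of $\eta$''---is neither argued nor the right formulation: irrelevance must be a statement about the rel-endpoints homotopy class of $\eta$ in $\hat\Sigma$, not about where a ball sits, and without it your convexity argument never engages $\eta$ at all. Two smaller points: Lemma \ref{lem11} does not say that $\CU(\gamma,r'',\frac\epsilon2)\cap\hat\Sigma$ is $\rho$-convex, only that it contains the $\rho$-convex hull of $\CU(\gamma,r'',2\delta)\cap\hat\Sigma$; and with your choice $r''=r-\frac\epsilon2$ the fattening by $\frac\epsilon2$-balls around all of $\CP(\gamma,r'')$ (points up to distance $r''+2\delta$ from $\gamma$) only lands you in $\CN_{\hyp}(\gamma,r+2\delta)$, so you should take $r''=r-\epsilon$ as the paper does.
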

\begin{proof}
Let $\CQ$ be the set of those singular points $p\in\sing(\Gamma)$ whose $\gamma$-outgoing ray meets $\eta$. Set $r_0=\max\{50\epsilon,h_\gamma(\eta)\}$ and note that $\CQ\subset\CP(\gamma,r_0)$. The geodesic $\eta$ is homotopic in $\hat\Sigma$ and while fixing its endpoints to a curve contained in
$$\CU(\gamma,r_0,2\delta)\cap\hat{\Sigma}=\left(\CN_{\hyp}(\gamma,r_0)\cup\left(\bigcup_{p\in\CQ}B_{\hyp}(p,2\delta)\right)\right)\cap\hat{\Sigma}.$$
The $\rho$-geodesic $\eta$ is then contained in the $\rho$-convex hull of $\CU(\gamma,r_0,2\delta)\cap\hat{\Sigma}$ and hence in $\CU\left(\gamma,r_0,\frac\epsilon 2\right)\cap\hat{\Sigma}\subset \CN_{\hyp}(\gamma,r_0+\epsilon)$ by Lemma \ref{lem11}. We are done. 
\end{proof}

\subsection{The main observation}

Our next goal is to establish the following fact:

\begin{lemma}\label{l:main}
Let $\gamma\subset\tilde{\RO}$ and $\eta\subset\hat{\Sigma}$ be respectively a $\rho_{\hyp}$-geodesic segment and a simple $\rho$-geodesic segment, such that both segments have the same endpoints $\D\gamma=\D\eta$.  Suppose that at least one of the following holds:
\begin{itemize}
\item[(a)] $h_\gamma(\eta)>1$, or  %\frac 12\log(3)
\item[(b)] $\ell_{\hyp}(\gamma)<\epsilon$ and $h_\gamma(\eta)>50\epsilon$. 
\end{itemize}
Then there is $g\in\Gamma\setminus\Id$ such that $\eta$ and $g\eta$ transversely intersect at least twice. 
\end{lemma}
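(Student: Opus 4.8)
The plan is to argue by contradiction, assuming that $\eta$ meets no nontrivial $\Gamma$-translate $g\eta$ transversely in two or more points, and to show that then $h_\gamma(\eta)$ must be small. The starting point is Lemma \ref{l:neighborhood}, which tells us that $\eta\subset\CN_{\hyp}(\gamma,r)$ with $r=\max\{50\epsilon,h_\gamma(\eta)\}+\epsilon$. Let $p\in\sing(\Gamma)$ be a cone point realizing the $\gamma$-height, so $d_{\hyp}(p,\gamma)=h_\gamma(\eta)$ and the $\gamma$-outgoing ray at $p$ hits $\eta$. Let $g\in\Stab_\Gamma(p)\setminus\Id$ be a nontrivial rotation about $p$ (such a $g$ exists since $p$ is singular). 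The key geometric picture is that, near $p$, the geodesic $\eta$ passes through the annulus $B_{\hyp}(p,\cdot)\setminus B_{\hyp}(p,\delta)$, and $g$ rotates $\eta$ about $p$; I want to show these two arcs must cross twice.

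First I would localize the situation: let $r_0 = \max\{50\epsilon, h_\gamma(\eta)\}$. Using Lemma \ref{l:meet rays once} (the radial-foliation monotonicity) the portion of $\eta$ inside a $3\delta$-ball around $p$ behaves like a monotone arc across that ball's radial foliation, so $\eta$ enters and exits $B_{\hyp}(p,3\delta)$ essentially once in a controlled way, and in particular crosses the circle $\D B_{\hyp}(p,3\delta)$ in two points. The crucial claim is that, because $p$ is a point of maximal $\gamma$-height, the arc $\eta$ must ``go around'' the far side of $p$ — i.e. it reaches the point of $B_{\hyp}(p,\cdot)$ that is farthest from $\gamma$ (this is where the $\gamma$-outgoing ray at $p$ meets $\eta$, combined with the fact that nothing in $\eta$ goes further from $\gamma$ than roughly $h_\gamma(\eta)$). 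Rotating $\eta$ by $g$ about $p$ therefore produces an arc $g\eta$ that, near $p$, is a rotated copy wrapping the opposite side; comparing the two arcs' endpoints on $\D B_{\hyp}(p,3\delta)$ and a linking/Jordan-curve argument forces at least two transverse intersection points, provided the rotation angle and the geometry are constrained enough. This is where hypotheses (a) and (b) enter: when $h_\gamma(\eta)>1$ the annulus around $p$ is ``wide'' compared to the $200\epsilon$-scale of conditions (C1)–(C3), so $\gamma$ and $g\gamma$ (hence the neighborhoods $\CN_{\hyp}(\gamma,r_0)$ and $g\CN_{\hyp}(\gamma,r_0)$) are forced into a configuration where $\eta$ and $g\eta$ genuinely cross twice; case (b) handles the degenerate situation where $\gamma$ itself is very short, so that $g\gamma$ is a rotate of a tiny segment and the relevant comparison is governed by $h_\gamma(\eta)>50\epsilon$ alone.

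The cleanest way to make the ``crossing twice'' step precise is probably to use convexity (Lemma \ref{lem11} and property \eqref{star2}) to trap both $\eta$ and $g\eta$ inside $\CU(\gamma,r_0,\tfrac\epsilon2)\cap\hat\Sigma$ and $g(\CU(\gamma,r_0,\tfrac\epsilon2))\cap\hat\Sigma$ respectively, then observe that the two $\rho_{\hyp}$-convex sets $\CN_{\hyp}(\gamma,r_0)$ and $g\CN_{\hyp}(\gamma,r_0)$ intersect in a controlled lens-shaped region, and that the endpoints of $\eta$ (which lie on $\gamma$) and of $g\eta$ (which lie on $g\gamma$) are positioned on opposite sides. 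Since $\eta$ climbs to height $h_\gamma(\eta)$ near $p$ while $g\eta$ climbs to height $h_\gamma(\eta)$ near $g^{-1}p = p$ but on the $g$-rotated side, the two arcs must separate each other's endpoints within a small disc around $p$, and two transverse intersections follow from a parity/Jordan-curve argument (an arc crossing from one side of another arc to the other and back crosses it an even number of times, and here it cannot be zero).

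\textbf{Main obstacle.} I expect the genuinely hard part to be the ``two transverse intersections'' claim itself — converting the intuitive ``$\eta$ wraps around $p$ and its rotate must cross it twice'' into a rigorous statement. The subtleties are: (i) controlling exactly how $\eta$ behaves inside the non-hyperbolic region $B_{\hyp}(p,2\delta)$, where one has only the radial-foliation monotonicity from Lemma \ref{lem:meet rays once} rather than full CAT($-\kappa$) geodesic uniqueness; (ii) ensuring the rotation angle of $g$ (which could be small, e.g. $2\pi/n$ for large $n$) still produces \emph{two} crossings and not zero — this is presumably exactly what the quantitative thresholds $h_\gamma(\eta)>1$ and $\ell_{\hyp}(\gamma)<\epsilon$ are calibrated to guarantee, via comparison with the $200\epsilon$ lower bounds in (C1)–(C3); and (iii) checking transversality (not just topological crossing), which should follow from the ``as simple as possible'' transversality built into the setup together with the monotonicity in Lemma \ref{lem:meet rays once}. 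The convexity lemmas do most of the heavy lifting for locating the arcs, so the residual difficulty is really the planar topology of two arcs in a disc with prescribed boundary behavior.
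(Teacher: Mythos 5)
Your outline has the right raw ingredients (the height-realizing cone point $p$, a rotation in $\Stab_\Gamma(p)$, Lemma \ref{l:neighborhood}, a Jordan-curve argument), but there are two genuine gaps, and they sit exactly where the paper does its work. First, your local picture near $p$ is not forced by the hypotheses: all you know is that the $\gamma$-outgoing ray at $p$ meets $\eta$ at some point within roughly $\epsilon$ of $p$. The cone point $p$ need not lie inside the disk bounded by $\gamma\cup\eta$, and $\eta$ need not ``wrap around'' $p$ in any separating sense -- it may cross the ray beyond $p$ and retreat on the same side (for instance crossing the ray twice), in which case a rotated copy $g\eta$ need not meet $\eta$ near $p$ at all and your parity argument has no separating data to run on. The paper spends a whole step (Claim 1) repairing precisely this: after reducing to the case where $\gamma\cup\eta$ bounds a disk, it iteratively replaces the pair by $(\gamma',\eta')$ with $\gamma'$ a subarc of the outgoing ray and $\eta'\subset\eta$, checks via Lemma \ref{lem:meet rays once} that the new pair still satisfies hypothesis (b), and terminates because $\ell_\rho(\eta')\le\ell_\rho(\eta)-98\epsilon$ at each step; the output is a pair $(\bar\gamma,\bar\eta)$ whose disk $\bar\Delta$ genuinely contains a height-realizing singular point. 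Only then does the topology work, and it is global rather than local: $p\in g\bar\Delta\cap\bar\Delta$, neither disk can contain the other, so the boundaries must cross at least twice.

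Second, you take an arbitrary $g\in\Stab_\Gamma(p)\setminus\Id$ and (rightly) flag in your obstacles paragraph that a small rotation angle might kill the argument, hoping the thresholds in (a)/(b) are calibrated against the $200\epsilon$ constants of (C1)--(C3) to rescue it. That is not how the proof goes: one chooses $g\in\Stab_\Gamma(p)$ with rotation angle in $[\frac{2\pi}{3},\frac{4\pi}{3}]$ (such a power always exists in the finite cyclic stabilizer), and hypotheses (a) and (b) enter through explicit hyperbolic trigonometry (Claim 2 in the paper) giving $d_{\hyp}(\bar\gamma,g^{\pm 1}\bar\gamma)>h_{\bar\gamma}(\bar\eta)+\epsilon$ in case (a), and the analogous estimate through the projection of $p$ to $\bar\gamma$ in case (b); combined with $\bar\Delta\subset\CN_{\hyp}(\bar\gamma,h_{\bar\gamma}(\bar\eta)+\epsilon)$ from Lemma \ref{l:neighborhood}, this shows $g^{\pm1}\bar\gamma\cap\bar\Delta=\emptyset$. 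That containment is what pins the two boundary crossings onto $\bar\eta\cap g\bar\eta$ rather than onto $\bar\gamma$ or $g\bar\gamma$, and nothing in your sketch plays this role. (Transversality, by contrast, is not a real issue: two distinct $\rho$-geodesics can only meet transversally, by uniqueness of geodesics with given initial conditions.)
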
 

\begin{bem}
It follows by a limiting argument and Lemma \ref{l:main} that, if we replace the ``$\max$" in the definition of $h_{\gamma}(\eta)$ by a ``$\sup$", then the lemma also holds when $\gamma$ and $\eta$ are a complete $\rho_{\hyp}$-geodesic and a complete simple $\rho$-geodesic which have the same endpoints in $\partial_{\infty}{\tilde{\RO}}$, the boundary at infinity of $\tilde{\RO}$. To see that this is the case parametrize $\eta:\BR\to\hat\Sigma$, let $\gamma_n$ be the $\rho_{\hyp}$-geodesic segment with endpoints $\eta(-n)$ and $\eta(n)$, set $\eta_n=\eta([-n,n])$, and note that
$$h_\gamma(\eta)\le\liminf_{n\to\infty} h_{\gamma_n}(\eta_n).$$
It thus follows from the lemma that if $h_\gamma(\eta)>1$ then there are $n>0$ and $g\in\Gamma\neq\Id$ such that $\eta_n$ and $g\eta_n$ transversely intersect at least twice. This means a fortiori that $\eta$ and $g\eta$ also meet transversely at least twice.
\end{bem}

\begin{proof}
Starting with the proof of Lemma \ref{l:main}, suppose that $\gamma$ and $\eta$ satisfy one of the two possible conditions in the statement. As a first observation note that if $\gamma$ and $\eta$ satisfy (a) (resp. (b)) and meet in a point other than in their end points, then there are subsegments $\gamma'\subset\gamma$ and $\eta'\subset\eta$ with $\D\gamma'=\D\eta'$, which still satisfy (a) (resp. (b)) and such that $\gamma'$ and $\eta'$ meet only at their endpoints. This means that we can assume without loss of generality that the loop obtained by concatenating $\gamma$ and $\eta$ is simple. Or said differently that $\gamma$ and $\eta$ bound a disk $\Delta$ in $\BH^2$. 

\begin{claim}\label{claim1}
There are a $\rho_{\hyp}$-geodesic segment $\bar\gamma$ and a subsegment $\bar\eta\subset\eta$ satisfying the following properties:
\begin{enumerate}
\item The two segments $\bar\gamma$ and $\bar\eta$ have the same endpoints and disjoint interiors. 
\item The pair $(\bar\gamma,\bar\eta)$ satisfies one of the two conditions (a) and (b) in the statement of the lemma.
\item The disk $\bar\Delta$ with boundary $\bar\gamma\cup\bar\eta$ contains a point $p\in\sing(\Gamma)\cap\bar\Delta$ with $d_{\hyp}(p,\bar\gamma)=h_{\bar\gamma}(\bar\eta)$.
\end{enumerate}
\end{claim}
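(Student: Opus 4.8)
The plan is to extract $\bar\gamma$ and $\bar\eta$ by a minimality argument applied to the collection of all disks $\Delta$ cut out by a $\rho_{\hyp}$-geodesic segment and a subsegment of $\eta$ sharing endpoints and still satisfying one of (a), (b). By the reduction already performed in the proof, we may start from a pair $(\gamma,\eta)$ bounding an embedded disk $\Delta\subset\BH^2$. Among all $\rho_{\hyp}$-geodesic segments $\gamma''$ with $\D\gamma''\subset\eta$, $\gamma''$ and the corresponding subarc $\eta''\subset\eta$ bounding a subdisk of $\Delta$, and $(\gamma'',\eta'')$ satisfying (a) or (b), choose one, call it $(\bar\gamma,\bar\eta)$, whose associated disk $\bar\Delta$ has the smallest possible $\rho_{\hyp}$-area (or is minimal with respect to inclusion among such disks; either works since the disks involved are nested along $\eta$). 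Property (1) is immediate since we arranged the interiors to be disjoint, and (2) holds by the defining property of the collection.

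The heart is property (3): that the singular point realizing $h_{\bar\gamma}(\bar\eta)$ actually lies inside $\bar\Delta$. Suppose not. Let $p\in\sing(\Gamma)$ realize $h_{\bar\gamma}(\bar\eta)$, so the $\bar\gamma$-outgoing ray at $p$ meets $\bar\eta$; write $q$ for a point of intersection. If $p\notin\bar\Delta$, then the segment of the outgoing ray from $p$ to $q$ must cross $\bar\gamma$ or the rest of $\D\bar\Delta$; since the outgoing ray is a $\rho_{\hyp}$-geodesic moving monotonically away from $\bar\gamma$, it cannot re-cross $\bar\gamma$, so the picture is constrained. The idea is that $q$ divides $\bar\eta$ into two subarcs, and the outgoing-ray segment together with one of these subarcs and a piece of $\bar\gamma$ bounds a strictly smaller disk; one then replaces $\bar\gamma$ by the $\rho_{\hyp}$-geodesic segment connecting the relevant endpoints. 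The claim is that this smaller configuration still satisfies (a) or (b) — because the height is controlled: the new $\bar\gamma'$ is still close to the old one (both lie in a thin neighborhood by Lemma \ref{l:neighborhood}), and $p$, or at worst another singular point at comparable distance, still has its outgoing ray meeting the shortened $\bar\eta'$. This contradicts minimality. I expect this case analysis to be the main obstacle: one has to rule out, or absorb into the argument, the possibility that cutting off a subdisk destroys the height condition, and one has to be careful that the newly chosen geodesic segment $\bar\gamma'$ does not itself reintroduce intersections with $\bar\eta'$ in its interior — which is why the monotonicity of outgoing rays and the thinness estimate of Lemma \ref{l:neighborhood} are the right tools.

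A cleaner route, which I would actually try first, is to argue directly without replacement: take $p\in\sing(\Gamma)$ realizing $h_{\bar\gamma}(\bar\eta)$ with intersection point $q\in\bar\eta$ on the outgoing ray. If $p\in\bar\Delta$ we are done with the original pair, taking $(\bar\gamma,\bar\eta)=(\gamma,\eta)$. If $p\notin\bar\Delta$, then since $q\in\bar\Delta$ and the outgoing ray from $p$ hits $q$, the subray from $p$ to $q$ enters $\bar\Delta$ through its boundary. It cannot enter through $\bar\gamma$ (a geodesic ray leaving $\bar\gamma$ monotonically meets $\bar\gamma$ at most once, at its start, and here it starts at $p\notin\bar\gamma$, so if it meets $\bar\gamma$ it does so once; follow that through). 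So, after possibly passing through $\bar\gamma$ once, it enters through $\bar\eta$, say at $q'$, before reaching $q$. Now $q'$ and $q$ are both on $\bar\eta$, and the arc of the outgoing ray between them together with the subarc of $\bar\eta$ between $q'$ and $q$ bounds a disk $\bar\Delta'\subsetneq\bar\Delta$; replacing $\bar\gamma$ by the $\rho_{\hyp}$-geodesic with the same endpoints as this new $\bar\eta$-subarc gives a strictly smaller admissible pair unless $p\in\bar\Delta'$ — and one checks $p$ does lie in $\bar\Delta'$ because it is an endpoint of the defining ray-arc (or sits immediately outside a singular ball, handled by the $3\delta$ versus $\epsilon$ separation from (C2)–(C4)). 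Iterating, or invoking minimality, forces $p\in\bar\Delta$. I would then verify property (2) survives the replacement: the key point is that $d_{\hyp}(p,\bar\gamma')\ge d_{\hyp}(p,\bar\gamma)-O(\epsilon)$ by Lemma \ref{l:neighborhood}, so if (a) held with slack $h_{\bar\gamma}(\bar\eta)>1$ we keep $h_{\bar\gamma'}(\bar\eta')>50\epsilon$ and, combined with $\ell_{\hyp}(\bar\gamma')<\ell_{\hyp}(\bar\gamma)$, land in (b) if not in (a); the slacks in (a) and (b) are chosen precisely so this bookkeeping closes up. The main obstacle, as above, is making the "survives the replacement" step airtight rather than hand-wavy — that is where the constants $1$, $50\epsilon$, $\epsilon$ and condition (C4) do the real work.
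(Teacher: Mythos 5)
Your overall strategy is the same as the paper's (use the outgoing ray at the height-realizing singular point to cut off a smaller disk, and repeat), but as written the argument has two genuine gaps. First, the minimality set-up is not justified: conditions (a) and (b) are strict inequalities, hence not closed under limits, so a minimal-area admissible pair need not exist, and distinct admissible subdisks need not be nested, so your claim that ``either works since the disks involved are nested along $\eta$'' is unsupported; the iteration variant fares no better because you give no termination argument. The paper closes exactly this hole quantitatively: each replacement shortens the $\rho$-length of the $\eta$-arc by a definite amount (at least $98\epsilon$, since by Lemma \ref{l:neighborhood} all intersections of the outgoing ray with $\eta$ occur in $B_{\hyp}(p,\epsilon)$, and $\eta$ must travel from $\gamma$ out to that ball and back through a region where $\rho=\rho_{\hyp}$), so the process stops after finitely many steps.

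The more serious gap is the verification that the new, smaller pair still satisfies (a) or (b). Your key estimate $d_{\hyp}(p,\bar\gamma')\ge d_{\hyp}(p,\bar\gamma)-O(\epsilon)$ is false in the relevant configuration: since the intersections of the outgoing ray with $\eta$ all lie in $B_{\hyp}(p,\epsilon)$, the new segment (whether your chord from $q'$ to $q$ or the sub-arc of the ray) lies within roughly $\epsilon$ of $p$, so $d_{\hyp}(p,\bar\gamma')$ is tiny and the old height gives no information about $h_{\bar\gamma'}(\bar\eta')$; that height must be re-derived from scratch. Moreover, (b) requires $\ell_{\hyp}(\bar\gamma')<\epsilon$, not merely $\ell_{\hyp}(\bar\gamma')<\ell_{\hyp}(\bar\gamma)$, and your parenthetical that ``$p$ does lie in $\bar\Delta'$'' cannot be right, since $\bar\Delta'\subset\bar\Delta$ while you assumed $p\notin\bar\Delta$. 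The paper's mechanism here is different and essential: it takes $\bar\gamma'$ to be a sub-segment of the outgoing ray $\sigma$ itself with $\bar\gamma'\cap\eta=\partial\bar\gamma'$ (which also removes your worry about the replacement chord re-crossing $\eta$, since the ray is already a $\rho_{\hyp}$-geodesic); then, because the new $\eta$-arc has both endpoints on the radial ray $\sigma$, is contained in the simple geodesic $\eta$ which exits $B_{\hyp}(p,52\epsilon)$ in both directions, and $\rho=\rho_{\hyp}$ on the relevant annulus, Lemma \ref{lem:meet rays once} forbids it from staying inside $B_{\hyp}(p,52\epsilon)$; hence it leaves $\CN_{\hyp}(\bar\gamma',51\epsilon)$ and the contrapositive of Lemma \ref{l:neighborhood} gives $h_{\bar\gamma'}(\bar\eta')>50\epsilon$, which together with $\ell_{\hyp}(\bar\gamma')<\epsilon$ is exactly condition (b). This step, where the simplicity of $\eta$ actually enters, is missing from your proposal, so the induction does not close up.
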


We suggest that at first, instead of studying the proof of the claim, the reader spends some time looking at Figure \ref{fig producing disk}. 

\begin{proof}[Proof of Claim \ref{claim1}]
If the disk $\Delta$ bounded by the concatenation of $\gamma$ and $\eta$ satisfies (3) then we have nothing to prove. If this is not the case then we will find a hyperbolic geodesic segment $\gamma'$ and a subsegment $\eta'\subset\eta$ satisfying (1) and (2), and such that $\eta'$ is at least $\epsilon$-shorter than $\eta$, that is $\ell_\rho(\eta')\le\ell_\rho(\eta)-\epsilon$. Now, if the disk $\Delta'$ associated to $\gamma'$ and $\eta'$ satisfies (3) then we are done. Otherwise we iterate our procedure... But this process can only be repeated finitely many times because at each step we lose a definite amount of length, and the length of the original segment $\eta$ is finite.

Let us see how we find $\gamma'$ and $\eta'$. We start by taking a point $p\in\sing(\Gamma)$ such that the $\gamma$-outgoing ray $\sigma$ at $p$ meets $\eta$ and with $d_{\hyp}(p,\gamma)=h_\gamma(\eta)$. Note that Lemma \ref{l:neighborhood} implies that all intersections of $\sigma$ and $\eta$ happen in the annulus $B_{\hyp}(p,\epsilon)\cap \hat\Sigma=B_{\hyp}(p,\epsilon)\setminus B_{\hyp}(p,\delta)$. Since the outgoing ray $\sigma$ intersects $\eta$ and we are assuming that $p\notin\Delta$, $\sigma$ must intersect $\eta$ at least twice. We deduce that there is a closed subsegment 
$$\gamma'\subset\sigma\cap(B_{\hyp}(p,\epsilon)\cap \hat\Sigma)$$
with $\gamma'\cap\eta=\D\gamma'$. Let $\eta'\subset\eta$ be the subsegment of $\eta$ bounded by $\D\gamma'\subset\eta$. 

\begin{figure}[h]
\includegraphics[width=1\textwidth]{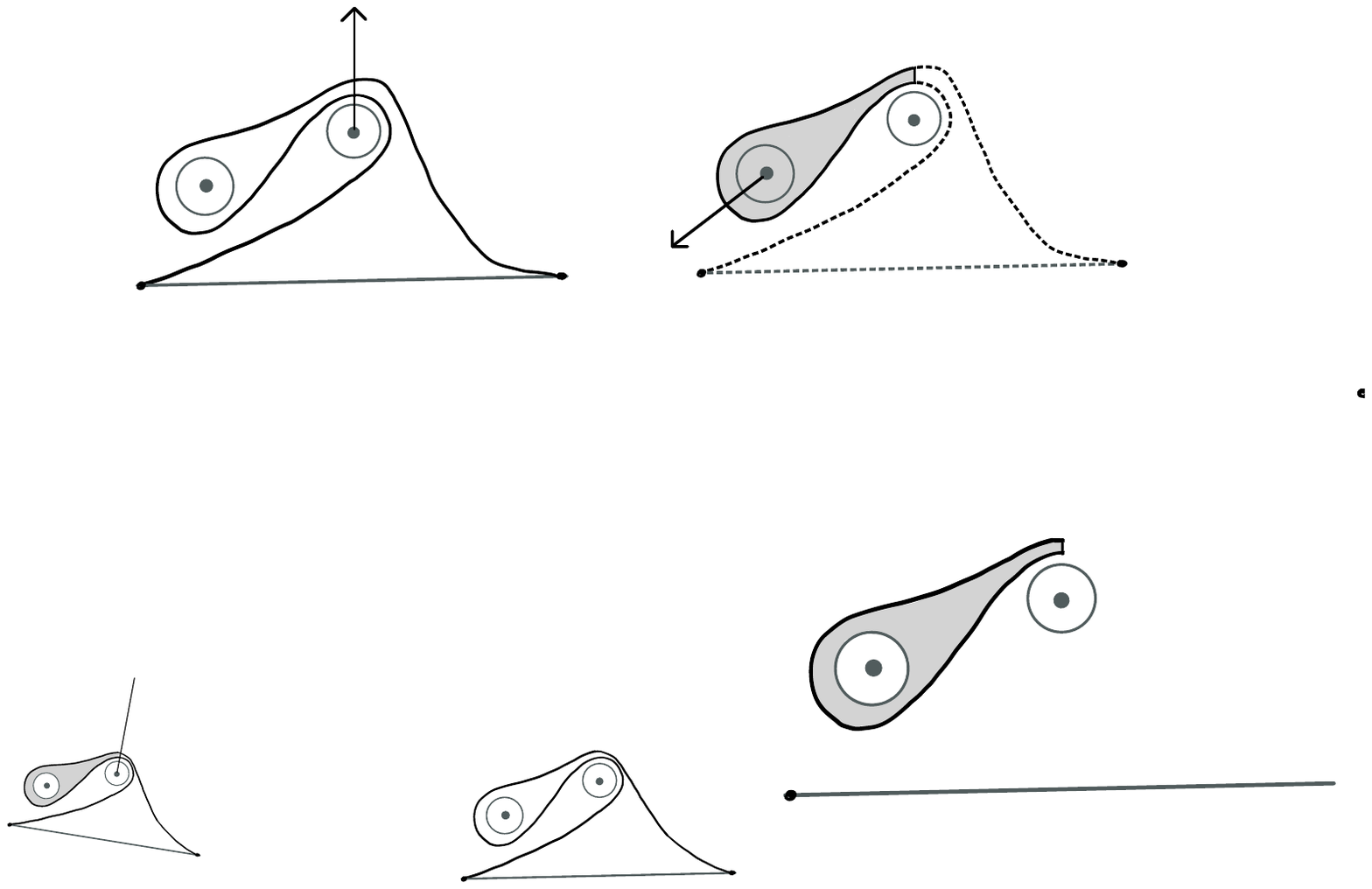}
\caption{Proof of Claim \ref{claim1}.}
\label{fig producing disk}
\end{figure}

By construction the pair $\gamma',\eta'$ satisfies (1). Moreover, since $\eta$ has to travel at least distance $49\epsilon$ to go from $\gamma$ to $B_{\hyp}(p,\epsilon)$ and since the metrics $\rho$ and $\rho_{\hyp}$ agree on $B_{\hyp}(p,50\epsilon)\setminus B_{\hyp}(p,\delta)$ we get that
$$\ell_{\rho}(\eta')\le\ell_{\rho}(\eta)-98\epsilon,$$
which beats our established goal of reducing the length by $\epsilon$ by a proud $97\epsilon$. It just suffices to prove that the pair $(\gamma',\eta')$ satisfies (2), meaning that one of the conditions (a) or (b) holds. Actually, we are going to argue that they satisfy (b). First, $\gamma'$ is shorter than $\epsilon$ by construction. It thus suffices to check that $h_{\gamma'}(\eta')>50\epsilon$. By Lemma \ref{l:neighborhood}  it suffices to prove that $\eta'$ exits $\CN_{\hyp}(\gamma',51\epsilon)$, or even better, that it exits the ball $B_{\hyp}(p,52\epsilon)$. Indeed, since $\eta'$ has both endpoints in $\sigma$, a hyperbolic ray emanating from $p$, since $\eta'$ is contained in $\eta$, a simple geodesic which exists $B_{\hyp}(p,52\epsilon)$ in both directions, and since the metrics $\rho$ and $\rho_{\hyp}$ agree by construction on $B_{\hyp}(p,52\epsilon)\setminus B_p(2\delta,d_{\hyp})$ we get from Lemma \ref{lem:meet rays once} that $\eta'$ cannot be contained in $B_{\hyp}(p,52\epsilon)$. We have proved the claim.
\end{proof}

Continuing with the proof of Lemma \ref{l:main} and with notation as in Claim \ref{claim1} choose $g\in\Stab_{\Gamma}(p)$ with rotation angle $\theta\in[\frac{2\pi}{3}, \frac{4\pi}{3}]$. Note that such $g$ always exists.

\begin{claim}\label{claim2}
We have $g^{\pm 1}(\bar\gamma)\cap\bar\Delta=\emptyset.$
\end{claim}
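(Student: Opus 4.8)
\textbf{Proof proposal for Claim \ref{claim2}.}

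The plan is to show that the geodesic segment $\bar\gamma$, being very short and passing close to $p$ but not through $p$, is rotated by $g^{\pm1}$ to a segment that lies entirely outside the disk $\bar\Delta$. The key quantitative inputs are: $\ell_{\hyp}(\bar\gamma)<\epsilon$ (we are in case (b), by property (2) of Claim \ref{claim1}); the rotation $g$ fixes $p$ with angle $\theta\in[\tfrac{2\pi}{3},\tfrac{4\pi}{3}]$; and, crucially, $d_{\hyp}(p,\bar\gamma)=h_{\bar\gamma}(\bar\eta)$, which by property (2) is either $>1$ (case (a), which we have excluded here) or $>50\epsilon$ (case (b)). So $\bar\gamma$ sits at distance $>50\epsilon$ from $p$ while having length $<\epsilon$; hence $\bar\gamma$ is contained in a thin annular region $\CN_{\hyp}(p, d_{\hyp}(p,\bar\gamma)+\epsilon)\setminus B_{\hyp}(p, d_{\hyp}(p,\bar\gamma))$, i.e. $\bar\gamma$ is confined to an angular sector at $p$ of very small angular width.

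First I would estimate the angular width of $\bar\gamma$ as seen from $p$. Since $\bar\gamma$ has hyperbolic length $<\epsilon$ and lies at distance $R:=d_{\hyp}(p,\bar\gamma)>50\epsilon$ from $p$, an elementary hyperbolic trigonometry estimate (comparing with a circular arc at radius $R$, whose length is $\sinh(R)$ times the angle) shows that $\bar\gamma$ subtends an angle at $p$ of size at most $\epsilon/\sinh(R)$, which is far smaller than $\tfrac{2\pi}{3}$ — in fact smaller than any fixed positive constant once $\epsilon<10^{-10}$. Next I would observe that the disk $\bar\Delta$ is bounded by $\bar\gamma\cup\bar\eta$, and by Lemma \ref{l:neighborhood} applied to the pair $(\bar\gamma,\bar\eta)$ we have $\bar\eta\subset\CN_{\hyp}(\bar\gamma,r)$ with $r=\max\{50\epsilon,h_{\bar\gamma}(\bar\eta)\}+\epsilon=h_{\bar\gamma}(\bar\eta)+\epsilon=R+\epsilon$ (using $h_{\bar\gamma}(\bar\eta)>50\epsilon$ and $d_{\hyp}(p,\bar\gamma)=h_{\bar\gamma}(\bar\eta)$). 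Hence the whole disk $\bar\Delta$ is contained in $\CN_{\hyp}(\bar\gamma, R+\epsilon)$, and in particular every point of $\bar\Delta$ lies within hyperbolic distance $R+\epsilon$ of $\bar\gamma$, so within the angular sector at $p$ of width $\le \epsilon/\sinh(R) + (\text{a bit more})$ and radial extent $[R-\epsilon, R+2\epsilon]$ or so — in any case, a region subtending an angle $\ll \tfrac{2\pi}{3}$ at $p$.

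Then I would conclude: applying the rotation $g^{\pm1}$ about $p$ through angle $\pm\theta$ with $\theta\in[\tfrac{2\pi}{3},\tfrac{4\pi}{3}]$ moves this thin sector to a sector making angle at least $\tfrac{2\pi}{3}-o(1)>0$ with itself (and the rotation is an isometry fixing $p$, so it preserves distance to $p$). Therefore $g^{\pm1}(\bar\gamma)$, which lies at distance exactly $R$ from $p$ in a sector disjoint from the one containing $\bar\Delta$, cannot meet $\bar\Delta$. This gives $g^{\pm1}(\bar\gamma)\cap\bar\Delta=\emptyset$.

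The main obstacle I anticipate is making the angular-width bookkeeping clean and rigorous: one must be careful that $\bar\Delta$ really does fit inside a narrow sector at $p$ (this uses both the short length of $\bar\gamma$ \emph{and} the containment of $\bar\eta$ in a controlled neighborhood of $\bar\gamma$ from Lemma \ref{l:neighborhood}), and one must check that the radial overlap of $g^{\pm1}(\bar\gamma)$ with $\bar\Delta$ cannot cause an intersection once the sectors are angularly separated by a definite amount. The rotation angle being bounded \emph{away from $0$ and from $2\pi$} (i.e. in $[\tfrac{2\pi}{3},\tfrac{4\pi}{3}]$) is exactly what rules out the degenerate possibility that the rotated sector wraps back onto the original one; this is why that specific range was chosen for $g$.
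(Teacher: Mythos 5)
Your proposal has two genuine gaps. First, you restrict to case (b) and declare case (a) ``excluded'', but property (2) of Claim \ref{claim1} only guarantees that the pair $(\bar\gamma,\bar\eta)$ satisfies (a) \emph{or} (b): when the original disk already satisfies (3), the pair is the original $(\gamma,\eta)$, which may satisfy only (a), and then $\bar\gamma$ need not be short at all (this situation really occurs: in the proof of Proposition \ref{main proposition} Lemma \ref{l:main} is applied with hypothesis (a) to geodesic segments $\gamma$ of length up to $4$). So the angular-width estimate, which uses $\ell_{\hyp}(\bar\gamma)<\epsilon$ in an essential way, cannot prove the claim as stated; case (a) has to be treated separately, e.g.\ via the exact formula $d_{\hyp}(\bar\gamma,g^{\pm1}\bar\gamma)=2\cosh^{-1}\bigl(\tfrac{\sqrt 3}{2}\cosh d_{\hyp}(p,\bar\gamma)\bigr)\ge\tfrac32 d_{\hyp}(p,\bar\gamma)$.

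Second, and more seriously, the central geometric containment you use in case (b) is false. By Claim \ref{claim1}(3) the point $p$ lies in $\bar\Delta$; in fact $d_{\hyp}(p,\partial\bar\Delta)\ge\delta$ (since $\bar\eta\subset\hat\Sigma$ avoids $\CN_{\hyp}(\sing(\Gamma),\delta)$ and $d_{\hyp}(p,\bar\gamma)>50\epsilon$), so a whole ball around $p$ is contained in $\bar\Delta$, and $\bar\Delta$ meets every angular sector at $p$ and contains points at every radius from $0$ up to roughly $R=d_{\hyp}(p,\bar\gamma)$. Lemma \ref{l:neighborhood} gives $\bar\Delta\subset\CN_{\hyp}\bigl(\bar\gamma,h_{\bar\gamma}(\bar\eta)+\epsilon\bigr)$, which is essentially a ball of radius about $R+2\epsilon$ centered on the short segment $\bar\gamma$ --- not an annular sector of radial extent $[R-\epsilon,R+2\epsilon]$ about $p$ --- so your step ``within distance $R+\epsilon$ of $\bar\gamma$, hence in a thin sector at $p$'' is where the argument breaks, and with it the conclusion that the rotated sector misses $\bar\Delta$. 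What actually finishes the proof is a distance bound: one must show $d_{\hyp}(\bar\gamma,g^{\pm1}\bar\gamma)>h_{\bar\gamma}(\bar\eta)+\epsilon$ and then invoke Lemma \ref{l:neighborhood}. In case (b) this is done by projecting $p$ to $p^*\in\bar\gamma$ and using the comparison estimate $d_{\hyp}(p^*,g^{\pm1}p^*)\ge\sqrt3\,d_{\hyp}(p,p^*)>h_{\bar\gamma}(\bar\eta)+3\epsilon$ together with $\ell_{\hyp}(\bar\gamma)<\epsilon$. Your observation that the rotation angle is bounded away from $0$ and $2\pi$ is indeed the right ingredient, but you never convert it into such a quantitative lower bound on how far $g^{\pm1}$ moves $\bar\gamma$, and the sector bookkeeping you substitute for it is incompatible with $p\in\bar\Delta$.
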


%\begin{figure}[h]
%\includegraphics[width=0.8\textwidth]{Pics/whatisit.pdf}
%\caption{blah}
%\label{ugly}
%\end{figure}

\begin{proof}[Proof of Claim \ref{claim2}]
Suppose first that the pair $(\bar\gamma,\bar\eta)$ satisfies (a), meaning that 
$$d_{\hyp}(p,\bar\gamma)=h_{\bar\gamma}(\bar\eta)>1.$$
A computation using standard hyperbolic trigonometry implies that 
$$d_{\hyp}(\bar\gamma,g^{\pm 1}\bar\gamma)=2\cosh^{-1}\left(\frac{\sqrt 3}2\cosh(d_{\hyp}(p,\bar\gamma))\right)\ge \frac 32d_{\hyp}(p,\bar\gamma)>h_{\bar\gamma}(\bar\eta)+\epsilon.$$
The claim follows thus because $\bar\Delta\subset\CN_{\hyp}(\bar\gamma,h_{\bar\gamma}(\bar\eta)+\epsilon)$ by Lemma \ref{l:neighborhood}. We are done if the pair $(\bar\gamma,\bar\eta)$ satisfies (a). Suppose now that it satisfies (b) and let $p^*$ be projection of $p$ to $\bar\gamma$. Now, either again a hyperbolic geometry computation, or just plainly comparing with the comparison euclidean triangle one gets that
$$d_{\hyp}(p^*,g^{\pm 1}p^*)\ge\sqrt{ 3}\cdot d_{\hyp}(p,p^*)\ge \sqrt 3\cdot h_{\bar\gamma}(\bar\eta)>h_{\bar\gamma}(\bar\eta)+3\epsilon.$$
Again the claim follows from Lemma \ref{l:neighborhood}.
\end{proof}

We are now ready to conclude the proof of the lemma. First note that 
$$g\bar\Delta\cap\bar\Delta\neq\emptyset$$ 
because $gp=p$. Since $g\bar\Delta$ can neither be contained in $\bar\Delta$ nor contain $\bar\Delta$ we deduce that $\vert g(\D\bar\Delta)\cap\D\bar\Delta\vert\ge 2$. Since $\D\bar\Delta=\bar\gamma\cup\bar\eta$ we get from Claim \ref{claim2} that
$$g\bar{\gamma}\cap(\partial \bar\Delta) =\emptyset \text{ and } \bar{\gamma}\cap g(\partial \bar\Delta)=\emptyset.$$
It follows that $g(\partial \bar\Delta)\cap(\partial \bar\Delta)=g\bar\eta\cap\bar\eta$ and hence that $\vert g\bar\eta\cap\bar\eta\vert\ge 2$. The lemma then follows because $\bar\eta\subset\eta$. 
\end{proof}

\section{Proof of Proposition \ref{main proposition}}\label{sec:proof of proposition}
We are now finally ready to prove the remaining proposition:

\begin{named}{Proposition \ref{main proposition}}
Let $\RO$ be as in the statement of Theorem \ref{main}, $\hat{\Sigma}$ as in \eqref{eq associated surface}, and $\rho$ the metric on $\hat{\Sigma}$ constructed in Section \ref{sec:metric}.  There exists $A\ge 1$ such that any unit speed $\rho$-geodesic $\alpha: \BR\to\hat\Sigma$ which is (1) a quasigeodesic in $(\tilde{\RO},\rho_{\hyp})$ and (2) as simple as possible, is actually $A$-quasigeodesic in $(\tilde{\RO},\rho_{\hyp})$. 
\end{named}

\begin{proof}
Note first that compactness of $\hat\Sigma/\Gamma$, together with $\Gamma$-invariance of the metric $\rho$, implies that the inclusion $(\hat\Sigma,\rho)\hookrightarrow(\tilde\RO,\rho_{\hyp})$ is locally $A_0$-bi-Lipschitz for some $A_0$. It follows in particular that for all $s,t\in\BR$ we have
\begin{equation}\label{eq-tired of this1}
A_0\cdot\vert s-t\vert\ge d_{\hyp}(\alpha(s),\alpha(t)).
\end{equation}
The remaining of the proof is devoted to showing that the other inequality in the definition of quasigeodesic holds for some constant independent of the concrete $\alpha$. 

As a first step we choose a $\Gamma$-invariant triangulation $\CT$ of $\tilde\RO$ whose edges $\CT$ are $\rho_{\hyp}$-geodesic segments of length at most $\epsilon$. Consider the set
$$\CP=\{\text{simplices of }\CT\text{ contained in }\CN_{\hyp}(\sing(\Gamma),60\epsilon)\}$$ 
and let $\vert\CP\vert=\cup_{\sigma\in\CP}\sigma\subset\tilde\RO$ be the union of all those simplexes. Let also 
$$\CE=\{\text{edges of }\CT\text{ contained in the closure of }\tilde\RO\setminus\vert\CP\vert\}$$
and let $\vert\CE\vert=\cup_{e\in\CE}e\subset\tilde\RO$ be the graph obtained by taking the union of all the edges of $\CT$ which are disjoint of the interior of $\vert\CP\vert$. Note that the elements in $\CE$ are not only $\rho_{\hyp}$-geodesic but also $\rho$-geodesic because the metrics $\rho$ and $\rho_{\hyp}$ agree away from a very small neighborhood of $\sing(\Gamma)$. Note also that compactness of $\RO=\tilde\RO/\Gamma$ and $\Gamma$-invariance of $\CT$ imply that there is $C$ such that the following holds for all $p\in\tilde\RO$:
\begin{itemize}
\item[(a)] Less than $C$ elements of $\CE$ intersect the ball $B_{\hyp}(p,4)$.
\end{itemize}
We care about all of this because we will estimate the $\rho$-length of subsegments of $\alpha$ as in the statement of Proposition \ref{main proposition} in terms of the number of edges in $\CE$ that they meet. The key observation is that, since $\alpha$ as in the statement is as simple as possible and hence also simple, and since it cannot spend infinite time in any single ball since it is quasigeodesic, we get from Lemma \ref{lem:meet rays once} that there is some $D>\epsilon$ such that:
\begin{itemize}
\item[(b)] If $\alpha:\BR\to\tilde\RO$ is as in the statement of the proposition and if $[s,t]\subset\BR$ is such that $\alpha[s,t]\cap\vert\CE\vert=\emptyset$ then $\vert s-t\vert<D$.
\end{itemize}

We get from (b) that a geodesic $\alpha$ as in the statement never spends much time without meeting one of the edges in $\CE$. We prove next that once such an $\alpha: \mathbb{R}\to\hat{\Sigma}$ leaves $e\in \CE$ it never comes back to $e$. Indeed, suppose that $s<t$ are such that $\alpha(s),\alpha(t)\in e$ for some $e\in\CE$. Denote by $\gamma$ the subsegment of $e$ between $\alpha(s)$ and $\alpha(t)$ and let $\eta=\alpha[s,t]$. We claim first that $\eta\subset\CN_{\hyp}(\gamma,55\epsilon)$. Otherwise we get from Lemma \ref{l:neighborhood} that $h_\gamma(\eta)>50\epsilon$ and then from Lemma \ref{l:main} that there is $g\in\Gamma$ such that $\vert\eta\cap g\eta\vert\ge 2$, contradicting the assumption that $\alpha$ is as simple as possible. We have thus proved that $\eta\subset\CN_{\hyp}(\gamma,55\epsilon)$. But noting that $\CN_{\hyp}(\gamma,55\epsilon)\subset\CN_{\hyp}(e,55\epsilon)\subset\hat{\Sigma}$ is contractible and that both metrics $\rho$ and $\rho_{\hyp}$ agree thereon we deduce that $\gamma=\eta$ because both are geodesic segments with the same endpoints. We have established the following key fact:
\begin{itemize}
\item[(c)] If $\alpha:\BR\to\tilde\RO$ is as in the statement of the proposition and if $s<t\in\BR$ are such that $\alpha(s),\alpha(t)\in e$ for some $e\in\CE$ then $\alpha[s,t]\subset e$.
\end{itemize}

The reader surely can at this point imagine how (b) and (c) interplay, but might still be wondering why we bothered to state (a) at all. Well, the reason is coming. Still assuming that $\alpha:\BR\to\hat\Sigma$ is a $\rho$-geodesic as in the statement, suppose that $s,t\in\BR$ are such that $d_{\hyp}(\alpha(s),\alpha(t))\le 4$, let $\gamma\subset\tilde\RO$ be the $\rho_{\hyp}$-geodesic segment joining $\alpha(s)$ and $\alpha(t)$, and finally let $p$ be the midpoint of $\gamma$. Since $\alpha$ is as simple as possible we get from Lemma \ref{l:main} that $h_\gamma(\alpha[s,t])\leq1$. Lemma \ref{l:neighborhood} yields then that
$$\alpha([s,t])\subset\CN_{\hyp}(\gamma,1+\epsilon)\subset B_{\hyp}(p,4).$$
We thus get from (a) that $\alpha[s,t]$ meets at most $C$ elements of $\CE$, and from (c) that, if we cut $\alpha[s,t]$ at all the points where we enter an element of $\CE$, then we produce at most $C+1$ segments. Since all of them have at most length $D$ by (b) we obtain:
\begin{itemize}
\item[(d)] If $\alpha:\BR\to\tilde\RO$ is as in the statement of the proposition and if $s,t\in\BR$ are such that $d_{\hyp}(\alpha(s),\alpha(t))\le 4$ then $\vert s-t\vert\le (C+1)\cdot D$.
\end{itemize}

We are almost at the end of the proof of the proposition. Recalling that $\alpha$ as in the statement of the proposition is a quasigeodesic, let $\gamma\subset\tilde\RO$ be the hyperbolic geodesic with the same endpoints as $\alpha$, and let
$$\pi:\BH^2\to\gamma$$
be the nearest point projection. From Lemma \ref{l:main}, or rather from the comment following the said lemma, we get that $h_\gamma(\alpha(\BR))\leq1$. Lemma \ref{l:neighborhood} implies that $d_{\hyp}(\alpha(s), \pi(\alpha(s))\leq 1+\epsilon$ for all $s$. Now, if we have $s,t\in\BR$ with $d_{\hyp}(\pi(\alpha(s)),\pi(\alpha(t)))=1$ we get $d_{\hyp}(\alpha(s),\alpha(t))\le 3+2\epsilon<4$. We thus get from (d) that:
\begin{itemize}
\item[(e)] If $\alpha:\BR\to\tilde\RO$ is as in the statement of the proposition, if $\gamma$ is the $\rho_{\hyp}$-geodesic at bounded distance of $\alpha$, and if $\pi:\tilde\RO\to\gamma$ is the nearest point projection then we have 
$$\vert s-t\vert\le (C+1)\cdot D\stackrel{\text{def}}=A_1$$
for all $s,t\in\BR$ with $d_{\hyp}(\pi(\alpha(s)),\pi(\alpha(t)))\leq 1$.
\end{itemize}
Now, if we have $s<t\in\BR$ arbitrary let $s_0=s$ and, as long as $s_k<t$, define iteratively $s_{k+1}>s_k$ as follows: 
\begin{itemize}
\item If $d_{\hyp}(\pi(\alpha(s_k)),\pi(\alpha(t)))<1$ then set $s_{k+1}=t$.
\item Else $s_{k+1}=\max\{s'\in[s,t]\text{ with }d_{\hyp}(\pi(\alpha(s_k)),\pi(\alpha(s')))=1\}$.
\end{itemize}
In this way we get a sequence 
$$s=s_0<s_1<s_2<\ldots<s_{N-1}<s_N=t$$
where $N$ satisfies 
$$d_{\hyp}(\pi(\alpha(s)),\pi(\alpha(t)))\ge N-1.$$
On the other hand we get from (e) that $\vert s_k-s_{k+1}\vert\le A_1$. This means that 
$$N\ge \frac{\vert s-t\vert}{A_1}.$$
Taking all of this together we have that for any $\alpha:\BR\to\hat\Sigma$ as in the statement of the Proposition we will have
\begin{equation}\label{eq-tired of this2}
d_{\hyp}(\alpha(s),\alpha(t))\ge d_{\hyp}(\pi(\alpha(s)),\pi(\alpha(t)))\ge\frac{\vert s-t\vert}{A_1}-1
\end{equation}
for all $s,t\in\BR$. 

The claim follows then from \eqref{eq-tired of this1} and \eqref{eq-tired of this2} with $A=\max\{A_0,A_1, 1\}$.
\end{proof}

%\vspace{-1cm}
 
\bibliographystyle{plain}
\bibliography{ref-orb}
\end{document}